\newtheorem{theorem}{Theorem}[section]
\newtheorem{lemma}[theorem]{Lemma}
\newtheorem{corollary}[theorem]{Corollary}
\theoremstyle{definition}
\newtheorem{definition}[theorem]{Definition}
\newtheorem{example}[theorem]{Example}
\theoremstyle{remark}
\newtheorem{remark}[theorem]{Remark}
\numberwithin{equation}{section}
\newcommand{\Z}{{\mathbb{Z}}}
\newcommand{\N}{{\mathbb{N}}}
\newcommand{\algt}{\mathfrak{t}}
\begin{document}

\title[GKM graph locally modeled by $T^{n}\times S^{1}$-action on $T^{*}\mathbb{C}^{n}$]{GKM graph locally modeled by $T^{n}\times S^{1}$-action on $T^{*}\mathbb{C}^{n}$ and its graph equivariant cohomology}

\author{Shintar\^o KUROKI}
\address{Okayama University of Science, 
1-1 Ridai-cho Kita-ku Okayama-shi Okayama 700-0005, JAPAN}
\email{kuroki@xmath.ous.ac.jp}
\thanks{The 1st author was supported by JSPS KAKENHI Grant Number 17K14196 and 21K03262.}

\author{Vikraman UMA}
\address{Department of Mathematics, Indian Institute of Technology, Madras, Chennai 600036, INDIA
}
\email{vuma@iitm.ac.in}

\subjclass{Primary 55N91; Secondary 05C65, 57S12}

\keywords{Equivariant cohomology, GKM-graph, toric hyperK${\rm\ddot{a}}$hler manifold, cotangent bundle of a toric manifold, toric topology}

\begin{abstract}
We introduce a class of labeled graphs (with legs) which contains two classes of GKM graphs of $4n$-dimensional manifolds with $T^{n}\times S^{1}$-actions, i.e., GKM graphs of  
the toric hyperK${\rm\ddot{a}}$hler manifolds and of the cotangent bundles of toric manifolds.
Under some conditions, 
the graph equivariant cohomology ring of such a labeled graph is computed.
We also give a module basis of the graph equivariant cohomology by using a shelling structure of such a labeled graph, and study their multiplicative structure.
\end{abstract}

\maketitle

\section{Introduction}

A {\it GKM graph} is a labeled graph defined by the special but wide class of manifolds with torus actions, called {\it GKM manifolds}.
From the torus action on a GKM manifold, a GKM graph is defined by its zero and one dimensional orbits together with the labels on edges defined by the tangential representations around fixed points. 
Goresky-Kottwicz-MacPherson in \cite{GKM} show that if a GKM manifold satisfies a certain condition, called {\it equivariant formality}, then its equivariant cohomology is isomorphic to an algebra defined from its GKM graph.
We call this algebra a {\it graph equivariant cohomology} in this paper.  
Motivated by the work of Goresky-Kottwicz-MacPherson, Guillemin-Zara in \cite{GZ} introduce the abstract GKM graph without considering any GKM manifolds, and they translate some geometric properties of GKM manifolds into combinatorial ones of GKM graphs. 
After the works of Guillemin-Zara,  
a GKM graph can be regarded as a combinatorial approximation of space with torus action, and  
it has been studied by some mathematicians, e.g. see \cite{MMP, GHZ, GSZ, FIM, FY, Ku19, DKS}.
In this paper, we introduce a certain class of GKM graphs with legs
and attempt to unify two slightly different classes of manifolds from the GKM theoretical point of view, i.e., 
{\it toric hyperK\"ahler manifolds} and {\it cotangent bundles of toric manifolds}, 
where a {\it leg} is a half-line whose boundary corresponds to the initial vertex. 
We briefly recall toric hyperK\"ahler manifolds and introduce the motivation of the present paper. 
We shall then state our
  main results and outline the organization of this paper.

A {\it toric hyperK\"ahler variety} is defined by the
hyperK${\rm\ddot{a}}$hler quotient of a torus action on the cotangent
bundle $T^{*}\mathbb{C}^{m}$.  This space is introduced by Goto and
Bielwasky-Dancer in \cite{G, BD} as the hyperK${\rm\ddot{a}}$hler
analogue of the symplectic toric manifolds.  The non-singular toric
hyperK${\rm\ddot{a}}$hler varieties are $4n$-dimensional non-compact
manifolds with $T^{n}$-action.  They are completely determined by some
class of hyperplane arrangements in $\mathbb{R}^{n}$ (see \cite{BD})
like symplectic toric manifolds are completely determined by Delzant
polytopes in $\mathbb{R}^{n}$ (see \cite{D}).  The equivariant
topology and geometry of toric hyperK${\rm\ddot{a}}$hler manifolds are
studied by some mathematicians, e.g. \cite{Ko99, Ko00, Ko03, HP, P,
  Ku11}.  In particular, Harada-Proudfoot show that every toric
hyperK${\rm\ddot{a}}$hler manifold admits the residual $S^{1}$-action
and the equivariant cohomology of a toric hyperK${\rm\ddot{a}}$hler
manifold with $T^{n}\times S^{1}$-action is determined by the
half-space arrangements in $\mathbb{R}^{n}$.  They also show that the
toric hyperK${\rm\ddot{a}}$hler manifolds with
$T^{n}\times S^{1}$-actions satisfy the {\it GKM condition}, i.e., its
zero and one dimensional orbits have the structure of a graph.  Note
that the $T^{n}$-action on a toric hyperK${\rm\ddot{a}}$hler manifold
does not satisfy the GKM condition.  Therefore, we can define the
labeled graph (with legs) from toric hyperK${\rm\ddot{a}}$hler
manifolds with $T^{n}\times S^{1}$-actions.  The GKM graph of a toric
hyperK${\rm\ddot{a}}$hler manifold is obtained from the
one-dimensional intersections of hyperplanes like the GKM graph of a
symplectic toric manifold is obtained from the one-skeleton of a
moment-polytope.  By definition, the tangential representations of
$T^{n}\times S^{1}$-actions on the fixed points are isomorphic (up to
automorphism on $T^{n}\times S^{1}$) to the standard $T^{n}$-action on
$T^{*}\mathbb{C}^{n}$ together with the scalar multiplication of
$S^{1}$ on the fiber, i.e.,
\begin{align}
\label{standard-action}
(t_{1},\ldots, t_{n}, r)\cdot (z_{1},\ldots, z_{n}, w_{1},\ldots, w_{n})\mapsto 
(t_{1}z_{1},\ldots, t_{n}z_{n}, rt_{1}^{-1}w_{1},\ldots, rt_{n}^{-1}w_{n}),
\end{align}
where 
\begin{align*}
(t_{1},\ldots, t_{n})\in T^{n},\quad r\in S^{1}, \quad 
(z_{1},\ldots, z_{n},w_{1},\ldots, w_{n})\in T^{*}\mathbb{C}^{n}(\simeq \mathbb{C}^{n}\times \mathbb{C}^{n}).
\end{align*}
We call the action defined by \eqref{standard-action} is the {\it standard $T^{n}\times S^{1}$-action on $T^{*}\mathbb{C}^{n}$}.

On the other hand, the cotangent bundle $T^{*}M$ of a $2n$-dimensional
toric manifold $M$ also has the $T^{n}\times S^{1}$-action.
More precisely, because $T$ acts on $M$ smoothly, each element $t\in T$ induces the diffeomorphism $t:M\to M$, say $t:p\mapsto t\cdot p$ for $p\in M$. By taking its differential $dt:TM \to TM$, we have the lift of the $T$-action on the tangent bundle $TM$. Note that $(dt)_{p}:T_{p}M \to T_{t\cdot p}M$ is the linear isomorphism. Because the cotangent bundle $T^{*}M$ is defined by the bundle over $M$ whose fibres are $T^{*}_{p}M:={\rm Hom}(T_{p}M, \mathbb{R})$. Therefore, for an element $f\in T^{*}_{p}M$, we can define the $t\in T$ action by $t\cdot f:=f\circ (dt)_{p}^{-1}:T_{t\cdot p}M\to \mathbb{R}$. Together with the scalar multiplication by $S^{1}$ on each fibre $T_{p}^{*}M$, we have the $T^{n}\times S^{1}$-action on $T^{*}M$.
It follows from the definition that this also
satisfies the GKM conditions and the tangential representation around
every fixed point in $T^{*}M$ is isomorphic to the standard
$T^{n}\times S^{1}$-action on $T^{*}\mathbb{C}^{n}$.

Note that $T^{*}M$ of a toric manifold $M$ is not a toric
hyperK${\rm\ddot{a}}$hler manifold except in the case when $M$ is a product of some projective spaces, see \cite{BD}.
So the cotangent bundles of toric manifolds and the toric hyperK${\rm\ddot{a}}$hler manifolds are different classes of manifolds.
However, 
it is known that their equivariant cohomologies are quite similar.
The equivariant cohomology $H_{T^{n}}^{*}(M)$ of a symplectic toric manifold $M$ with $T^{n}$-action is isomorphic to the Stanley-Reisner ring of the moment-polytope (see e.g. \cite[Lemma 7.4.34]{BP} for more general class of manifolds with $T^{n}$-actions).
Because there is an equivariant deformation retract from $T^{*}M$ to $M$, we see that 
 $H_{T^{n}}^{*}(T^{*}M)$ is also isomorphic to the Stanley-Reisner ring of a polytope. 
On the other hand, by Konno's theorem \cite{Ko99}, the equivariant cohomology of a toric hyperK${\rm\ddot{a}}$hler manifold with $T^{n}$-action is isomorphic to the Stanley-Reisner ring of hyperplane arrangements. 
Therefore, these distinct classes of manifolds have similar equivariant cohomology ring structures.
So it may be natural to ask whether we can unify these classes of manifolds. 
One answer is that there exists an embedding from a symplectic toric manifold $M$ to a toric hyperK${\rm\ddot{a}}$hler manifold (see \cite{BD, HP}).
In this paper, we answer this question from a different direction, namely, we unify the equivariant cohomologies of these classes by using GKM graphs.

To achieve that, we introduce the class of GKM graphs whose axial functions around vertices are modeled by the standard $T^{n}\times S^{1}$-action on $T^{*}\mathbb{C}^{n}$, 
called a {\it GKM graph locally modeled by $T^{n}\times S^{1}$-action on $T^{*}\mathbb{C}^{n}$} or {\it $T^{*}\mathbb{C}^{n}$-modeled GKM graph} for short in Definition~\ref{def_modeled_graph}. 
This GKM graph behaves like the hyperplane arrangements but does not always come from the hyperplane arrangements, see Section~\ref{sect:3}.
We study the graph equivariant cohomology of $T^{*}\mathbb{C}^{n}$-modeled GKM graphs. 
The first main theorem of this paper is as follows (the technical notions will be introduced in Section~\ref{sect:3} and Section~\ref{sect:4}):
\begin{theorem}[Theorem~\ref{main-theorem1}]
Let $\mathcal{G}$ be a $2n$-valent $T^{*}\mathbb{C}^{n}$-modeled GKM graph and $\mathbf{L}=\{L_{1},\ \cdots,\ L_{m}\}$ be the set of all
hyperplanes in $\mathcal{G}$.
Assume that $\mathcal{G}$ satisfies the following two assumptions:
\begin{enumerate}
\item For each $L \in \mathbf{L}$, there exist the unique pair of the halfspace $H$ and its opposite side $\overline{H}$ such that $H \cap \overline{H}=L$;
\item For every subset $\mathbf{L}'\subset \mathbf{L}$, its
  intersection $\displaystyle\bigcap_{L\in\mathbf{L}'} L$ is empty or connected.
\end{enumerate}
Then the following ring isomorphism holds: 
\begin{align*}
H^{*}(\mathcal{G})\simeq \Z[\mathcal{G}].
\end{align*}
\end{theorem}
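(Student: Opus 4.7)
The strategy I would adopt mirrors Konno's proof for toric hyperK\"ahler manifolds and the analogous proofs for GKM manifolds: construct an explicit ring homomorphism $\Phi\colon \Z[\mathcal{G}]\to H^{*}(\mathcal{G})$ sending each generator indexed by a halfspace $H$ to a canonical ``Thom-like'' class $\tau_{H}\in H^{2}(\mathcal{G})$, and then show $\Phi$ is bijective using the shelling structure advertised in the abstract. The class $\tau_{H}$ should be defined on vertices $p$ of $\mathcal{G}$ by the piecewise-linear rule $\tau_{H}(p)=0$ if $p\in H$ and $\tau_{H}(p)=\alpha_{H,p}$ otherwise, where $\alpha_{H,p}$ is the axial-function label of the unique edge or leg at $p$ that crosses into $\overline{H}$. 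Assumption (1) guarantees this edge/leg is well-defined, so $\tau_{H}$ is unambiguous.

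The first step is to verify that $\tau_{H}$ lies in $H^{*}(\mathcal{G})$, i.e.\ that it satisfies the GKM congruence across each edge; this is a local check comparing $\tau_{H}(p)$ and $\tau_{H}(q)$ along an edge $pq$, and it reduces to whether $pq$ crosses the hyperplane $L=H\cap\overline{H}$ or not, which in turn follows from the local $T^{*}\C^{n}$-model. Next I would verify that the defining relations of $\Z[\mathcal{G}]$ are sent to zero: the ``linear relations'' coming from the axial function, and the Stanley--Reisner style ``multiplicative relations'' asserting $\prod_{H\in\mathbf{L}'}\tau_{H}=0$ whenever $\bigcap_{H\in\mathbf{L}'}H=\emptyset$. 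The connectivity hypothesis (2) is exactly what is needed here: it guarantees that the support structure of monomials in the $\tau_{H}$ is determined by intersection patterns, so no extra relations appear and no relations are missed. This yields a well-defined ring map $\Phi$.

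To prove $\Phi$ is an isomorphism, I would use the shelling structure on $\mathcal{G}$ to produce a $H^{*}(BT^{n}\times BS^{1})$-module basis on both sides simultaneously. Order the vertices $p_{1},p_{2},\ldots$ compatibly with the shelling and, at each $p_{i}$, form the monomial $\mu_{i}=\prod_{H} \tau_{H}$ ranging over those halfspaces $H$ whose boundary hyperplane separates $p_{i}$ from the ``previously shelled'' region in the prescribed way. A standard localization argument shows the $\mu_{i}$ are linearly independent in $H^{*}(\mathcal{G})$ (they vanish on earlier vertices and are non-zero at $p_{i}$), and the same monomials span $\Z[\mathcal{G}]$ as a module once the Stanley--Reisner relations are used to eliminate excess factors. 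Matching these two bases via $\Phi$ will give the isomorphism.

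The main obstacle, I expect, is controlling global combinatorics to get surjectivity of $\Phi$: while localization cleanly handles injectivity, surjectivity requires that an arbitrary $H^{*}(BT^{n}\times BS^{1})$-valued function on vertices satisfying the GKM congruence can actually be built from products of the $\tau_{H}$. The legs, which distinguish $T^{*}\C^{n}$-modeled graphs from closed GKM graphs, introduce unbounded directions, and here the shelling must be set up so that each vertex is ``separated from infinity'' by a well-controlled collection of halfspaces. Assumptions (1) and (2) should make this tractable, as (1) gives an unambiguous notion of ``which side'' and (2) ensures the inductive separation step at each shelling stage is governed by a connected face.
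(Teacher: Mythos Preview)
Your high-level plan---define $\Phi$ via Thom-type classes $\tau_H$, verify the relations, then prove bijectivity---matches the paper's strategy, but there are two concrete problems.

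First, your definition of $\tau_H$ is wrong and would not yield an element of $H^*(\mathcal{G})$. A halfspace $H$ is \emph{not} a codimension-one face: it is nearly all of $\Gamma$, and its vertices fall into three types (exterior, boundary, interior). The paper's Thom class (Definition~\ref{Thom class}) is
\[
\tau_H(p)=\begin{cases}0 & p\notin \mathcal{V}^H\\ \alpha(n^H(p)) & |\mathcal{E}_p^H|=2n-1\\ x & |\mathcal{E}_p^H|=2n,\end{cases}
\]
so interior vertices of $H$ receive the residual weight $x$, not $0$. Your two-case formula (``$0$ on $H$, normal weight off $H$'') is the shape of the Thom class of a \emph{hyperplane} $L$ in the $x$-forgetful graph $\widetilde{\mathcal{G}}$, not of a halfspace in $\mathcal{G}$. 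This matters because the linear relations in $\Z[\mathcal{G}]$ are $H_i+\overline{H_i}=X$, which under $\Phi$ become $\tau_{H_i}+\tau_{\overline{H_i}}=\chi$; that identity needs the interior value $x$ to hold. The relations you describe (``linear relations from the axial function'') are not the relations defining $\Z[\mathcal{G}]$.

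Second, the paper does not use shelling to prove Theorem~\ref{main-theorem1}. Instead it factors through the $x$-forgetful graph $\widetilde{\mathcal{G}}$: one first proves $H^*(\widetilde{\mathcal{G}})\simeq\Z[\widetilde{\mathcal{G}}]$ (Theorem~\ref{main-theorem1-1}) by a direct localization-and-vanishing argument in the style of Maeda--Masuda--Panov, and then lifts this to $\mathcal{G}$ via a commutative diagram relating $\pi:\Z[X,H_1,\ldots,H_m]\to H^*(\mathcal{G})$, the forgetful map $\widetilde{F}:H^*(\mathcal{G})\to H^*(\widetilde{\mathcal{G}})$, and $\pi'$. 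Surjectivity (Lemma~\ref{surj_pi}) is a degree induction using $\mathrm{Ker}\,\widetilde{F}=\langle\chi\rangle$; injectivity (Lemma~\ref{key_for_inj}) is a comparison of kernels, where assumption~(2) enters through Lemma~\ref{lem_for_key2} and Lemma~\ref{key2} to show that an empty intersection of hyperplanes can be realized as an empty intersection of halfspaces. The shelling appears only later (Section~\ref{sect:7}), to exhibit an $H^*(BT^n)$-module basis of $H^*(\widetilde{\mathcal{G}})$, and is an additional hypothesis there rather than a tool for the ring isomorphism. Your shelling-based bijectivity argument could perhaps be made to work for $\widetilde{\mathcal{G}}$, but for $\mathcal{G}$ itself you would still need to handle the extra generator $X$ and the relation $H+\overline{H}=X$, which is precisely what the $x$-forgetful reduction is designed to absorb.
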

To prove this, we introduce an {\it $x$-forgetful graph
  $\mathcal{\widetilde{G}}$} from a $T^{*}\mathbb{C}^{n}$-modeled GKM
graph $\mathcal{G}$ in Section~\ref{sect:5}.  An $x$-forgetful graph
$\mathcal{\widetilde{G}}$ is a labeled graph but not a GKM graph, and
it may be regarded as the combinatorial counterpart of the
$T^{n}$-actions on toric hyperK${\rm\ddot{a}}$hler manifolds and
cotangent bundles over toric manifolds.  We define its graph
equivariant cohomology $H^{*}(\widetilde{\mathcal{G}})$, and prove its
ring structure in Theorem~\ref{main-theorem1-1}.  In
Section~\ref{sect:6}, we give a proof of Theorem~\ref{main-theorem1}
by using Theorem~\ref{main-theorem1-1}.  In Section~\ref{sect:7}, we
also study the $H^{*}(BT^{n})$-module structure of
$H^{*}(\widetilde{\mathcal{G}})$.  As the second main result of this
paper, in Theorem~\ref{module-generators}, we exhibit an 
$H^{*}(BT^{n})$-module basis of $H^{*}(\widetilde{\mathcal{G}})$ by
using the shellablility of a simplicial complex $\Delta_{\mathbf{L}}$
associated to $\mathbf{L}$. Dividing $H^{*}(\widetilde{\mathcal{G}})$
by $H^{>0}(BT^{n})$, we also introduce
$H^{*}_{ord}(\widetilde{\mathcal{G}})$, which corresponds to the
ordinary cohomology of the usual equivariant cohomology.  Then, we
show that the $H^{*}(BT^{n})$-module basis of
$H^{*}(\widetilde{\mathcal{G}})$ induces a $\mathbb{Z}$-module basis
for $H^{*}_{ord}(\widetilde{\mathcal{G}})$. Finally, in the case when
$\widetilde{\mathcal{G}}$ corresponds to the line arrangements in
$\mathbb{R}^{2}$ (which corresponds geometrically to the
$8$-dimensional toric hyperK${\rm\ddot{a}}$hler manifolds), we
describe the structure constants of
$H^{*}_{ord}(\widetilde{\mathcal{G}})$ with respect to this basis.


\section{GKM graph locally modeled by $T^{n}\times S^{1}$-action on $T^{*}\mathbb{C}^{n}$}

This section aims to define a 
GKM graph with legs and its graph equivariant cohomology.
In particular, we introduce {\it 
GKM graphs locally modeled by $T^{n}\times S^{1}$-action on $T^{*}\mathbb{C}^{n}$} as the special class of  GKM graphs with legs.


\subsection{Notations}
\label{sect:2.1}
We first prepare some notations.
In this paper $\Gamma$ is a connected graph which possibly has legs,
where a \textit{leg} means an outgoing half-line from one vertex (see the left graph in the Figure \ref{1st_example}).
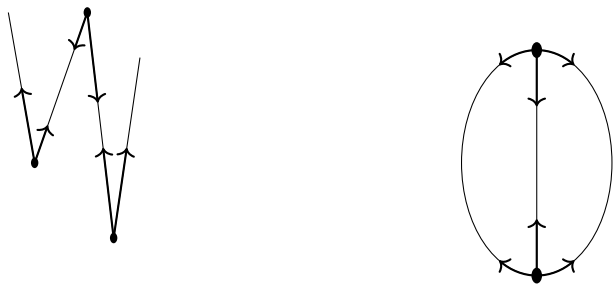
\begin{figure}[h]
\begin{tikzpicture}
\begin{scope}[xscale=0.7, yscale=1.0]
\fill (0,2)  coordinate (p) circle (2pt); 
\fill (-1,0)  coordinate (q) circle (2pt); 
\fill (0.5,-1)  coordinate (r) circle (2pt); 
\fill (-1.5,2) coordinate (s);
\fill (1,1.4) coordinate (t);

\draw (p)--(q);
\draw (p)--(r);
\draw (q)--(s);
\draw (r)--(t);

\draw [->, thick] (p)--(-0.25,1.5);
\draw[->, thick] (q)--(-0.75,0.5);

\draw [->, thick] (p)--(0.2,0.8);
\draw [->, thick] (r)--(0.3,0.2);

\draw[->, thick] (q)--(-1.25,1);

\draw[->, thick] (r)--(0.75,0.2);


\end{scope}

\begin{scope}[xshift=170, xscale=1.0, yscale=1.5]
\fill (0,-1)  coordinate (p) circle (2pt); 
\fill (0,1)  coordinate (q) circle (2pt); 

\draw (0,-1)--(0,1);
\draw (0,0) circle [radius=1];

\draw [->, thick] (0,-1) arc (270:240:1);
\draw [->, thick] (0,-1)--(0,-0.5);
\draw [->, thick] (0,-1) arc (270:300:1);

\draw [->, thick] (0,1) arc (90:120:1);
\draw [->, thick] (0,1)--(0,0.5);
\draw [->, thick] (0,1) arc (90:60:1);
\end{scope}
\end{tikzpicture}
\caption{These are examples of regular graphs with legs and
  orientations.  The left $2$-valent graph has two legs, on the other
  hand the right $3$-valent graph has no legs. Note that all edges
  have two orientations and all legs have only one orientation.}
\label{1st_example}
\end{figure}

We define a graph with legs more precisely. 
Let $\mathcal{V}$ be a set of vertices, $E$ be a set of edges  and $Leg$ be a set of legs in $\Gamma$, 
and $\mathcal{E}=E\cup Leg$. 
The graph $\Gamma$ is denoted by 
\begin{eqnarray*}
\Gamma=(\mathcal{V},\ \mathcal{E}).
\end{eqnarray*}
In this paper, we assume that $\mathcal{V}$ and $\mathcal{E}$ are finite sets. 
We also assume that $\Gamma$ is an oriented graph.
For $\epsilon\in E$, 
we denote by $i(\epsilon)$ and $t(\epsilon)$ the initial vertex and the terminal vertex of $\epsilon$, respectively.
We denote the opposite directed edge of $\epsilon$ as $\overline{\epsilon}$, i.e., 
$i(\overline{\epsilon})=t(\epsilon)$ and $t(\overline{\epsilon})=i(\epsilon)$.
For $\ell\in Leg$, there is no terminal vertex but there exists an initial vertex $i(\ell)$.
Note that the leg in $\Gamma$ can be characterized by the element $\epsilon$ in $\mathcal{E}$ such that there is no $\overline{\epsilon}$.
For a vertex $p\in \mathcal{V}$, we put the set of all outgoing edges and legs from $p\in \mathcal{V}$ by
\begin{eqnarray*}
\mathcal{E}_{p}=\{\epsilon \in \mathcal{E}\ |\ i(\epsilon)=p\}.
\end{eqnarray*}
Assume that $|\mathcal{E}_{p}|=m$ for all $p\in \mathcal{V}$,
where the symbol $|X|$ represents the cardinality of the finite set $X$. 
We call such a graph a {\it (regular) $m$-valent graph}, see Figure~\ref{1st_example}. 

Let $\Gamma=(\mathcal{V},\mathcal{E})$ be a graph with legs.
We denote a subgraph of $\Gamma$ by $G=(\mathcal{V}^{G},\ \mathcal{E}^{G})$, that is, $G$ satisfies
$\mathcal{V}^{G}\subset \mathcal{V}$ and $\mathcal{E}^{G}\subset \mathcal{E}$.
We use the following symbols.
\begin{itemize}
\item $\mathcal{E}^{G}_{p}$: the set of all outgoing edges and legs in $\mathcal{E}^{G}$ from $p\in \mathcal{V}^{G}$.
\item $E^{G}\subset \mathcal{E}^{G}$ (resp.~$E^{G}_{p}\subset \mathcal{E}^{G}_{p}$): the set of all edges  (resp.~out going from $p$) in $G$, i.e., if $\epsilon\in E^{G}$, then the both $i(\epsilon),t(\epsilon)\in \mathcal{V}^{G}$.
\item $Leg^{G}\subset \mathcal{E}^{G}$ (resp.~$(Leg)_{p}^{G}\subset \mathcal{E}^{G}_{p}$): the set of all legs  (resp.~out going from $p$) in $G$.
\end{itemize}
Note the following remark about legs in $G$. 
\begin{remark}
Because $\Gamma$ is an oriented graph, 
we may consider the subgraph $G=(\mathcal{V}^{G},\ \mathcal{E}^{G})$ of $\Gamma=(\mathcal{V},\mathcal{E})$ such that there exists a leg $\epsilon\in Leg^{G}$ in $G$ which is an edge $\epsilon\in \mathcal{E}$ in $\Gamma$.
In other words, $t(\epsilon)\not\in \mathcal{V}^{G}$ but $t(\epsilon)\in \mathcal{V}$; or 
$\overline{\epsilon}\not\in \mathcal{E}^{G}$ but $\overline{\epsilon}\in \mathcal{E}$. 
\end{remark}


\subsection{GKM graph with legs and its graph equivariant cohomology}
\label{sect:2.2}
In this section, 
we shall define a \textit{GKM graph} ({\it (possibly) with legs}) and its \textit{graph equivariant cohomology}.

Let $\Gamma=(\mathcal{V},\ \mathcal{E})$ be an $m$-valent graph.
We first prepare the following notations.
Let 
$\nabla=\{\nabla_{\epsilon}\ |\ \epsilon\in E\}$ be a collection of bijective maps
\begin{eqnarray*}
\nabla_{\epsilon}:\mathcal{E}_{i(\epsilon)}\to \mathcal{E}_{t(\epsilon)}
\end{eqnarray*}
for all edges $\epsilon\in E$.
A {\it connection} on $\Gamma$ is a set $\nabla=\{\nabla_{\epsilon}|\ \epsilon\in E\}$ which satisfies the following two conditions:
\begin{itemize}
\item $\nabla_{\bar{\epsilon}}=\nabla_{\epsilon}^{-1}$;
\item $\nabla_{\epsilon}(\epsilon)=\bar{\epsilon}$.
\end{itemize}
We can easily check that an $m$-valent graph $\Gamma$ admits different
$((m-1)!)^{g}$ connections, where $g$ is the number of (unoriented)
edges $E$.  

Let $T^{n}$ be an $n$-dimensional torus.  In particular,
we often denote a $1$-dimensional torus by $S^{1}$.  If we do not
emphasis the dimension of $T^{n}$, then we denote it by $T$.  Let
$\algt$ be a Lie algebra of $T$, $\algt_{\Z}$ be the lattice of
$\algt$ and $\algt^{*}$ (resp.\ $\algt_{\Z}^{*}$) be the dual of
$\algt$ (resp.\ $\algt_{\Z}$).  The symbol ${\rm Hom}(T,S^{1})$
represents a set of all homomorphisms from the torus $T$ to $S^{1}$.
It is well-known that ${\rm Hom}(T^{n},S^{1})\simeq \mathbb{Z}^{n}$.
Moreover, it may be regarded as $\algt^{*}_{\Z}$ and
$H^{1}(T)\simeq H^{2}(BT)$, where $BT$ is the classifying space of
$T$.  In this paper, if we omit the coefficient of the cohomology,
then it means the cohomology with integer coefficients.  Therefore, we
have the identification
\begin{align*}
{\rm Hom}(T,S^{1})\simeq \algt^{*}_{\Z}\simeq H^{2}(BT).
\end{align*}
Define an \textit{axial function} by the function 
\begin{eqnarray*}
\alpha:\mathcal{E}\longrightarrow H^{2}(BT)
\end{eqnarray*}
such that it satisfies the following three conditions:
\begin{itemize}
\item $\alpha(\bar{\epsilon})=\pm \alpha(\epsilon)$ for all edges $\epsilon\in E$;
\item
  $\alpha(\mathcal{E}_{p})=\{\alpha(\epsilon)\ |\
  \epsilon\in\mathcal{E}_{p}\}$ are \textit{pairwise linearly
    independent} for all $p\in \mathcal{V}$, that is, for every two
  distinct elements $\epsilon_{1},\ \epsilon_{2}\in \mathcal{E}_{p}$,
  $\alpha(\epsilon_{1}),\ \alpha(\epsilon_{2})$ are linearly
  independent in $H^{2}(BT)$;
\item there is a connection $\nabla$ which satisfies the following 
\textit{congruence relation} for all edges $\epsilon\in E$:
\begin{align*}
\alpha(\epsilon')-\alpha(\nabla_{\epsilon}(\epsilon'))\equiv 0 ({\rm mod}\ \alpha(\epsilon))
\end{align*}
for all $\epsilon' \in \mathcal{E}_{i(\epsilon)}$.
\end{itemize}


\begin{definition}[GKM graph with legs]
Let $\mathcal{G}=(\Gamma,\ \alpha,\ \nabla)$ be a collection of an $m$-valent graph  $\Gamma=(\mathcal{V},\ \mathcal{E})$,
where  
the map  
\begin{eqnarray*}
\alpha:\mathcal{E}\longrightarrow H^{2}(BT^{n}),
\end{eqnarray*}
is an axial function ($n\le m$), and $\nabla$ is a connection on
$\Gamma$.  We call $\mathcal{G}=(\Gamma,\ \alpha,\ \nabla)$ a
\textit{GKM graph (with legs)}.
\end{definition}

\begin{remark}
\label{uniqueness}
Suppose that $\mathcal{E}_{p}$ satisfies the $3$-linearly independent
condition for all $p\in \mathcal{V}$, i.e., for every distinct three
elements $\epsilon_{1},\epsilon_{2},\epsilon_{3}\in \mathcal{E}_{p}$
the axial function
$\alpha(\epsilon_{1}),\alpha(\epsilon_{2}),\alpha(\epsilon_{3})$ are
linearly independent.  Then, by the similar proof for the cases of GKM
graphs without legs in \cite{GZ}, if there exists a connection
$\nabla$, then the connection $\nabla$ is unique.  In particular, if
the GKM graph $\mathcal{G}$ satisfies the $3$-linearly independent
condition, then for any two edges (or legs) $\epsilon, \epsilon'$ in $E_{p}$ we can
determine the $2$-valent GKM subgraph which contains $\epsilon,\ \epsilon'$.
Hence, we often omit the connection $\nabla$ in the GKM graph $\mathcal{G}$.
Namely, we often denote the GKM graph by $\mathcal{G}=(\Gamma,\alpha)$ if the connection is obviously determined by the context.
\end{remark}

Due to the theory of toric hyperK{$\ddot{\rm a}$}hler varieties (see \cite{BD, HP, Ko99}), 
the tangential representation on each fixed point is isomorphic to the $T^{n}$-action on $T^{*}\mathbb{C}^{n}$ ($\simeq \mathbb{H}^{n}$, i.e., the $n$-dimensional quaternionic space) which is defined by the strandard $T^{n}$-action on $\mathbb{C}^{n}$, i.e.,
\begin{align*}
(t_{1},\ldots, t_{n})\cdot (z_{1},\ldots, z_{n}, w_{1},\ldots, w_{n}):=
(t_{1}z_{1},\ldots, t_{n}z_{n}, t_{1}^{-1}w_{1},\ldots, t_{n}^{-1}w_{n}),
\end{align*}
where $(t_{1},\ldots, t_{n})\in T^{n}$, $z=(z_{1},\ldots, z_{n})\in\mathbb{C}^{n}$ and $(w_{1},\ldots, w_{n})\in T^{*}_{z}(\mathbb{C}^{n})$.
On the other hand, Harada-Proudfoot \cite{HP} found that 
there exists the residual $S^{1}$-action on the toric hyperK{$\ddot{\rm a}$}hler varieties and this action fits into the GKM theory. 
In this case, the tangential representation on each fixed point may be regarded as the $T^{n}\times S^{1}$-action on $T^{*}\mathbb{C}^{n}$, i.e., 
\begin{align*}
(t_{1},\ldots, t_{n},r)\cdot (z_{1},\ldots, z_{n}, w_{1},\ldots, w_{n}):=
(t_{1}z_{1},\ldots, t_{n}z_{n}, rt_{1}^{-1}w_{1},\ldots, rt_{n}^{-1}w_{n}),
\end{align*}
where $r\in S^{1}$.
Therefore, the toric hyperK{$\ddot{\rm a}$}hler manifolds with
$T^{n}\times S^{1}$-actions induce the GKM graphs with legs whose
axial functions around every
vertex are isomorphic to 
\begin{align*}
\{e_{1}^{*},\ldots, e_{n}^{*}, -e_{1}^{*}+x,\ldots, -e_{n}^{*}+x\},
\end{align*} 
where $e_{1}^{*},\ldots, e_{n}^{*}$ is a generator of a rank $n$ subspace in $H^{2}(BT^{n}\times BS^{1})\simeq \mathbb{Z}^{n}\oplus \mathbb{Z}$ and $x$ is a generator of $H^{2}(BS^{1})\simeq \mathbb{Z}$.
By defining this abstractly, we introduce the following notion:


\begin{definition}[GKM graph locally modeled by $T^{n}\times S^{1}$-action on $T^{*}\mathbb{C}^{n}$]
\label{def_modeled_graph}
Let $\mathcal{G}=(\Gamma,\ \alpha,\ \nabla)$ be a 
$2n$-valent GKM graph with legs with an axial function 
\begin{align*}
\alpha:\mathcal{E}\longrightarrow H^{2}(BT^{n}\times BS^{1})\simeq \algt^{*}_{\Z}\oplus \Z x,
\end{align*}
where $x$ is a generator of the dual of the Lie algebra of $S^{1}$.
We call $\mathcal{G}=(\Gamma,\ \alpha,\ \nabla)$ a \textit{GKM graph modeled by the $T^{n}\times S^{1}$-action on $T^{*}\mathbb{C}^{n}$} (or simply a {\it $T^{*}\mathbb{C}^{n}$-modeled GKM graph}) if it satisfies the following conditions for all $p\in\mathcal{V}$:
\begin{enumerate}
\item We can divide 
$\mathcal{E}_{p}$ into $\{\epsilon_{1}^{+},\ \cdots,\ \epsilon_{n}^{+},\ \epsilon_{1}^{-},\ \cdots,\ \epsilon_{n}^{-}\}$
such that 
\begin{align*}
\alpha(\epsilon_{j}^{+})+\alpha(\epsilon_{j}^{-})=x
\end{align*}
for all $j=1,\ \cdots,\ n$;
\item The set $\{\alpha(\epsilon_{j}^{+}),\ x\ |\ j=1,\cdots ,n\}$
spans $\algt^{*}_{\Z}\oplus \Z x$, i.e., 
\begin{align*}
\langle \alpha(\epsilon_{1}^{+}),\ \cdots,\ \alpha(\epsilon_{n}^{+}),\ x\rangle=\algt^{*}_{\Z}\oplus \Z x.
\end{align*}
\end{enumerate}
We call $\{ \epsilon_{j}^{+},\ \epsilon_{j}^{-}\}$ such that 
$\alpha(\epsilon_{j}^{+})+\alpha(\epsilon_{j}^{-})=x$ a \textit{$1$-dimensional pair} in $\mathcal{E}_{p}$.
Furthermore, we call an element $x$ a {\it residual basis}.
\end{definition}

Figure~\ref{Figure2} shows some examples of $T^{*}\mathbb{C}^{n}$-modeled GKM graphs.
\begin{figure}[h]
\begin{tikzpicture}
\begin{scope}[xscale=1.0, yscale=1.0]
\fill (-1,1) coordinate (r) circle (2pt);
\fill (2,-2) coordinate (q) circle (2pt); 
\fill (-1,-2) coordinate (p) circle (2pt); 

\draw (-2,2)--(3,-3);
\draw (-1,2)--(-1,-3);
\draw (-2,-2)--(3,-2);

\draw[->, thick] (p)--(-0.5,-2);
\node[below] at (-0.2,-2) {$e_{2}^{*}$};
\draw[->, thick] (p)--(-1,-1.5);
\node[left] at (-1,-1.2) {$e_{1}^{*}$};
\draw[->, thick] (p)--(-1.5,-2);
\node[above] at (-2,-2) {$x-e_{2}^{*}$};
\draw[->, thick] (p)--(-1,-2.5);
\node[left] at (-1,-2.8) {$x-e_{1}^{*}$};

\draw[->, thick] (q)--(1.5,-2);
\node[below] at (1.2,-2) {$-e_{2}^{*}$};
\draw[->, thick] (q)--(2.5,-2);
\node[above] at (2.5,-2) {$e_{2}^{*}+x$};
\draw[->, thick] (q)--(2.5,-2.5);
\node[below] at (2.5,-2.5) {$e_{2}^{*}-e_{1}^{*}+x$};
\draw[->, thick] (q)--(1.5,-1.5);
\node[left] at (1.5,-1.5) {$e_{1}^{*}-e_{2}^{*}$};

\draw[->, thick] (r)--(-1,0.5);
\node[left] at (-1,0.3) {$-e_{1}^{*}$};
\draw[->, thick] (r)--(-1,1.5);
\node[right] at (-1,1.5) {$e_{1}^{*}+x$};
\draw[->, thick] (r)--(-1.5,1.5);
\node[left] at (-1.6,1.5) {$e_{1}^{*}-e_{2}^{*}+x$};
\draw[->, thick] (r)--(-0.5,0.5);
\node[right] at (-0.5,0.5) {$e_{2}^{*}-e_{1}^{*}$};

\end{scope}

\begin{scope}[xshift=200, xscale=1.0, yscale=1.0]
\fill (2,0) coordinate (p) circle (2pt);
\fill (2,2) coordinate (q) circle (2pt);
\fill (0,2) coordinate (r) circle (2pt);
\fill (-2,2) coordinate (s) circle (2pt);
\fill (-2,0) coordinate (t) circle (2pt);
\fill (-2,-2) coordinate (u) circle (2pt);
\fill (0,-2) coordinate (v) circle (2pt);
\fill (2,-2) coordinate (w) circle (2pt);

\draw (-3,2)--(3,2);
\draw (-3,0)--(-1,0);
\draw (1,0)--(3,0);
\draw (-3,-2)--(3,-2);

\draw (-2,3)--(-2,-3);
\draw (0,3)--(0,1);
\draw (0,-3)--(0,-1);
\draw (2,3)--(2,-3);

\draw[->, thick] (p)--(2.5,0);
\node[below] at (2.9,0) {$e_{2}^{*}+2x$};
\draw[->, thick] (p)--(1.5,0);
\node[above] at (1.1,0) {$-e_{2}^{*}-x$};
\draw[->, thick] (p)--(2,0.5);
\node[right] at (2,0.7) {$e_{1}^{*}$};
\draw[->, thick] (p)--(2,-0.5);
\node[left] at (2,-0.7) {$x-e_{1}^{*}$};

\draw[->, thick] (q)--(2.5,2);
\node[below] at (2.9,2) {$e_{2}^{*}+2x$};

\draw[->, thick] (r)--(0,2.5);
\node[left] at (0,2.7) {$e_{1}^{*}+x$};
\draw[->, thick] (r)--(0,1.5);
\node[left] at (0,1.3) {$-e_{1}^{*}$};


\draw[->, thick] (t)--(-1.5,0);
\node[below] at (-1.1,0) {$e_{2}^{*}$};
\draw[->, thick] (t)--(-2.5,0);
\node[above] at (-2.9,0) {$x-e_{2}^{*}$};
\draw[->, thick] (t)--(-2,0.5);
\node[right] at (-2,0.5) {$e_{1}^{*}$};
\draw[->, thick] (t)--(-2,-0.5);
\node[left] at (-2,-0.7) {$x-e_{1}^{*}$};

\draw[->, thick] (v)--(0,-1.5);
\node[left] at (0,-1.3) {$e_{1}^{*}-x$};
\draw[->, thick] (v)--(0,-2.5);
\node[left] at (0,-2.7) {$2x-e_{1}^{*}$};

\draw[->, thick] (w)--(2.5,-2);
\node[below] at (2.9,-2) {$e_{2}^{*}+2x$};

\end{scope}
\end{tikzpicture}
\caption{$T^{*}\mathbb{C}^{2}$-modeled GKM graphs, where $\langle {\rm e_{1}^{*},\ e_{2}^{*}} \rangle\simeq (\algt^{2})^{*}_{\Z}$. In the left and the right figures, we assume $\alpha(\epsilon)=-\alpha(\overline{\epsilon})$ and omit some axial functions which are automatically determined by the definition.}
\label{Figure2}
\end{figure}
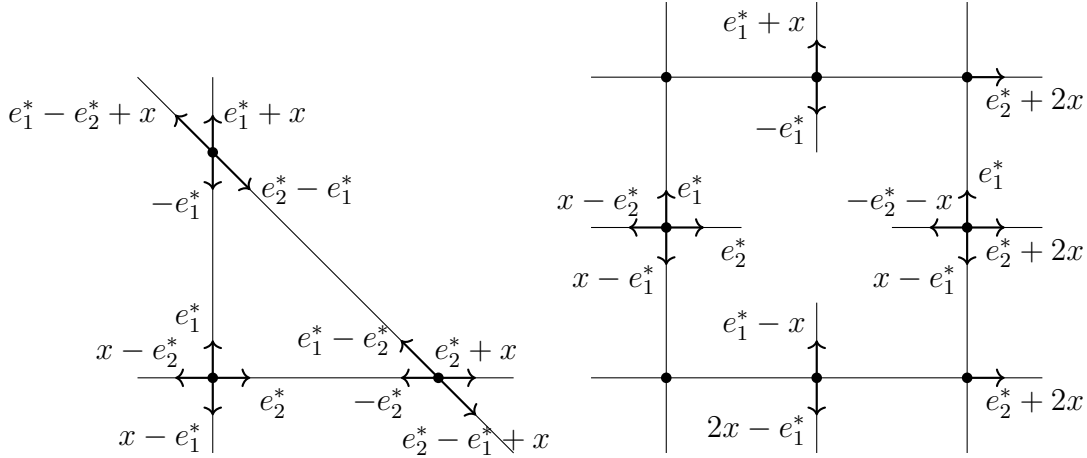

\begin{remark}
  Note that the axial function on $T^{*}\mathbb{C}^{n}$-modeled GKM
  graphs satisfies the $3$-linearly independent condition for all
  vertices.  Therefore, the connection on a
  $T^{*}\mathbb{C}^{n}$-modeled GKM graph $\mathcal{G}$ is uniquely
  determined and we may denote it by \begin{align*}
    \mathcal{G}=(\Gamma,\alpha).
\end{align*}
\end{remark}

We also have the following lemma for the $1$-dimensional pair $\{ \epsilon^{+},\ \epsilon^{-}\}$.
\begin{lemma}
\label{lem_about_pair}
Let $\{\epsilon^{+},\ \epsilon^{-}\}$ be a $1$-dimensional pair in $\mathcal{E}_{i(\epsilon')}$ for some edge $\epsilon'\in E$.
Then $\{\nabla_{\epsilon'}(\epsilon^{+}),\ \nabla_{\epsilon'}(\epsilon^{-})\}$ is also a $1$-dimensional pair in $\mathcal{E}_{t(\epsilon')}$.
\end{lemma}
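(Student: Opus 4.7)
The approach is to combine the congruence relation satisfied by $\nabla_{\epsilon'}$ with the basis condition in Definition~\ref{def_modeled_graph}~(2) at the terminal vertex $q:=t(\epsilon')$, reducing everything modulo $\alpha(\epsilon')$ so that the $1$-dimensional pair structure at $q$ becomes rigid.

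First, apply the congruence relation of $\nabla_{\epsilon'}$ to both $\epsilon^+$ and $\epsilon^-$ and add the two resulting congruences. Using the defining identity $\alpha(\epsilon^+)+\alpha(\epsilon^-)=x$ of a $1$-dimensional pair, one obtains the key congruence
\[
\alpha(\nabla_{\epsilon'}(\epsilon^+))+\alpha(\nabla_{\epsilon'}(\epsilon^-))\equiv x\pmod{\alpha(\epsilon')}.
\]
Next, fix the pair decomposition $\mathcal{E}_q=\{\eta_1^{\pm},\ldots,\eta_n^{\pm}\}$ and set $a_i:=\alpha(\eta_i^+)$, so that by Definition~\ref{def_modeled_graph}~(2) the set $\{a_1,\ldots,a_n,x\}$ is a $\Z$-basis of $\algt^{*}_{\Z}\oplus\Z x$. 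Since $\overline{\epsilon'}\in\mathcal{E}_q$, after relabeling the pairs (and, if necessary, swapping the roles of $+$ and $-$ in the first pair, which is an invertible change of basis because $x$ is fixed), we may assume $\overline{\epsilon'}=\eta_1^+$, whence $\alpha(\epsilon')=\pm a_1$.

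Now reduce modulo $a_1$: the classes $\bar{a}_2,\ldots,\bar{a}_n,\bar{x}$ remain $\Z$-linearly independent, and the residues of the $2n$ axial functions at $q$ are
\[
\overline{\alpha(\eta_i^+)}=\bar{a}_i,\ \overline{\alpha(\eta_i^-)}=\bar{x}-\bar{a}_i\ (i\ge 2),\qquad\overline{\alpha(\eta_1^+)}=0,\ \overline{\alpha(\eta_1^-)}=\bar{x}.
\]
Inspecting these $2n$ residues shows that for each $\eta\in\mathcal{E}_q$ there is a \emph{unique} $\eta^{\sharp}\in\mathcal{E}_q$ with $\alpha(\eta)+\alpha(\eta^{\sharp})\equiv\bar{x}$, and this $\eta^{\sharp}$ is precisely the $1$-dimensional partner of $\eta$. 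Applying this with $\eta:=\nabla_{\epsilon'}(\epsilon^+)$ and using the key congruence forces $\nabla_{\epsilon'}(\epsilon^-)$ to be the partner of $\nabla_{\epsilon'}(\epsilon^+)$, which is the assertion of the lemma.

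The main obstacle is the uniqueness of $\eta^{\sharp}$: one must rule out the possibility that two axial functions coming from two \emph{different} $1$-dimensional pairs also sum to $\bar{x}$ in the quotient. This reduces to a short finite check through the four sign combinations $(\sigma,\tau)\in\{\pm\}^{2}$ of cross-pair indices $i\neq j$, and each case is excluded by the $\Z$-linear independence of $\bar{a}_2,\ldots,\bar{a}_n,\bar{x}$. This is exactly the step where the basis property of Definition~\ref{def_modeled_graph}~(2) is indispensable; without it, stray cross-pair combinations could satisfy the same congruence and the rigidity would fail.
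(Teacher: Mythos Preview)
Your proof is correct and follows essentially the same strategy as the paper: both add the two congruences for $\epsilon^{\pm}$ to obtain $\alpha(\nabla_{\epsilon'}(\epsilon^{+}))+\alpha(\nabla_{\epsilon'}(\epsilon^{-}))\equiv x\pmod{\alpha(\epsilon')}$ and then invoke the basis condition of Definition~\ref{def_modeled_graph}(2) to force the pair structure at $t(\epsilon')$. The only cosmetic difference is that the paper argues by contradiction via the $3$-linear independence of $\{\nabla_{\epsilon'}(\epsilon^{-}),\epsilon,\overline{\epsilon'}\}$, whereas you carry out an equivalent direct uniqueness check in the quotient by $a_{1}$.
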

\begin{proof}
We first divide the edges and legs $\mathcal{E}_{i(\epsilon')}$ by the $1$-dimensional pairs as follows:
\begin{align*}
\mathcal{E}_{i(\epsilon')}=\{\epsilon^{+}_{1},\ \epsilon^{-}_{1}\}\cup \cdots \cup \{\epsilon^{+}_{n},\ \epsilon^{-}_{n}\}
\end{align*}
Because the axial function $\alpha$ satisfies the congruence relation, there are integers $k^{+}_{j}$ and $k^{-}_{j}$, $j=1,\ldots, n$, such that 
\begin{align*}
& \alpha(\nabla_{\epsilon'}(\epsilon_{j}^{+}))-\alpha(\epsilon_{j}^{+})=k^{+}_{j}\alpha(\epsilon'), \\
& \alpha(\nabla_{\epsilon'}(\epsilon_{j}^{-}))-\alpha(\epsilon_{j}^{-})=k^{-}_{j}\alpha(\epsilon').
\end{align*}
Since $\{\epsilon_{j}^{+},\ \epsilon_{j}^{-}\}$ is a $1$-dimensional pair, we also have 
\begin{align}
\label{equation-1-dim_pair}
& (\alpha(\nabla_{\epsilon'}(\epsilon_{j}^{+}))-\alpha(\epsilon_{j}^{+}))+
(\alpha(\nabla_{\epsilon'}(\epsilon_{j}^{-}))-\alpha(\epsilon_{j}^{-})) \\
= &
\alpha(\nabla_{\epsilon'}(\epsilon_{j}^{+}))+\alpha(\nabla_{\epsilon'}(\epsilon_{j}^{-}))-x  \nonumber \\
= & (k_{j}^{+}+k_{j}^{-})\alpha(\epsilon'). \nonumber
\end{align}
In order to show the statement, it is enough to show that 
the following equation holds:
\begin{align*}
k_{j}^{-}=-k_{j}^{+}.
\end{align*}
Suppose on the contrary that $k_{j}^{-}\not=-k_{j}^{+}$.
Then, by \eqref{equation-1-dim_pair},
we have 
\begin{align*}
\alpha(\nabla_{\epsilon'}(\epsilon_{j}^{-}))= -\alpha(\nabla_{\epsilon'}(\epsilon_{j}^{+}))+x+(k_{j}^{+}+k_{j}^{-})\alpha(\epsilon')\not=-\alpha(\nabla_{\epsilon'}(\epsilon_{j}^{+}))+x.
\end{align*}
This implies that $\{\nabla_{\epsilon'}(\epsilon_{j}^{+}),\ \nabla_{\epsilon'}(\epsilon_{j}^{-})\}$ is not a $1$-dimensional pair.
Therefore, there is another element 
$\epsilon(\not=\nabla_{\epsilon'}(\epsilon_{j}^{-}))$ in $\mathcal{E}_{t(\epsilon')}$
such that 
\begin{align*}
\alpha(\epsilon)=-\alpha(\nabla_{\epsilon'}(\epsilon_{j}^{+}))+x.
\end{align*}
This gives that  
\begin{align*}
\alpha(\nabla_{\epsilon'}(\epsilon_{j}^{-}))&=\alpha(\epsilon)+(k_{j}^{+}+k_{j}^{-})\alpha(\epsilon').
\end{align*}
However, since
$\{\nabla_{\epsilon'}(\epsilon_{j}^{-}),\epsilon,\overline{\epsilon'}\}\subset
\mathcal{E}_{t(\epsilon')}$, this is a contradiction to the fact that
$T^{*}\mathbb{C}^{n}$-modeled GKM graph is always $3$-linearly
independent.  Hence, we must have $k_{j}^{-}=-k_{j}^{+}$.  This establishes
the statement.
\end{proof}

Finally, in this section, we also define the notion of {\it graph equivariant cohomology}.
Let $\mathcal{G}=(\Gamma,\ \alpha,\ \nabla)$ be a GKM graph (with 
legs) such that $\alpha:\mathcal{E}\to H^{2}(BT)$.
With the definition similar to that of the GKM graph without legs, the graph equivariant cohomology is defined as follows.
\begin{definition}[graph equivariant cohomology]\label{gec}
The following ring is called a {\it graph equivariant cohomology} of  $\mathcal{G}$:
\begin{eqnarray*}
H^{*}(\mathcal{G})=\{\varphi:\mathcal{V}\to H^{*}(BT)\ |\ \varphi(i(\epsilon))-\varphi(t(\epsilon))\equiv 0\ ({\rm mod}\ \alpha(\epsilon))\},
\end{eqnarray*}
We call the relation $\varphi(i(\epsilon))-\varphi(t(\epsilon))\equiv 0\ ({\rm mod}\ \alpha(\epsilon))$ a {\it congruence relation} of $\varphi$ on an edge $\epsilon\in E$.  
\end{definition}

\section{Some notions of $T^{*}\mathbb{C}^{n}$-modeled GKM graphs}
\label{sect:3}

Let $\mathcal{G}=(\Gamma,\alpha,\nabla)$ be a $2n$-valent
$T^{*}\mathbb{C}^{n}$-modeled GKM graph such that
$\alpha:\mathcal{E}\to H^{2}(BT^{n})\oplus \mathbb{Z}x$, where $x$ is
a residual basis.  The goal of this paper is to compute
$H^{*}(\mathcal{G})$ for a certain class of
$T^{*}\mathbb{C}^{n}$-modeled GKM graphs.  To do that we prepare some
notions and properties of $T^{*}\mathbb{C}^{n}$-modeled GKM graphs.
We first introduce the following notion.

We define a {\it GKM subgraph} $\mathcal{H}=(H,\alpha^{H},\nabla^{H})$ by a $k$-valent subgraph $H$ of $\Gamma$ such that the axial function is defined by 
\begin{align*}
\alpha^{H}:=\alpha|_{\mathcal{E}^{H}},
\end{align*}
and the connection is defined by 
\begin{align*}
\nabla^{H}:=\{\nabla_{\epsilon}|_{\mathcal{E}^{H}}\ |\ \epsilon\in E^{H}\}.
\end{align*}
We also denote $\nabla^{H}_{\epsilon}:=\nabla_{\epsilon}|_{\mathcal{E}^{H}}$ for an edge $\epsilon\in E^{H}$.

\subsection{Hyperplane}
\label{sect:3.1}

In this section, we introduce the notion of a hyperplane in $\mathcal{G}$ and show a key property Lemma~\ref{key-properties-hyperplane} which will be used to show the main theorem of this paper.

\begin{definition}[hyperplane]
\label{hyperplane}
Let $\mathcal{G}$ be a $T^{*}\mathbb{C}^{n}$-modeled GKM graph.
Assume that a GKM subgraph $\mathcal{L}=(L,\alpha^{L},\nabla^{L})$ of
$\mathcal{G}$ is a $(2n-2)$-valent subgraph of $\Gamma$ and it is a
$T^{*}\mathbb{C}^{n-1}$-modeled GKM graph with the residual basis $x$,
i.e., there are $1$-dimensional pairs
$\{\epsilon_{j}^{+},\epsilon_{j}^{-}\}\subset \mathcal{E}_{p}^{L}$,
$j=1,\ldots, n-1$, on each vertex such that
\begin{align*}
\alpha^{L}(\epsilon_{j}^{+})+\alpha^{L}(\epsilon_{j}^{-})=x.
\end{align*}
Such a GKM subgraph $\mathcal{L}$ is said to be a {\it hyperplane} if $L$ is a maximal $(2n-2)$-valent connected subgraph in $\Gamma$, i.e., if $L'$ is a $(2n-2)$-valent connected subgraph in $\Gamma$ such that $L\subset L'$ then $L=L'$.
\end{definition}

\begin{example}
The following two figures show an example and a non-example of hyperplanes. 
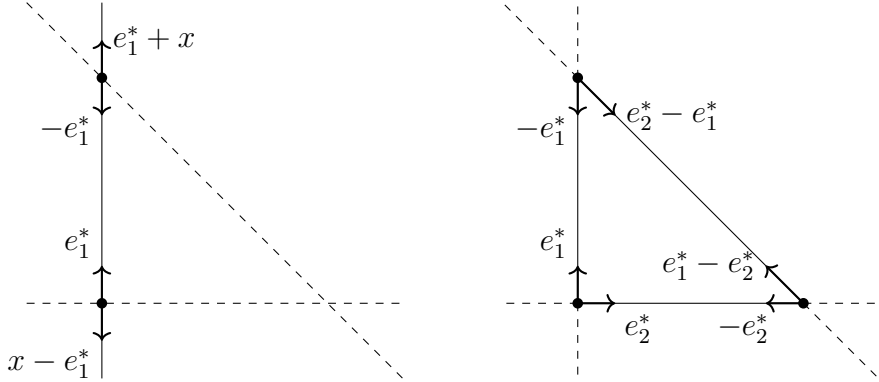
\begin{figure}[h]
\begin{tikzpicture}
\begin{scope}[xscale=1.0, yscale=1.0]
\fill (-1,1) coordinate (r) circle (2pt);
\fill (-1,-2) coordinate (p) circle (2pt); 

\draw[dashed] (-2,2)--(3,-3);
\draw (-1,2)--(-1,-3);
\draw[dashed] (-2,-2)--(3,-2);

\draw[->, thick] (p)--(-1,-1.5);
\node[left] at (-1,-1.2) {$e_{1}^{*}$};
\draw[->, thick] (p)--(-1,-2.5);
\node[left] at (-1,-2.8) {$x-e_{1}^{*}$};

\draw[->, thick] (r)--(-1,0.5);
\node[left] at (-1,0.3) {$-e_{1}^{*}$};
\draw[->, thick] (r)--(-1,1.5);
\node[right] at (-1,1.5) {$e_{1}^{*}+x$};
\end{scope}

\begin{scope}[xshift=180, xscale=1.0, yscale=1.0]
\fill (-1,1) coordinate (r) circle (2pt);
\fill (2,-2) coordinate (q) circle (2pt); 
\fill (-1,-2) coordinate (p) circle (2pt); 

\draw[dashed] (p)--(-2,-2);
\draw[dashed] (p)--(-1,-3);
\draw (p)--(q);
\draw[dashed] (q)--(3,-2);
\draw[dashed] (q)--(3,-3);
\draw (q)--(r);
\draw[dashed] (r)--(-2,2);
\draw[dashed] (r)--(-1,2);
\draw (r)--(p);

\draw[->, thick] (p)--(-0.5,-2);
\node[below] at (-0.2,-2) {$e_{2}^{*}$};
\draw[->, thick] (p)--(-1,-1.5);
\node[left] at (-1,-1.2) {$e_{1}^{*}$};

\draw[->, thick] (q)--(1.5,-2);
\node[below] at (1.2,-2) {$-e_{2}^{*}$};
\draw[->, thick] (q)--(1.5,-1.5);
\node[left] at (1.5,-1.5) {$e_{1}^{*}-e_{2}^{*}$};

\draw[->, thick] (r)--(-1,0.5);
\node[left] at (-1,0.3) {$-e_{1}^{*}$};
\draw[->, thick] (r)--(-0.5,0.5);
\node[right] at (-0.5,0.5) {$e_{2}^{*}-e_{1}^{*}$};
\end{scope}
\end{tikzpicture}
\caption{The left figure shows a hyperplane of the left GKM graph in
  Figure~\ref{Figure2}. The right figure is a $2$-valent GKM subgraph
  of the left GKM graph in Figure~\ref{Figure2} but it is not a
  hyperplane.}
\label{Figure3}
\end{figure}

\end{example}

For the hyperplane $\mathcal{L}$ we have the following property (this
may be regarded as the analogue of the property of facets in a torus
graph \cite{MMP}).
\begin{lemma}
\label{key-properties-hyperplane}
Let $\mathcal{G}=(\Gamma,\ \alpha,\ \nabla)$ be a $2n$-valent $T^{*}\mathbb{C}^{n}$-modeled GKM graph.
Take a vertex $p\in \mathcal{V}$.
Then, for every $1$-dimensional pair $\{\epsilon^{+},\epsilon^{-}\}\subset \mathcal{E}_{p}$, there exists a unique hyperplane $\mathcal{L}=(L,\alpha^{L},\nabla^{L})$
such that $\mathcal{E}_{p}^{L}=\mathcal{E}_{p}\setminus \{\epsilon^{+},\epsilon^{-}\}$.
\end{lemma}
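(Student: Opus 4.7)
The plan is to characterize $L$ by a linear-algebraic condition on axial functions, rather than by iterating the connection $\nabla$ and then having to argue that the result is path-independent.

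First I introduce the sublattice $V:=\langle\alpha(\epsilon):\epsilon\in\mathcal{E}_{p}\setminus\{\epsilon^{+},\epsilon^{-}\}\rangle\subset\algt^{*}_{\Z}\oplus\Z x$. Writing $\mathcal{E}_{p}=\{\epsilon_{i}^{+},\epsilon_{i}^{-}\}_{i=1}^{n}$ with $\{\epsilon^{+},\epsilon^{-}\}=\{\epsilon_{n}^{+},\epsilon_{n}^{-}\}$, the $1$-dimensional pair relations $\alpha(\epsilon_{i}^{+})+\alpha(\epsilon_{i}^{-})=x$ together with condition (2) of Definition~\ref{def_modeled_graph} identify $V$ as a rank-$n$ sublattice with $\Z$-basis $\{\alpha(\epsilon_{1}^{+}),\ldots,\alpha(\epsilon_{n-1}^{+}),x\}$, a direct summand of $\algt^{*}_{\Z}\oplus\Z x$, and with $\alpha(\epsilon^{\pm})\notin V$.

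Next I would define $L$ to be the connected component containing $p$ of the subgraph of $\Gamma$ whose edges and legs $\eta$ satisfy $\alpha(\eta)\in V$, and then check the axioms of Definition~\ref{hyperplane}. The two key inputs are: (a) for any edge $\eta$ with $\alpha(\eta)\in V$, the congruence relation forces $\nabla_{\eta}$ to send $\{\epsilon':\alpha(\epsilon')\in V\}$ to itself, so propagation along walks in $L$ preserves the condition ``axial function in $V$''; and (b) at every vertex $q$, since $x\in V$ and each $1$-dimensional pair at $q$ sums to $x$, the set $\mathcal{E}_{q}^{L}:=\{\epsilon\in\mathcal{E}_{q}:\alpha(\epsilon)\in V\}$ has even size $2k_{q}$. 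Propagation from $p$ along any walk in $L$ gives $k_{q}\geq n-1$, while $k_{q}=n$ would force $V$ to contain $\langle\alpha(\mathcal{E}_{q})\rangle=\algt^{*}_{\Z}\oplus\Z x$ of rank $n+1$, contradicting $\mathrm{rank}(V)=n$. Hence $k_{q}=n-1$ at every vertex, giving $L$ its $(2n-2)$-valence.

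With the valence pinned down, the $T^{*}\C^{n-1}$-modeled condition on $\mathcal{L}$ with residual basis $x$ is automatic: the $n-1$ surviving pairs at each vertex still sum to $x$, and the restriction of $\alpha$ takes values in $V\simeq(\algt')^{*}_{\Z}\oplus\Z x$ for a rank-$(n-1)$ lattice $(\algt')^{*}_{\Z}$. For maximality and uniqueness, any $(2n-2)$-valent connected $T^{*}\C^{n-1}$-modeled subgraph $L'$ containing $p$ with $\mathcal{E}_{p}^{L'}=\mathcal{E}_{p}\setminus\{\epsilon^{+},\epsilon^{-}\}$ has span of axial functions at $p$ equal to $V$, so by the congruence relation every axial function on $L'$ lies in $V$, forcing $L'\subset L$; then $L'=L$ follows from the maximality built into Definition~\ref{hyperplane}. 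The main obstacle is the vertex-wise counting $k_{q}=n-1$: this is where the rank condition on $V$, the $1$-dimensional pair structure summing to the residual basis $x$, and the propagation of $\nabla$ must be combined to legitimize the purely linear-algebraic definition of $L$.
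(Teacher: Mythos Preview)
Your argument is correct and rests on the same linear-algebraic invariant the paper uses: the rank-$n$ summand $V$ you introduce is exactly the paper's module $R=\langle\alpha(\epsilon_1^+),\ldots,\alpha(\epsilon_{n-1}^+),x\rangle$, and the key propagation step (your (a)) is the paper's observation that ``$\alpha(\nabla_\epsilon(\mathcal{E}_p^L))$ and $x$ span the same subspace $R$''. The genuine organizational difference is that the paper builds $L$ iteratively as a chain $L_1\subset L_2\subset\cdots$ by repeatedly applying the connection (and invokes Lemma~\ref{lem_about_pair} to carry $1$-dimensional pairs along), then argues uniqueness by a separate iteration; you instead define $L$ in one stroke as the connected component of $p$ in the subgraph cut out by the global condition $\alpha(\eta)\in V$. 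Your packaging makes maximality and uniqueness almost automatic and avoids any path-dependence worry, at the cost of the small extra rank/summand argument you flag as ``the main obstacle''. Both routes are short; yours is cleaner for uniqueness, the paper's is slightly more hands-on about why the propagated object is $(2n-2)$-valent. One minor simplification available to you: once (a) is stated as a \emph{bijection} preserving ``$\alpha\in V$'' (since $\nabla_\eta$ is bijective and $\alpha(\epsilon')\in V\Leftrightarrow\alpha(\nabla_\eta\epsilon')\in V$), you get $k_q=n-1$ directly along the component, so the separate upper bound via the rank of $V$ is not strictly needed.
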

\begin{proof}
  We first prove the existence of the hyperplane which satisfies the
  statement.  Put
  $\mathcal{E}_{p}^{L}=\mathcal{E}_{p}\setminus
  \{\epsilon^{+},\epsilon^{-}\}$.  Then we can write
  $\mathcal{E}_{p}^{L}=\{\epsilon_{1}^{+},\ \cdots,\
  \epsilon_{n-1}^{+},\ \epsilon_{1}^{-},\ \cdots,\
  \epsilon_{n-1}^{-}\}$ the $(n-1)$ $1$-dimensional pairs in
  $\mathcal{E}_{p}$ which are different from
  $\{\epsilon^{+},\epsilon^{-}\}$.
Let   
\begin{align*}
R:=\langle \alpha(\epsilon_{1}^{+}),\ \cdots,\
  \alpha(\epsilon_{n-1}^{+}),\ x \rangle.
\end{align*}
By
  Definition~\ref{def_modeled_graph} (2) we may assume that
  $\langle \alpha(\epsilon_{1}^{+}),\ldots, \alpha(\epsilon_{n-1}^{+})
  \rangle \subset \algt^{*}_{\mathbb{Z}}\oplus \mathbb{Z}x$ is a submodule of rank
  $(n-1)$. Thus we have

\begin{align*}
R\simeq H^{2}(BT^{n-1})\oplus \Z x.
\end{align*}

Take an element $\epsilon\in \mathcal{E}_{p}^{L}$ which becomes an edge in $\Gamma$, i.e., $i(\epsilon)=p$ and there exists $t(\epsilon)\in \mathcal{V}$.
In other words, $\epsilon\in \mathcal{E}^{L}_{p}\cap E_{p}$ (Note that this will be $E_{p}^{L}$ of $L$).
By Lemma \ref{lem_about_pair}, 
the subset $\nabla_{\epsilon}(\mathcal{E}_{p}^{L})$ in $\mathcal{E}_{t(\epsilon)}$ consists of exactly $(n-1)$ $1$-dimensional pairs.
Moreover, 
because $\alpha$ satisfies the congruence relation on the edge $\epsilon \in \mathcal{E}^{L}_{p}\cap E_{p}$,
$\alpha(\nabla_{\epsilon}(\mathcal{E}_{p}^{L}))$ and $x$ span the same subspace $R$ as above.
This property holds for all edges $\epsilon\in \mathcal{E}^{L}_{p}\cap E_{p}$.
Hence, we can define the following $(2n-2)$-valent subgraph 
in $\Gamma=(\mathcal{V},\mathcal{E})$:
\begin{eqnarray*}
L_{1}=(\mathcal{V}^{L_{1}},\mathcal{E}^{L_{1}})
\end{eqnarray*}
such that  
\begin{align*}
& \mathcal{V}^{L_{1}}:=\{p,\ t(\epsilon)\ |\ \epsilon\in \mathcal{E}^{L}_{p}\cap E_{p}\}; \\
& \mathcal{E}^{L_{1}}:=\bigcup_{\epsilon\in \mathcal{E}^{L}_{p}\cap E_{p}}\nabla_{\epsilon}(\mathcal{E}_{p}^{L})\cup\mathcal{E}_{p}^{L}.
\end{align*}
If we restrict $\alpha$ and $\nabla$ onto $L_{1}$, then this becomes a
$(2n-2)$-valent $T^{*}\mathbb{C}^{n-1}$-modeled GKM subgraph, say
$\mathcal{L}_{1}$, in $\mathcal{G}$.  If $L_{1}$ is maximal, i.e., if
there is a $(2n-2)$-valent graph $L'$ such that $L_{1}\subset L'$ then
$L_{1}=L'$, then $\mathcal{L}_{1}$ is a hyperplane.  Assume that
$L_{1}$ is not maximal.  In this case, for every vertex
$q\in \mathcal{V}^{L_{1}}$ and every edge
$\epsilon\in \mathcal{E}_{q}^{L_{1}}\cap E_{q}$, we can apply the similar
method stated as above.  Then we can construct the $(2n-2)$-valent
$T^{*}\mathbb{C}^{n-1}$-modeled GKM subgraph $\mathcal{L}_{2}$ which
contains $\mathcal{L}_{1}$.  If $\mathcal{L}_{2}$ is maximal, then
this is a hyperplane which we want to have.  Otherwise, by repeating
similar arguments, we get the hyperplane $\mathcal{L}$ which contains
$\mathcal{E}_{p}^{L}$.

Suppose that 
there are two hyperplanes $\mathcal{L}=(L,\alpha^{L},\nabla^{L})$ and $\mathcal{L}'=(L',\alpha^{L'},\nabla^{L'})$ such that $\mathcal{E}_{p}^{L}=\mathcal{E}_{p}^{L'}$.
Because $E_{p}^{L}=E_{p}^{L'}$ and two connections are restricted from the connection $\nabla$ of $\mathcal{G}$, we see that the following two subgraphs are the same graph:
\begin{eqnarray*}
\bigcup_{\epsilon\in E_{p}^{L}}\nabla_{\epsilon}(\mathcal{E}_{p}^{L})\cup\mathcal{E}_{p}^{L}=\bigcup_{\epsilon\in E_{p}^{L'}}\nabla_{\epsilon}(\mathcal{E}_{p}^{L'})\cup\mathcal{E}_{p}^{L'}.
\end{eqnarray*} 
By iterating this construction along all edges in $L$ and $L'$, 
finally, we know that $L=L'$.
Therefore, such a hyperplane is unique. 
\end{proof}

\subsection{Pre-halfspace and its Thom class}
\label{sect:3.2}

In this section, we introduce a {\it pre-halfspace} and its {\it Thom class}.

Take a subgraph $H=(\mathcal{V}^{H},\mathcal{E}^{H})$ of $\Gamma$ such that 
$|\mathcal{E}^{H}_{p}|=2n-1$ or $2n$ for all $p\in \mathcal{V}^{H}$. We assume that there always exists a vertex $p\in \mathcal{V}^{H}$ with $|\mathcal{E}_{p}^{H}|=2n-1$.
Moreover, we assume that $H$ is \textit{closed} under the connection $\nabla$ of $\mathcal{G}=(\Gamma,\ \alpha,\ \nabla)$, 
that is, 
\begin{description}
\item[(C1)] $\nabla_{\epsilon}^{H}:=\nabla_{\epsilon}|_{\mathcal{E}_{i(\epsilon)}^{H}}:\mathcal{E}^{H}_{i(\epsilon)}\to \mathcal{E}^{H}_{t(\epsilon)}$ is 
bijective, if $|\mathcal{E}^{H}_{i(\epsilon)}|=|\mathcal{E}^{H}_{t(\epsilon)}|=2n-1$ or $2n$;
\item[(C2)] $\nabla_{\epsilon}^{H}:\mathcal{E}^{H}_{i(\epsilon)}\to \mathcal{E}^{H}_{t(\epsilon)}$ is 
injective, if $|\mathcal{E}^{H}_{i(\epsilon)}|=2n-1<|\mathcal{E}^{H}_{t(\epsilon)}|=2n$.
\end{description}
In addition, we also assume that 
if $|\mathcal{E}^{H}_{i(\epsilon)}|(=2n-1)<|\mathcal{E}^{H}_{t(\epsilon)}|(=2n)$ then $\nabla_{\epsilon}^{H}$ satisfies the following congruence relation
for $\{n^{H}(i(\epsilon))\}=\mathcal{E}^{\Gamma}_{i(\epsilon)}-\mathcal{E}^{H}_{i(\epsilon)}$ 
(we call such an $n^{H}(p)$ a \textit{normal edge} or a \textit{normal leg} of $H$ at $p$):
\begin{eqnarray}
\label{congrel_pre-halfsp}
\alpha(n^{H}(p))-x\equiv 0\ ({\rm mod}\ \alpha(\epsilon)).
\end{eqnarray}
Now we may define the pre-halfspace.

\begin{definition}[pre-halfspace]
\label{pre-halfspace}
Let $\mathcal{H}:=(H,\alpha^{H},\nabla^{H})$ be the triple as above, i.e.,
\begin{itemize}
\item $H=(\mathcal{V}^{H},\mathcal{E}^{H})$ is a subgraph $\Gamma$ such that 
$|\mathcal{E}^{H}_{p}|=2n-1$ or $2n$ for all $p\in \mathcal{V}^{H}$, where there exists a vertex $p\in \mathcal{V}^{H}$ with $|\mathcal{E}^{H}_{p}|=2n-1$;
\item $\alpha^{H}:=\alpha|_{\mathcal{E}^{H}}:\mathcal{E}^{H}\to H^{2}(BT^{n})\oplus\mathbb{Z} x$ is the axial function restricted onto $H$;
\item $\nabla^{H}:=\{\nabla_{\epsilon}^{H}\ |\ \epsilon\in E^{H}\}$ is the restrected connection of $\nabla$ onto $H$ which satisfies the conditions (C1), (C2), \eqref{congrel_pre-halfsp} as above,
\end{itemize}
 then we call $\mathcal{H}$ a {\it pre-halfspace} of $\mathcal{G}=(\Gamma,\ \alpha,\ \nabla)$.
\end{definition}

\begin{example}
\label{example-pre-halfspace}
The following left figure (Figure~\ref{fig-pre-halfspace}) shows
an example of pre-halfspace of the left graph in Figure~\ref{Figure2}.
On the other hand, the following right figure (Figure~\ref{fig-non-pre-halfspace}) is an abstract subgraph of the left graph in Figure~\ref{Figure2}.
However, this is not closed under the connection $\nabla$, because the congruence relation does not hold on the diagonal edge.
Therefore, this is not a pre-halfspace. 
\begin{figure}[htbp]
 \begin{minipage}{0.4\hsize}
\begin{tikzpicture}
\begin{scope}[xscale=0.8, yscale=0.8]
\fill (-1,1) coordinate (r) circle (2pt);
\fill (2,-2) coordinate (q) circle (2pt); 
\fill (-1,-2) coordinate (p) circle (2pt); 

\draw[dashed] (-2,2)--(r);
\draw (-1,1)--(3,-3);
\draw (-1,2)--(-1,-3);
\draw[dashed] (-2,-2)--(q);
\draw (-1,-2)--(3,-2);

\draw[->, thick] (p)--(-0.5,-2);
\node[below] at (-0.2,-2) {$e_{2}^{*}$};
\draw[->, thick] (p)--(-1,-1.5);
\node[left] at (-1,-1.2) {$e_{1}^{*}$};
\draw[->, thick] (p)--(-1,-2.5);
\node[left] at (-1,-2.8) {$x-e_{1}^{*}$};

\draw[->, thick] (q)--(1.5,-2);
\node[below] at (1.2,-2) {$-e_{2}^{*}$};
\draw[->, thick] (q)--(2.5,-2);
\node[above] at (2.8,-2) {$e_{2}^{*}+x$};
\draw[->, thick] (q)--(2.5,-2.5);
\node[right] at (2.5,-2.5) {$e_{2}^{*}-e_{1}^{*}+x$};
\draw[->, thick] (q)--(1.5,-1.5);
\node[left] at (1.5,-1.5) {$e_{1}^{*}-e_{2}^{*}$};

\draw[->, thick] (r)--(-1,0.5);
\node[left] at (-1,0.3) {$-e_{1}^{*}$};
\draw[->, thick] (r)--(-1,1.5);
\node[right] at (-1,1.5) {$e_{1}^{*}+x$};
\draw[->, thick] (r)--(-0.5,0.5);
\node[right] at (-0.5,0.5) {$e_{2}^{*}-e_{1}^{*}$};
\end{scope}
\end{tikzpicture}
\caption{}
\label{fig-pre-halfspace}
 \end{minipage}
 \begin{minipage}{0.4\hsize}
\begin{tikzpicture}
\begin{scope}[xscale=0.8, yscale=0.8]
\fill (-1,1) coordinate (r) circle (2pt);
\fill (2,-2) coordinate (q) circle (2pt); 
\fill (-1,-2) coordinate (p) circle (2pt); 

\draw[dashed] (-2,2)--(r);
\draw (-1,1)--(3,-3);
\draw (-1,2)--(-1,-3);
\draw[dashed] (-2,-2)--(q);
\draw (-1,-2)--(q);
\draw[dashed] (q)--(3,-2);

\draw[->, thick] (p)--(-0.5,-2);
\node[below] at (-0.2,-2) {$e_{2}^{*}$};
\draw[->, thick] (p)--(-1,-1.5);
\node[left] at (-1,-1.2) {$e_{1}^{*}$};
\draw[->, thick] (p)--(-1,-2.5);
\node[left] at (-1,-2.8) {$x-e_{1}^{*}$};

\draw[->, thick] (q)--(1.5,-2);
\node[below] at (1.2,-2) {$-e_{2}^{*}$};
\draw[->, thick] (q)--(2.5,-2.5);
\node[right] at (2.5,-2.5) {$e_{2}^{*}-e_{1}^{*}+x$};
\draw[->, thick] (q)--(1.5,-1.5);
\node[left] at (1.5,-1.5) {$e_{1}^{*}-e_{2}^{*}$};

\draw[->, thick] (r)--(-1,0.5);
\node[left] at (-1,0.3) {$-e_{1}^{*}$};
\draw[->, thick] (r)--(-1,1.5);
\node[right] at (-1,1.5) {$e_{1}^{*}+x$};
\draw[->, thick] (r)--(-0.5,0.5);
\node[right] at (-0.5,0.5) {$e_{2}^{*}-e_{1}^{*}$};
\end{scope}
\end{tikzpicture}
\caption{}
\label{fig-non-pre-halfspace}
 \end{minipage}
\end{figure}

\end{example}

Let $\mathcal{H}=(H,\ \alpha^{H},\ \nabla^{H})$ be a pre-halfspace of a $T^{*}\mathbb{C}^{n}$-modeled GKM graph $\mathcal{G}=(\Gamma,\ \alpha,\ \nabla)$.
We can define the notion of a \textit{Thom class} for the pre-halfspace (also see \cite{MMP}).
\begin{definition}[Thom class]
\label{Thom class}
A {\it Thom class} of $\mathcal{H}$ is defined by the map $\tau_{H}:\mathcal{V}\to H^{2}(BT^{n})\oplus\mathbb{Z}x$ such that 
\begin{eqnarray*}
\tau_{H}(p)=\left\{
\begin{array}{ll}
0 & {\rm if}\ p\not\in \mathcal{V}^{H} \\
x & {\rm if}\ |\mathcal{E}^{H}_{p}|=2n \\
\alpha(n^{H}(p)) & {\rm if}\ |\mathcal{E}^{H}_{p}|=2n-1,
\end{array} \right.
\end{eqnarray*}
\end{definition}
We also call a vertex $p\in \mathcal{V}$ with $\tau_{H}(p)=0$ (resp.\ $\tau_{H}(p)=x$, $\tau_{H}(p)=\alpha(n^{H}(p))$)
an {\it exterior} (resp.\ {\it interior}, {\it boundary}) {\it vertex} of $H$.

For the Thom class $\tau_{H}$ of a pre-halfspace $H$, we have the following lemma.
\begin{lemma}
\label{thom}
The Thom class $\tau_{H}$ of a pre-halfspace $\mathcal{H}$ is an element of $H^{*}(\mathcal{G})$.
\end{lemma}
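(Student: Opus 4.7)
The plan is to verify directly that $\tau_H$ satisfies the congruence relation of Definition~\ref{gec} on every edge $\epsilon \in E$. Setting $p = i(\epsilon)$ and $q = t(\epsilon)$, I would organize the argument by cases according to the pair $(|\mathcal{E}_p^H|, |\mathcal{E}_q^H|)$, where each entry is interpreted as $0$ if the vertex is exterior (not in $\mathcal{V}^H$) and otherwise is $2n-1$ or $2n$. A preliminary observation is that if $p \in \mathcal{V}^H$ is interior, i.e.\ $|\mathcal{E}_p^H| = 2n$, then $\mathcal{E}_p^H = \mathcal{E}_p$, so any edge out of $p$ lies in $\mathcal{E}^H$ and hence $q \in \mathcal{V}^H$; consequently the combination (interior, exterior) cannot occur. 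This already rules out one case.

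The remaining cases split as follows. When both endpoints are exterior or both are interior, $\tau_H(p) - \tau_H(q)$ is identically $0$ and there is nothing to check. When both endpoints are boundary vertices, conditions (C1) of Definition~\ref{pre-halfspace} together with the fact that the normal edge is the unique element of $\mathcal{E}_p \setminus \mathcal{E}_p^H$ force $\nabla_\epsilon(n^H(p)) = n^H(q)$; then the congruence relation of the axial function $\alpha$ on the edge $\epsilon$ applied to $n^H(p)$ yields $\alpha(n^H(p)) - \alpha(n^H(q)) \equiv 0 \pmod{\alpha(\epsilon)}$, which is exactly $\tau_H(p) - \tau_H(q) \equiv 0 \pmod{\alpha(\epsilon)}$.

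The two mixed cases are where the extra assumption \eqref{congrel_pre-halfsp} enters. For (boundary $p$, exterior $q$), the edge $\epsilon$ must be the normal edge $n^H(p)$ (since no edge of $H$ can leave $\mathcal{V}^H$), so $\tau_H(p) - \tau_H(q) = \alpha(\epsilon) - 0 \equiv 0 \pmod{\alpha(\epsilon)}$; the symmetric subcase (exterior $p$, boundary $q$) works identically using $\alpha(\bar\epsilon) = \pm\alpha(\epsilon)$. For (interior $p$, boundary $q$), the inclusion $\nabla^H_{\bar\epsilon}: \mathcal{E}_q^H \hookrightarrow \mathcal{E}_p^H$ is of type (C2), and \eqref{congrel_pre-halfsp} applied to the edge $\bar\epsilon$ gives $\alpha(n^H(q)) - x \equiv 0 \pmod{\alpha(\bar\epsilon)}$, i.e., $\tau_H(p) - \tau_H(q) = x - \alpha(n^H(q)) \equiv 0 \pmod{\alpha(\epsilon)}$. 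The symmetric subcase (boundary $p$, interior $q$) is the same computation read in reverse.

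The only subtlety in this argument is making sure that the case analysis is exhaustive and correctly identifies when $\epsilon$ is or is not a normal edge; once the bookkeeping of normal edges versus edges of $H$ is pinned down, every case reduces either to a trivial vanishing, to the congruence relation of $\alpha$ built into the GKM structure, or to the extra condition \eqref{congrel_pre-halfsp} that was imposed precisely to close this gap. Thus the main (mild) obstacle is exactly the case distinction rather than any calculation, and the proof is otherwise routine.
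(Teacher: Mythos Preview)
Your proof is correct and follows essentially the same approach as the paper's own proof: a direct case-by-case verification of the congruence relation on each edge according to whether the endpoints are exterior, boundary, or interior vertices of $H$. Your exposition is in fact slightly more explicit than the paper's, since you spell out why the combination (interior, exterior) cannot occur and treat the symmetric subcases separately, whereas the paper tacitly reduces to the case where $i(\epsilon)$ is a boundary vertex.
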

\begin{proof}
Take an edge $\epsilon\in E$.
We claim that $\tau_{H}$ satisfies the congruence relation on $\epsilon$, that is, $\tau_{H}(i(\epsilon))-\tau_{H}(t(\epsilon))\equiv 0$ $({\rm mod}\ \alpha(\epsilon))$.

We first assume that $i(\epsilon),\ t(\epsilon)\not\in \mathcal{V}^{H}$ or $|\mathcal{E}_{i(\epsilon)}^{H}|=2n=|\mathcal{E}_{t(\epsilon)}^{H}|$. 
Namely, both of $i(\epsilon)$ and $t(\epsilon)$ are exterior vertices or interior vertices of $H$.
Then, by definition of the Thom class, 
\begin{align*}
\tau_{H}(i(\epsilon))-\tau_{H}(t(\epsilon))=0\equiv 0\ {\rm mod}\ \alpha(\epsilon).
\end{align*}
So the congruence relation holds for these cases. 

We next consider the other cases, i.e., the case when both of $i(\epsilon)$ and $t(\epsilon)$ are boundary vertices or the case when $i(\epsilon)$ is a boundary vertex but $t(\epsilon)$ is an exterior or an interior vertex.
Put $p=i(\epsilon)$.
Assume that $t(\epsilon)(=:q)$ is also a boundary vertex.
Because the pre-halfspace is closed by the connection $\nabla$, we have that $\nabla_{\epsilon}(n^{H}(p))=n^{H}(q)$. 
By the definition of Thom classes, we have that 
\begin{align*}
\tau_{H}(i(\epsilon))-\tau_{H}(t(\epsilon))
&=\alpha(n^{H}(p))-\alpha(n^{H}(q)) \\
&=\alpha(n^{H}(p))-\alpha(\nabla_{\epsilon}(n^{H}(p)))\equiv 0\ {\rm mod}\ \alpha(\epsilon).
\end{align*}
Assume that $t(\epsilon)$ is an exterior vertex, 
Then $\epsilon=n^{H}(p)$ and 
\begin{align*}
\tau_{H}(i(\epsilon))-\tau_{H}(t(\epsilon))
=\alpha(n^{H}(p))-0
\equiv 0\ {\rm mod}\ \alpha(\epsilon)=\alpha(n^{H}(p)).
\end{align*}
Assume that $t(\epsilon)(=:q)$ is an interior vertex. 
Namely, $\tau_{H}(q)=x$. 
In this case, 
we have 
\begin{align*}
\tau_{H}(p)-\tau_{H}(q)=\alpha(n^{H}(p))-x.
\end{align*}
It follows from \eqref{congrel_pre-halfsp} that 
\begin{align*}
\alpha(n^{H}(p))-x\equiv 0\ {\rm mod}\ \alpha(\epsilon).
\end{align*}
So the congruence relation also holds for these cases. 
Consequently, we have $\tau_{H}\in H^{*}(\mathcal{G})$.
\end{proof}

\begin{example}
\label{example-Thom-class}
The following figure (Figure~\ref{fig-Thom-class}) shows an example of the Thom class of the pre-halfspace in Figure~\ref{fig-pre-halfspace}.

\begin{figure}[h]
\begin{tikzpicture}
\begin{scope}[xscale=0.8, yscale=0.8]
\fill (-1,1) coordinate (r) circle (2pt);
\fill (2,-2) coordinate (q) circle (2pt); 
\fill (-1,-2) coordinate (p) circle (2pt); 

\draw[dashed] (-2,2)--(r);
\draw (-1,1)--(3,-3);
\draw (-1,2)--(-1,-3);
\draw[dashed] (-2,-2)--(q);
\draw (-1,-2)--(3,-2);

\draw[->, thick] (p)--(-1.5,-2);
\node[above] at (-2,-2) {$x-e_{2}^{*}$};

\node[below] at (q) {$x$};

\draw[->, thick] (r)--(-1.5,1.5);
\node[left] at (-1.6,1.5) {$e_{1}^{*}-e_{2}^{*}+x$};
\end{scope}
\end{tikzpicture}
\caption{}
\label{fig-Thom-class}
\end{figure}
\end{example}

\subsection{Opposite side of a pre-halfspace}
\label{sect:3.3}

Next we define the \textit{opposite side} of the pre-halfspace.
In order to define it, we need to prove Lemma \ref{existence_of_opposite-side}

In order to prove it, we define a \textit{boundary} of a pre-halfspace.
Let $H$ be a pre-halfspace.
By the definition of a pre-halfspace, there is a vertex $p\in \mathcal{V}^{H}$ such that $|\mathcal{E}_{p}^{H}|=2n-1$.
Then $\mathcal{E}_{p}^{H}$ has $(n-1)$ $1$-dimensional pairs, say $\{\epsilon_{1}^{+},\ \cdots,\ \epsilon_{n-1}^{+},\ \epsilon_{1}^{-},\ \cdots,\ \epsilon_{n-1}^{-}\}$.
Because of Lemma \ref{key-properties-hyperplane}, there exists a  unique hyperplane $\mathcal{L}=(L,\alpha^{L},\nabla^{L})$ in $\mathcal{G}$ such that
\begin{align*}
\alpha^{L}:=\alpha|_{\mathcal{E}^{L}},\quad 
\nabla^{L}:=\{\nabla_{\epsilon}|_{\mathcal{E}_{i(\epsilon)}^{L}}\ |\ \epsilon\in E^{L}\}.
\end{align*}
Moreover, $\mathcal{L}$ satisfies that 
\begin{align*}
p\in \mathcal{V}^{L}
\end{align*}
 and 
\begin{align*}
\mathcal{E}_{p}^{L}=\{\epsilon_{1}^{+},\ \cdots,\ \epsilon_{n-1}^{+},\ \epsilon_{1}^{-},\ \cdots,\ \epsilon_{n-1}^{-}\}.
\end{align*}
We call the union of all such hyperplanes $\mathcal{L}$ a
\textit{boundary} of the pre-halfspace $\mathcal{H}$, and we denote it
by
$\partial \mathcal{H}=(\partial H,\alpha^{\partial H},
\nabla^{\partial H})$.  Note that a boundary of $\mathcal{H}$ may not
be connected (see Figure~\ref{figure of opposite side2}).  To define the opposite side of $\mathcal{H}$, we need the
following lemma:
\begin{lemma}
\label{existence_of_opposite-side}
Let $\mathcal{H}=(H,\alpha^{H},\nabla^{H})$ be a pre-halfspace in
$\mathcal{G}=(\Gamma,\ \alpha,\ \nabla)$
and $x$ be a residual basis.
Then there is a unique pre-halfspace $\mathcal{I}=(I,\alpha^{I},\nabla^{I})$ such that 
\begin{itemize}
\item $H\cup I=\Gamma$;
\item $\tau_{H}+\tau_{I}=\chi \in H^{*}(\mathcal{G})$,
\end{itemize}
where $\chi$ is an element of $H^{*}(\mathcal{G})$ defined by $\chi(p)=x$ for all $p\in \mathcal{V}$.
\end{lemma}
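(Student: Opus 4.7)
The plan is to reverse-engineer $\mathcal{I}$ pointwise from the Thom-class identity $\tau_{I}=\chi-\tau_{H}$, then verify that the resulting candidate satisfies the pre-halfspace axioms; uniqueness will follow automatically from the pointwise determination.

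Since $\tau_{H}(p)\in\{0,\ x,\ \alpha(n^{H}(p))\}$ according as $p$ is exterior, interior, or boundary of $\mathcal{H}$, the identity $\tau_{I}(p)=x-\tau_{H}(p)$ dictates a canonical recipe: the interior vertices of $\mathcal{H}$ must be excluded from $\mathcal{V}^{I}$, the exterior vertices of $\mathcal{H}$ must become interior of $\mathcal{I}$ with $\mathcal{E}^{I}_{p}=\mathcal{E}_{p}$, and every boundary vertex of $\mathcal{H}$ must remain a boundary vertex of $\mathcal{I}$ with normal $n^{I}(p)$ equal to the unique $1$-dimensional pair partner of $n^{H}(p)$, so that $\alpha(n^{I}(p))=x-\alpha(n^{H}(p))$. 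Setting $\mathcal{E}^{I}_{p}:=\mathcal{E}_{p}\setminus\{n^{I}(p)\}$ at boundary vertices, with $\alpha^{I}$ and $\nabla^{I}$ obtained by restriction, produces the candidate $\mathcal{I}$. At every vertex one checks $\mathcal{E}^{H}_{p}\cup\mathcal{E}^{I}_{p}=\mathcal{E}_{p}$ in each of the three types, which gives $H\cup I=\Gamma$, and $\tau_{H}+\tau_{I}=\chi$ holds by construction.

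The substantive work is to verify that $\nabla^{I}$ satisfies the closure conditions (C1), (C2) and the congruence \eqref{congrel_pre-halfsp}. For an edge $\epsilon\in E^{I}$ with $p=i(\epsilon)$ and $q=t(\epsilon)$ both in $\mathcal{V}^{I}$, I would run a case analysis on the types of $p$ and $q$. If both are interior of $\mathcal{I}$, (C1) is inherited directly from the bijection $\nabla_{\epsilon}:\mathcal{E}_{p}\to\mathcal{E}_{q}$. If both are boundary of $\mathcal{I}$ (hence of $\mathcal{H}$), one first establishes $\nabla_{\epsilon}(n^{H}(p))=n^{H}(q)$ (by closure of $\mathcal{H}$ when $\epsilon\in E^{H}$, and from $\nabla_{\epsilon}(\epsilon)=\bar{\epsilon}$ in the remaining sub-case $\epsilon=n^{H}(p)$), and then Lemma~\ref{lem_about_pair}, applied to the $1$-dimensional pair $\{n^{H}(p),\ n^{I}(p)\}$, forces $\nabla_{\epsilon}(n^{I}(p))=n^{I}(q)$; the restriction is therefore a bijection $\mathcal{E}^{I}_{p}\to\mathcal{E}^{I}_{q}$, giving (C1) for $\mathcal{I}$.

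The main obstacle will be the mixed case where $p$ is a boundary vertex of $\mathcal{I}$ but $q$ is an interior vertex of $\mathcal{I}$, i.e., $q\notin\mathcal{V}^{H}$. Here injectivity (C2) for $\nabla_{\epsilon}|_{\mathcal{E}^{I}_{p}}$ is immediate from the bijectivity of $\nabla_{\epsilon}$, but the congruence \eqref{congrel_pre-halfsp} for $\mathcal{I}$ demands
\begin{align*}
\alpha(n^{I}(p))-x\ =\ -\alpha(n^{H}(p))\ \equiv\ 0\ ({\rm mod}\ \alpha(\epsilon)),
\end{align*}
and the key observation is that this is exactly the Thom-class congruence for $\tau_{H}$ on $\epsilon$, namely $\tau_{H}(p)-\tau_{H}(q)=\alpha(n^{H}(p))-0\equiv 0\ ({\rm mod}\ \alpha(\epsilon))$, which holds by Lemma~\ref{thom}. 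Finally, for uniqueness, any competing pre-halfspace $\mathcal{I}'$ satisfying the two listed conditions must satisfy $\tau_{I'}=\chi-\tau_{H}=\tau_{I}$ pointwise; the support of $\tau_{I'}$ pins down $\mathcal{V}^{I'}$, and the pairwise linear independence of $\alpha$ pins down each normal $n^{I'}(p)$ at a boundary vertex, hence all of the remaining data of $\mathcal{I}'$.
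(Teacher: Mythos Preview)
Your proposal is correct and follows essentially the same construction as the paper: both build $I$ as the complement of $H$ glued along the common boundary $\partial H$, verify the pre-halfspace axioms, and deduce $\tau_H+\tau_I=\chi$ from the $1$-dimensional pair relation $\alpha(n^{H}(p))+\alpha(n^{I}(p))=x$. The only notable deviations are cosmetic: for the congruence \eqref{congrel_pre-halfsp} in the boundary-to-interior case the paper directly identifies $\epsilon=n^{H}(p)$ (so that $\alpha(n^{I}(p))-x=-\alpha(\epsilon)$), whereas you invoke Lemma~\ref{thom}; and for uniqueness the paper argues via $\partial I=\partial H$ together with Lemma~\ref{key-properties-hyperplane}, whereas you read off $\mathcal{V}^{I}$ and each $n^{I}(p)$ directly from the pointwise identity $\tau_{I}=\chi-\tau_{H}$.
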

\begin{proof}
Set $H=(\mathcal{V}^{H},\ \mathcal{E}^{H})$ and $\hat{I}=(\mathcal{V}^{\Gamma}-\mathcal{V}^{H},\ \mathcal{E}^{\Gamma}-\mathcal{E}^{H})$.
Define
\begin{eqnarray*}
I=\hat{I}\cup \partial H.
\end{eqnarray*}
Namely,
\begin{align*}
I=(\mathcal{V}^{I},\ \mathcal{E}^{I})=((\mathcal{V}^{\Gamma}-\mathcal{V}^{H})\cup \mathcal{V}^{\partial H},\ (\mathcal{E}^{\Gamma}-\mathcal{E}^{H})\cup \mathcal{E}^{\partial H}).
\end{align*}
Then we can easily see that $H\cup I=\Gamma$ and $H\cap I=\partial H$.

We next prove $I$ is a pre-halfspace.
Take $p\in \mathcal{V}^{I}$.
If $p\in \mathcal{V}^{\hat{I}}=\mathcal{V}^{\Gamma}-\mathcal{V}^{H}$, then $\mathcal{E}_{p}^{H}=\emptyset$; therefore, 
$\mathcal{E}_{p}^{I}=\mathcal{E}_{p}^{\Gamma}-\emptyset=\mathcal{E}_{p}^{\Gamma}$ and $|\mathcal{E}_{p}^{I}|=|\mathcal{E}_{p}^{\Gamma}|=2n$.
If $p\in \mathcal{V}^{\partial H}$, then $\mathcal{E}_{p}^{I}=\mathcal{E}_{p}^{\partial H}\cup \{n_{H}(p)\}$, that is, $|\mathcal{E}_{p}^{I}|=2n-1$.
Here $n_{H}(p)$ is a normal edge (leg) of $H$ on $p$.
Therefore, for an edge $\epsilon\in E^{I}$ and the restricted connection $\nabla_{\epsilon}^{I}:=\nabla_{\epsilon}|_{\mathcal{E}_{i(\epsilon)}^{I}}$, 
it follows from the definition of $\nabla$ on $\Gamma$ that we have   
\begin{itemize}
\item $\nabla_{\epsilon}^{I}:\mathcal{E}_{i(\epsilon)}^{I}\to \mathcal{E}_{t(\epsilon)}^{I}$ is bijective, if $|\mathcal{E}_{i(\epsilon)}^{I}|=|\mathcal{E}_{t(\epsilon)}^{I}|$,  
\item $\nabla_{\epsilon}^{I}:\mathcal{E}_{i(\epsilon)}^{I}\to \mathcal{E}_{t(\epsilon)}^{I}$ is injective, if $|\mathcal{E}_{i(\epsilon)}^{I}|=2n-1<|\mathcal{E}_{t(\epsilon)}^{I}|=2n$.
\end{itemize}
Take an edge $\epsilon\in E^{I}$ in $I$ such that $|\mathcal{E}_{i(\epsilon)}^{I}|=2n-1<|\mathcal{E}_{t(\epsilon)}^{I}|=2n$.
Put 
$i(\epsilon)=p\in \mathcal{V}^{\partial H}$ and $n_{H}(p)=\epsilon^{+}(=\epsilon)$. 
Then the normal edge (leg) of $I$ on $p$ can be taken as $\epsilon^{-}=n_{I}(p)$, 
where $\{\epsilon^{+},\ \epsilon^{-}\}=\mathcal{E}_{p}-\mathcal{E}^{\partial H}_{p}$ is a $1$-dimensional pair in $\mathcal{E}_{p}$.
So we have the following equation:
\begin{eqnarray}
\label{two-side_I_H}
\alpha(n_{I}(p))+\alpha(n_{H}(p))=\alpha(\epsilon^{-})+\alpha(\epsilon^{+})=(-\alpha(\epsilon^{+})+x)+\alpha(\epsilon^{+})=x.
\end{eqnarray}
Therefore, we have that 
\begin{align*}
\alpha(n_{I}(p))-x=&-\alpha(n_{H}(p)) \\
=&-\alpha(\epsilon)\equiv 0\ ({\rm mod}\ \alpha(n_{H}(p))=\alpha(\epsilon)).
\end{align*}
Consequently, we have $\mathcal{I}=(I,\alpha^{I},\nabla^{I})$ is a pre-halfspace such that $H\cup I=\Gamma$, where $\alpha^{I}:=\alpha|_{\mathcal{E}^{I}}$ and $\nabla^{I}:=\{\nabla_{\epsilon}|_{\mathcal{E}^{I}}\ |\ \epsilon\in E^{I}\}$.
Moreover, we have that $\tau_{H}+\tau_{I}=\chi$, because of the above equation and the definition of the Thom class of the pre-halfspace.

We finally claim the uniqueness of $I$.
By two conditions $H\cup I=\Gamma$, $\tau_{H}+\tau_{I}=\chi$ and the definition of the Thom class, we see $\partial I=\partial H$ and 
$I=(\mathcal{V}^{\Gamma}-\mathcal{V}^{H},\ \mathcal{E}^{\Gamma}-\mathcal{E}^{H})\cup \partial I$.
From Lemma \ref{key-properties-hyperplane}, the boundary $\partial H=\partial I$ is uniquely determined (though it may not be connected).
So we know the uniqueness of $I$.
\end{proof}

We call $\mathcal{I}$ in Lemma \ref{existence_of_opposite-side} an {\it opposite side} of $\mathcal{H}$ and denote it by $\overline{\mathcal{H}}=(\overline{H},\alpha^{\overline{H}},\nabla^{\overline{H}})$.
Note that 
\begin{eqnarray*}
H\cap \overline{H}=\partial H
\end{eqnarray*}
by the proof of Lemma \ref{existence_of_opposite-side}.


\subsection{Halfspace and Ring $\Z[\mathcal{G}]$}
\label{sect:3.4}
Under the above preparations, we may define the halfspace.

\begin{definition}[halfspace]
A pre-halfspace $\mathcal{H}$ is said to be a \textit{halfspace}, if $H$ is a connected subgraph and its opposite side is also connected.
\end{definition}

\begin{figure}[h]
\begin{tikzpicture}
\begin{scope}[xscale=0.8, yscale=0.8]
\fill (-1,1) coordinate (r) circle (2pt);
\fill (-1,-2) coordinate (p) circle (2pt); 
\fill (2,-2) coordinate (q) circle (2pt); 

\node[below] at (q) {$0$};

\draw (-1,2)--(-1,-3);
\draw (p)--(-2,-2);
\draw (r)--(-2,2);

\draw[->, thick] (p)--(-0.5,-2);
\node[right] at (-0.5,-2) {$e_{2}^{*}$};
\draw[->, thick] (r)--(-0.5,0.5);
\node[right] at (-0.5,0.5) {$e_{2}^{*}-e_{1}^{*}$};
\end{scope}

\begin{scope}[xshift=180, xscale=0.8, yscale=0.8]
\fill (-1,1) coordinate (r) circle (2pt);
\fill (2,-2) coordinate (q) circle (2pt); 
\fill (-1,-2) coordinate (p) circle (2pt); 

\draw (-1,1)--(3,-3);
\draw (-1,2)--(-1,-3);
\draw (-1,-2)--(3,-2);

\node[below] at (q) {$x$};

\draw[->, thick] (p)--(-1.5,-2);
\node[left] at (-1.5,-2) {$x-e_{2}^{*}$};
\draw[->, thick] (r)--(-1.5,1.5);
\node[left] at (-1.5,1.5) {$e_{1}^{*}-e_{2}^{*}+x$};
\end{scope}
\end{tikzpicture}
\caption{The above figures are a halfspace and its opposite side of the left GKM graph in Figure~\ref{Figure2}.
The labels on vertices mean the values of their Thom classes on vertices, where $0$ means the value of the Thom class $\tau_{H}$ on the exterior vertex of a halfspace $H$. We call such a vertex a {\it fake vertex} when we consider the Thom class $\tau_{H}$ of $H$.
Note that the boundary $\partial H=H\cap \overline{H}$ is connected.}
\label{figure of opposite side1}
\end{figure}
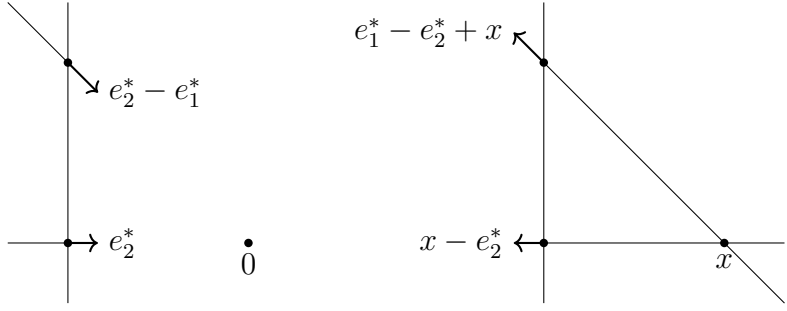

\begin{figure}[h]
\begin{tikzpicture}
\begin{scope}[xscale=0.8, yscale=0.8]
\fill (2,0) coordinate (p) circle (2pt);
\fill (2,2) coordinate (q) circle (2pt);
\fill (0,2) coordinate (r) circle (2pt);
\fill (-2,2) coordinate (s) circle (2pt);
\fill (-2,0) coordinate (t) circle (2pt);
\fill (-2,-2) coordinate (u) circle (2pt);
\fill (0,-2) coordinate (v) circle (2pt);
\fill (2,-2) coordinate (w) circle (2pt);

\draw (-3,2)--(0,2);
\draw (-3,0)--(-1,0);
\draw (-3,-2)--(0,-2);

\draw (-2,3)--(-2,-3);
\draw (0,3)--(0,1);
\draw (0,-3)--(0,-1);

\draw[->, thick] (r)--(0.5,2);
\node[below] at (0.8,2) {$e_{2}^{*}+x$};

\draw[->, thick] (v)--(0.5,-2);
\node[below] at (0.8,-2) {$e_{2}^{*}+x$};

\node[below] at (p) {$0$};
\node[below] at (q) {$0$};
\node[below] at (w) {$0$};

\node[below] at (-2.3,2) {$x$};
\node[below] at (-2.3,0) {$x$};
\node[below] at (-2.3,-2) {$x$};
\end{scope}

\begin{scope}[xshift=200, xscale=0.8, yscale=0.8]
\fill (2,0) coordinate (p) circle (2pt);
\fill (2,2) coordinate (q) circle (2pt);
\fill (0,2) coordinate (r) circle (2pt);
\fill (-2,2) coordinate (s) circle (2pt);
\fill (-2,0) coordinate (t) circle (2pt);
\fill (-2,-2) coordinate (u) circle (2pt);
\fill (0,-2) coordinate (v) circle (2pt);
\fill (2,-2) coordinate (w) circle (2pt);

\draw (0,2)--(3,2);
\draw (1,0)--(3,0);
\draw (0,-2)--(3,-2);

\draw (0,3)--(0,1);
\draw (0,-3)--(0,-1);
\draw (2,3)--(2,-3);

\draw[->, thick] (r)--(-0.5,2);
\node[below] at (-0.5,2) {$-e_{2}^{*}$};

\draw[->, thick] (v)--(-0.5,-2);
\node[below] at (-0.5,-2) {$-e_{2}^{*}$};

\node[below] at (2.3,0) {$x$};
\node[below] at (2.3,2) {$x$};
\node[below] at (2.3,-2) {$x$};

\node[below] at (s) {$0$};
\node[below] at (t) {$0$};
\node[below] at (u) {$0$};
\end{scope}
\end{tikzpicture}
\caption{The above figures are a halfspace and its opposite side of the right GKM graph in Figure~\ref{Figure2}, where $0$'s are the fake vertices.
Note that the boundary $\partial H=H\cap \overline{H}$ is not connected.}
\label{figure of opposite side2}
\end{figure}
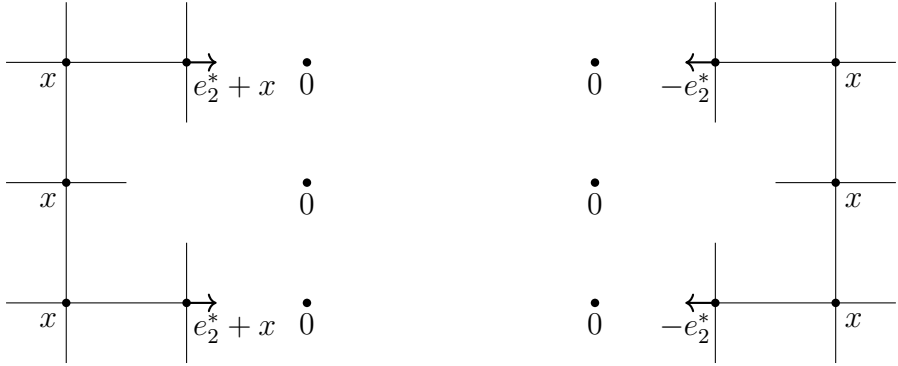

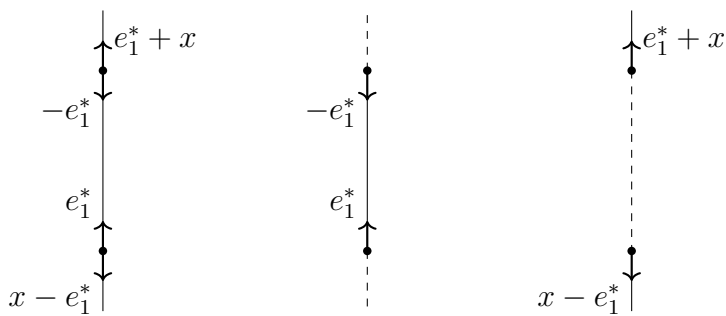
\begin{figure}[h]
\begin{tikzpicture}
\begin{scope}[xscale=0.8, yscale=0.8]
\fill (-1,1) coordinate (r) circle (2pt);
\fill (-1,-2) coordinate (p) circle (2pt); 

\draw (-1,2)--(-1,-3);

\draw[->, thick] (p)--(-1,-1.5);
\node[left] at (-1,-1.2) {$e_{1}^{*}$};
\draw[->, thick] (p)--(-1,-2.5);
\node[left] at (-1,-2.8) {$x-e_{1}^{*}$};

\draw[->, thick] (r)--(-1,0.5);
\node[left] at (-1,0.3) {$-e_{1}^{*}$};
\draw[->, thick] (r)--(-1,1.5);
\node[right] at (-1,1.5) {$e_{1}^{*}+x$};
\end{scope}

\begin{scope}[xshift=100, xscale=0.8, yscale=0.8]
\fill (-1,1) coordinate (r) circle (2pt);
\fill (-1,-2) coordinate (p) circle (2pt); 

\draw[dashed] (p)--(-1,-3);
\draw[dashed] (r)--(-1,2);
\draw (r)--(p);

\draw[->, thick] (p)--(-1,-1.5);
\node[left] at (-1,-1.2) {$e_{1}^{*}$};

\draw[->, thick] (r)--(-1,0.5);
\node[left] at (-1,0.3) {$-e_{1}^{*}$};
\end{scope}

\begin{scope}[xshift=200, xscale=0.8, yscale=0.8]
\fill (-1,1) coordinate (r) circle (2pt);
\fill (-1,-2) coordinate (p) circle (2pt); 

\draw (-1,2)--(r);
\draw (p)--(-1,-3);
\draw[dashed] (-1,1)--(-1,-2);

\draw[->, thick] (p)--(-1,-2.5);
\node[left] at (-1,-2.8) {$x-e_{1}^{*}$};

\draw[->, thick] (r)--(-1,1.5);
\node[right] at (-1,1.5) {$e_{1}^{*}+x$};
\end{scope}
\end{tikzpicture}
\caption{The middle graph (edge) $\mathcal{H}$ is a pre-halfspace of the left GKM graph. However, its opposite side is the right graph (two legs). This is not connected; therefore, $\mathcal{H}$ is not a halfspace.}
\label{figure of disconnected opposite side}
\end{figure}

Let $\mathbf{H}$ be the set of all halfspaces in $\mathcal{G}$.
Because the graph $\Gamma$ is finite and the opposite side of the halfspace is also a halfspace, we may write the set of all halfspaces by 
\begin{eqnarray*}
\mathbf{H}=\{H_{1},\ \cdots,\ H_{m},\ \overline{H_{1}},\ \cdots,\ \overline{H_{m}} \}.
\end{eqnarray*}
Put
\begin{eqnarray*}
\Z[X,\ \mathbf{H}]=\Z[X,\ H_{1},\ \cdots,\ H_{m},\ \overline{H_{1}},\ \cdots,\ \overline{H_{m}}]
\end{eqnarray*} 
where $\Z[X,\ H_{1},\ \cdots,\ H_{m},\ \overline{H_{1}},\ \cdots,\ \overline{H_{m}}]$ 
is a polynomial ring which is generated by $X$ and all elements in $\mathbf{H}$, and put
\begin{eqnarray*}
\mathcal{I}=\Big\langle H_{i}+\overline{H_{i}}-X,\ \prod_{H\in \mathbf{H}'} H\ \Big|\ i=1,\ \cdots,\ m,\ \mathbf{H}'\in \mathbf{I}(\mathbf{H}) \Big\rangle
\end{eqnarray*}
which is the ideal in $\Z[X,\ \mathbf{H}]$
generated by $H_{i}+\overline{H_{i}}-X$ $(i=1,\ \cdots,\ m)$ and the product 
\begin{align*}
\prod_{H\in \mathbf{H}'\in \mathbf{I}(\mathbf{H})} H,
\end{align*}
where 
\begin{align*}\displaystyle
\mathbf{I}(\mathbf{H})=\{\mathbf{H}'\subset \mathbf{H}\ |\ \bigcap_{H\in \mathbf{H}'}H=\emptyset\}.
\end{align*}
We define the following ring $\Z[\mathcal{G}]$:
\begin{eqnarray*}
\Z[\mathcal{G}]:=\Z[X,\ \mathbf{H}] \Big/\mathcal{I}.
\end{eqnarray*}
From the next section, we shall prove this ring $\mathbb{Z}[\mathcal{G}]$ is isomorphic to the graph equivariant cohomology ring  $H^{*}(\mathcal{G})$ under some conditions.


\section{Ring structure of the graph equivariant cohomology of a $T^{*}\mathbb{C}^{n}$-modeled GKM graph}
\label{sect:4}
The first goal of this paper is to prove the following theorem.

\begin{theorem}
\label{main-theorem1}
Let $\mathcal{G}$ be a $2n$-valent $T^{*}\mathbb{C}^{n}$-modeled GKM graph and $\mathbf{L}=\{L_{1},\ \cdots,\ L_{m}\}$ be the set of all
hyperplanes in $\mathcal{G}$.
Assume that $\mathcal{G}$ satisfies the following two assumptions:
\begin{enumerate}
\item For each $L \in \mathbf{L}$, there exist the unique pair of the halfspace $H$ and its opposite side $\overline{H}$ such that $H \cap \overline{H}=L$;
\item For every subset $\mathbf{L}'\subset \mathbf{L}$, its intersection $\displaystyle\bigcap_{L\in\mathbf{L}'} L$ is empty or connected.
\end{enumerate}
Then the following ring isomorphism holds: 
\begin{align*}
H^{*}(\mathcal{G})\simeq \Z[\mathcal{G}].
\end{align*}
\end{theorem}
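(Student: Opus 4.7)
The plan is to build an explicit ring homomorphism
\[
\Phi\colon \Z[X,\mathbf{H}] \longrightarrow H^{*}(\mathcal{G}), \qquad X\mapsto \chi,\quad H\mapsto \tau_{H},\quad \overline{H}\mapsto \tau_{\overline{H}},
\]
show it descends to $\Z[\mathcal{G}]$, and then prove that the descended map is a bijection, using the corresponding result for the $x$-forgetful graph $\widetilde{\mathcal{G}}$ (Theorem~\ref{main-theorem1-1}) as the principal engine.

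First I would check that $\Phi$ kills the ideal $\mathcal{I}$. The relations $H_{i}+\overline{H_{i}}-X$ are sent to $\tau_{H_{i}}+\tau_{\overline{H_{i}}}-\chi$, which vanishes by Lemma~\ref{existence_of_opposite-side}. For a monomial $\prod_{H\in\mathbf{H}'}H$ with $\mathbf{H}'\in\mathbf{I}(\mathbf{H})$, I would evaluate the corresponding product of Thom classes vertex-by-vertex: for every $p\in\mathcal{V}$, the hypothesis $\bigcap_{H\in\mathbf{H}'}H=\emptyset$ forces some $H\in\mathbf{H}'$ with $p\notin\mathcal{V}^{H}$, so $\tau_{H}(p)=0$ by Definition~\ref{Thom class}, and the product is zero at $p$. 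Thus $\Phi$ induces a well-defined ring homomorphism $\Phi\colon \Z[\mathcal{G}]\to H^{*}(\mathcal{G})$.

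To prove $\Phi$ is an isomorphism, I would pass to the $x$-forgetful graph $\widetilde{\mathcal{G}}$ introduced in Section~\ref{sect:5}. Assumptions (1) and (2) translate into the analogous structural hypotheses on $\widetilde{\mathcal{G}}$, and Theorem~\ref{main-theorem1-1} then provides an explicit ring description $H^{*}(\widetilde{\mathcal{G}})\simeq \Z[\widetilde{\mathcal{G}}]$ in terms of Thom classes of the (forgetful images of the) halfspaces. Since each halfspace $H$ of $\mathcal{G}$ lifts/projects its Thom class compatibly along the $x$-forgetful correspondence, and since the relation $H+\overline{H}=X$ encodes exactly the extra datum needed to recover $\tau_{H}$ and $\tau_{\overline{H}}$ separately once $\chi$ is available, one obtains a commutative diagram comparing the presentations for $\mathcal{G}$ and $\widetilde{\mathcal{G}}$. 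Both surjectivity and injectivity of $\Phi$ for $\mathcal{G}$ then reduce to the corresponding statements for $\widetilde{\mathcal{G}}$: surjectivity by showing that any $\varphi\in H^{*}(\mathcal{G})$ can, using a vertex shelling compatible with the hyperplane stratification, be written as a polynomial in the $\tau_{H}$'s and $\chi$; injectivity by exhibiting a monomial basis of $\Z[\mathcal{G}]$ (indexed by the intersection poset of $\mathbf{L}$) and pairing each monomial with a vertex on which it evaluates nontrivially.

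The main obstacle will be the correct handling of assumption (2), namely that every intersection of hyperplanes is empty or connected. This connectedness hypothesis is precisely what ensures that a product $\tau_{H_{i_1}}\cdots \tau_{H_{i_k}}$ corresponding to a nonempty stratum $\bigcap L_{i_j}$ is well-defined and nonzero at a single "initial" vertex of that stratum—otherwise the naive monomial basis would either miss relations (in the disconnected case extra idempotent-like splittings appear) or double-count elements. Making this precise, and lining it up so the only relations among monomials are the empty-intersection ones, is the delicate part; once that combinatorial rigidity is established, the comparison with $H^{*}(\widetilde{\mathcal{G}})$ via Theorem~\ref{main-theorem1-1} and the relation $\tau_{H}+\tau_{\overline{H}}=\chi$ delivers the isomorphism.
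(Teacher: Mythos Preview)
Your framework matches the paper: define the map $\Phi$ (the paper calls it $\Psi$), check it kills $\mathcal{I}$, and reduce to the $x$-forgetful graph via Theorem~\ref{main-theorem1-1}. The well-definedness argument is exactly right.

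However, the substantive parts of your surjectivity and injectivity sketches miss the actual mechanisms.

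For \emph{surjectivity}, the paper does not use any shelling. The key point is to analyze the forgetful map $\widetilde{F}\colon H^{*}(\mathcal{G})\to H^{*}(\widetilde{\mathcal{G}})$ and show $\ker\widetilde{F}=\langle\chi\rangle$ (Lemma~\ref{kernel_of_Ftilde}). Given $f\in H^{*}(\mathcal{G})$, surjectivity of $\pi'$ (from Theorem~\ref{main-theorem1-1}) lets you hit $\widetilde{F}(f)$ by some $\pi(B)$, whence $\pi(B)-f=\chi\cdot g$; a minimal-degree argument then finishes. Your ``vertex shelling'' idea is the content of Section~\ref{sect:7}, which concerns the $H^{*}(BT^{n})$-module basis of $H^{*}(\widetilde{\mathcal{G}})$ and is logically downstream of Theorem~\ref{main-theorem1}, not part of its proof.

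For \emph{injectivity}, the paper's argument is quite different from your localization/monomial-basis idea, and the role of assumption~(2) is not what you describe. The crucial step is Lemma~\ref{key2}: if $\bigcap_{k}L_{k}=\emptyset$, then one can \emph{choose} halfspaces $H_{k}$ with $\partial H_{k}=L_{k}$ so that $\bigcap_{k}H_{k}=\emptyset$. This is exactly where connectedness of intersections (assumption~(2)) is used---to rule out a path in $\bigcap_{k\neq j}L_{k}$ crossing from the $H_{j}$ side to the $\overline{H_{j}}$ side without meeting $L_{j}$. That lemma is what allows one to show $\ker\pi'=\phi\circ\phi'(\mathcal{I})$ and then $\ker\widetilde{\pi}=\ker\pi$, whence $\Psi$ is injective. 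Your proposal to ``pair each monomial with a vertex'' would instead prove injectivity of a localization map on $\Z[\mathcal{G}]$, but that alone does not compare $\ker\Psi$ with~$0$; you still need to know that the generators of $\mathcal{I}$ account for \emph{all} relations among the $\tau_{H}$'s, and the bridge for that is precisely Lemma~\ref{key2}.
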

Henceforth in this section the $T^{*}\mathbb{C}^{n}$-modeled GKM graph $\mathcal{G}=(\Gamma,\ \alpha,\ \nabla)$ satisfies assumptions $(1)$, $(2)$ of Theorem~\ref{main-theorem1}.
For example, the left GKM graph in Figure~\ref{Figure2} satisfies these assumptions; 
however, the right GKM graph does not satisfy the assumption (1) (also see Figure~\ref{figure of opposite side2}).
We also note that the following example satisfies the assumptions in Theorem~\ref{main-theorem1}.
\begin{example}
\label{example-cotangent-bundle}
The GKM graph in Figure~\ref{fig-non-toricHK} can be obtained from the cotangent bundle of a $4$-dimensional toric manifold with five fixed points. 
Note that this can not be realized as a hyperplane arrangement in $\mathbb{R}^{2}$, because there must be $8$ intersection points (i.e., $8$ vertices) if all straight lines (five lines) extend to infinity but there are only $5$ vertices in Figure~\ref{fig-non-toricHK}.
Therefore, there is no corresponding toric hyperK${\rm\ddot{a}}$hler manifold because of the fundamental theorem of toric hyperK${\rm\ddot{a}}$hler manifolds in \cite{BD}.

\begin{figure}[h]
\begin{tikzpicture}
\begin{scope}[xscale=1.0, yscale=1.0]
\fill (-1,0) coordinate (r) circle (2pt);
\fill (1,-2) coordinate (q) circle (2pt); 
\fill (-1,-2) coordinate (p) circle (2pt); 
\fill (3,0) coordinate (s) circle (2pt);
\fill (3,4) coordinate (t) circle (2pt);

\draw (-2,-1)--(4,5);
\draw (-1,2)--(-1,-3);
\draw (-2,-2)--(3,-2);
\draw (3,5)--(3,-1);
\draw (0,-3)--(5,2);

\draw[->, thick] (p)--(-0.5,-2);
\node[below] at (-0.2,-2) {$e_{2}^{*}$};
\draw[->, thick] (p)--(-1,-1.5);
\node[left] at (-1,-1.2) {$e_{1}^{*}$};

\draw[->, thick] (q)--(1.5,-1.5);
\node[left] at (1.5,-1.5) {$e_{2}^{*}+e_{1}^{*}$};

\draw[->, thick] (r)--(-0.5,0.5);
\node[right] at (-0.5,0.5) {$e_{2}^{*}+e_{1}^{*}$};

\draw[->, thick] (s)--(3,0.5);
\node[left] at (3,0.5) {$e_{1}^{*}$}; 
\end{scope}
\end{tikzpicture}
\caption{The $T^{*}\mathbb{C}^{2}$-modeled GKM graph defined from the cotangent bundle of a toric manifold. 
Note that we assume that the axial functions satisfy $\alpha(e)=-\alpha(\overline{e})$ for all edges. We omit the axial functions on legs because it is automatically determined by the definition.}
\label{fig-non-toricHK}
\end{figure}
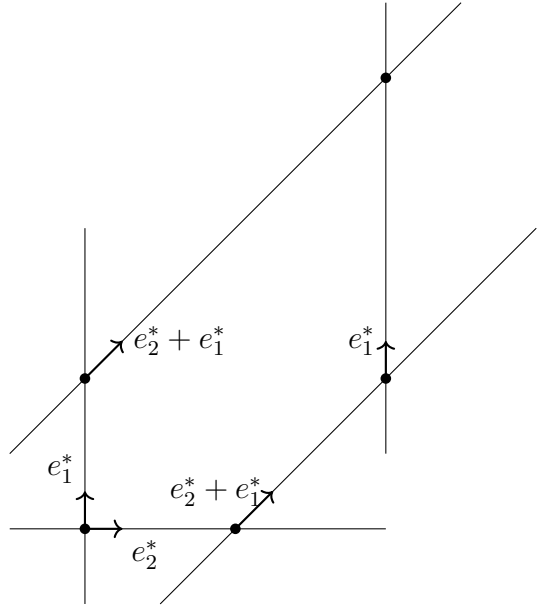
\end{example}

Let $\chi:\mathcal{V}\to H^{2}(BT)\oplus \mathbb{Z} x$ be the function such that $\chi(p)=x$ for all $p\in \mathcal{V}$, and 
$\tau_{H}$ be the Thom class of the halfspace $H$.
In order to prove Theorem~\ref{main-theorem1}, 
we will prove that the following map is an isomorphism:
\begin{eqnarray*}
\Psi:\Z[\mathcal{G}]\to H^{*}(\mathcal{G})
\end{eqnarray*}
where this map is the induced homomorphism from $\Psi(H):=\tau_{H}$ and $\Psi(X):=\chi$.

We first claim that the map $\Psi$ is well-defined (also see the definition of $\mathbb{Z}[\mathcal{G}]$). 
By Lemma~\ref{existence_of_opposite-side}, we have 
\begin{align*}
\tau_{H}+\tau_{\overline{H}}=\chi.
\end{align*}
Let $\mathbf{H}$ be the set of all halfspaces in $\mathcal{G}$.
If a subset $\mathbf{H}'\subset \mathbf{H}$ satisfies that 
$\displaystyle\bigcap_{H\in\mathbf{H}'}H=\emptyset$, then it follows from the definition of the Thom class that 
\begin{align*}
\prod_{H\in\mathbf{H}'}\tau_{H}=0.
\end{align*}
Therefore, the map $\Psi$ is a well-defined homomorphism.

From the next section, we start to prove the bijectivity of $\Psi$.
The proof will be divided into two steps: 
\begin{enumerate}
\item[(I)] To study an equivariant graph cohomology of an \textit{$x$-forgetful graph} $\mathcal{\widetilde{G}}$
and to prove $H^{*}(\widetilde{\mathcal{G}})\simeq \Z[\widetilde{\mathcal{G}}]$;
\item[(II)] To prove $\Psi$ is surjective and injective.
\end{enumerate}
In the first step, we will use the technique of \cite{MMP} (or \cite{MP}) which was used to show the ring structure of the graph equivariant cohomology of a certain GKM graph called a {\it torus graph}.
In the second step, we will use the technique of \cite{HP} which was applied to show the ring structure of the equivariant cohomology of a  toric hyperK${\rm \ddot{a}}$hler variety (also referred to as hypertoric variety).


\section{An $x$-forgetful graph $\mathcal{\widetilde{G}}$}
\label{sect:5}

Let $\mathcal{G}=(\Gamma,\ \alpha,\ \nabla)$ be a $T^{*}\mathbb{C}^{n}$-modeled graph. We assume that $\mathcal{G}$ satisfies the conditions in Theorem~\ref{main-theorem1}.
In this section, as a preparation to prove Theorem~\ref{main-theorem1}, we introduce an $x$-forgetful graph $\mathcal{\widetilde{G}}$ and its graph equivariant cohomology $H^{*}(\widetilde{\mathcal{G}})$, and prove the ring structure of $H^{*}(\widetilde{\mathcal{G}})$.


\subsection{$x$-forgetful graph $\mathcal{\widetilde{G}}$ and its graph equivariant cohomology}
\label{sect:5.1}

For every $\mathcal{G}$, we may define an \textit{$x$-forgetful graph} $\mathcal{\widetilde{G}}=(\Gamma,\ \widetilde{\alpha},\ \nabla)$ as follows:
$\Gamma$ and $\nabla$ is the same graph and connection with $\mathcal{G}$, but the function 
$\widetilde{\alpha}$ is defined as
\begin{eqnarray*}
\widetilde{\alpha}=F\circ \alpha:\mathcal{E}\to H^{2}(BT^{n})
\end{eqnarray*}
where $F:H^{2}(BT^{n})\oplus \Z x\to H^{2}(BT^{n})(\simeq (\algt^{n})^{*}_{\Z})$ is the  the $x$-forgetful map.
We call $\widetilde{\alpha}$ an {\it $x$-forgetful axial function}.

\begin{figure}[h]
\begin{tikzpicture}
\begin{scope}[xscale=1.0, yscale=1.0]
\fill (-1,1) coordinate (r) circle (2pt);
\fill (2,-2) coordinate (q) circle (2pt); 
\fill (-1,-2) coordinate (p) circle (2pt); 

\draw (-2,2)--(3,-3);
\draw (-1,2)--(-1,-3);
\draw (-2,-2)--(3,-2);

\draw[->, thick] (p)--(-0.5,-2);
\node[below] at (-0.2,-2) {$e_{2}^{*}$};
\draw[->, thick] (p)--(-1,-1.5);
\node[left] at (-1,-1.2) {$e_{1}^{*}$};
\draw[->, thick] (p)--(-1.5,-2);
\node[above] at (-2,-2) {$-e_{2}^{*}$};
\draw[->, thick] (p)--(-1,-2.5);
\node[left] at (-1,-2.8) {$-e_{1}^{*}$};

\draw[->, thick] (q)--(1.5,-2);
\node[below] at (1.2,-2) {$-e_{2}^{*}$};
\draw[->, thick] (q)--(2.5,-2);
\node[above] at (2.5,-2) {$e_{2}^{*}$};
\draw[->, thick] (q)--(2.5,-2.5);
\node[below] at (2.5,-2.5) {$e_{2}^{*}-e_{1}^{*}$};
\draw[->, thick] (q)--(1.5,-1.5);
\node[left] at (1.5,-1.5) {$e_{1}^{*}-e_{2}^{*}$};

\draw[->, thick] (r)--(-1,0.5);
\node[left] at (-1,0.3) {$-e_{2}^{*}$};
\draw[->, thick] (r)--(-1,1.5);
\node[right] at (-1,1.5) {$e_{1}^{*}$};
\draw[->, thick] (r)--(-1.5,1.5);
\node[left] at (-1.6,1.5) {$e_{1}^{*}-e_{2}^{*}$};
\draw[->, thick] (r)--(-0.5,0.5);
\node[right] at (-0.5,0.5) {$e_{2}^{*}-e_{1}^{*}$};
\end{scope}
\end{tikzpicture}
\caption{An example of the $x$-forgetful graph for the left GKM graph in Figure~\ref{Figure2}.}
\label{fig-x-forget}
\end{figure}
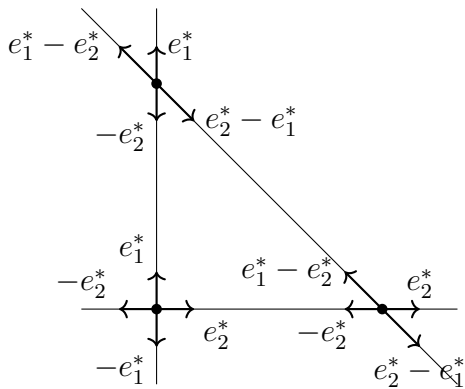

Moreover we define a graph equivariant cohomology of $\mathcal{\widetilde{G}}$ as follows:
\begin{eqnarray*}
H^{*}(\mathcal{\widetilde{G}})=\{f:\mathcal{V}\to H^{*}(BT^{n})\ |\ f(i(\epsilon))-f(t(\epsilon))\equiv 0\ ({\rm mod}\ \widetilde{\alpha}(\epsilon))\}.
\end{eqnarray*}
Let $L\in \mathbf{L}$ be a hyperplane in $\mathcal{G}$.
Fix the halfsapce $H$ such that 
$\partial H=L$.
Define the \textit{Thom class} of $L$ by
\begin{align*}
\tau_{L}=F\circ \tau_{H}:\mathcal{V}\to H^{2}(BT^{n}),
\end{align*} 
where $F$ is the $x$-forgetful map.
Note that for the opposite side $\overline{H}$ of $H$, the following relation:
\begin{align*}
F\circ \tau_{\overline{H}}=-\tau_{L};
\end{align*} 
therefore, the Thom class of $L$ depends on the choice of a halfspace $H$ with $\partial H=L$.
So we 
fix $\{H_{1},\ \cdots,\ H_{m}\}$ in the set of all halfspaces $\mathbf{H}=\{H_{1},\ \cdots,\ H_{m},\ \overline{H}_{1},\ \cdots,\ \overline{H}_{m}\}$.
By the assumption (1) of Theorem~\ref{main-theorem1},
there is a one to one corresponding between $H$ and $L=H\cap\overline{H}$.
Therefore, we may put the set of all hyperplanes by 
$\mathbf{L}=\{L_{1},\ \cdots,\ L_{m}\}$ where $L_{i}=H_{i}\cap \overline{H}_{i}$ for all 
$i=1,\ \cdots,\ m$.
Let $\mathcal{V}^{L}$ be the set of all vertices on $L$.
Then,  we have
\begin{eqnarray*}
\tau_{L}(p)=\left\{
\begin{array}{ll}
0 & p\not\in \mathcal{V}^{L} \\
\widetilde{\alpha}(n_{H}(p)) & p\in \mathcal{V}^{L}
\end{array} \right.
\end{eqnarray*}
by the definitions of $\tau_{H}$ and the $x$-forgetful map $F$,
where $n_{H}(p)$ is a normal edge (or leg) of $H$ on $p$.
Since $\tau_{H}\in H^{*}(\mathcal{G})$ (see Lemma~\ref{thom}), it is easy to check that 
\begin{align*}
\tau_{L}\in H^{*}(\mathcal{\widetilde{G}}).
\end{align*}

\subsection{The ring structure of $H^{*}(\mathcal{\widetilde{G}})$}
\label{sect:5.2}

Next we define the following ring:
\begin{eqnarray*}
\Z[\mathcal{\widetilde{G}}]=\Z[L_{1},\ \cdots,\ L_{m}]\Big/\Big\langle \prod_{L\in \mathbf{L}'}L\ \Big|\ \mathbf{L}'\in \mathbf{I}(\mathbf{L}) \Big\rangle,
\end{eqnarray*}
where
 $\displaystyle\mathbf{I}(\mathbf{L})=\{\mathbf{L}'\subset \mathbf{L}\ |\ \bigcap_{L\in\mathbf{L}'}L=\emptyset\}$
and $\langle \prod_{L\in \mathbf{L}'}L\ |\ \mathbf{L}'\in \mathbf{I}(\mathbf{L}) \rangle$ is an ideal which is generated by 
the product $\prod_{L\in \mathbf{L}'}L$ for all $\mathbf{L}'\in \mathbf{I}(\mathbf{L})$.
 
The goal of this section (the first step (I) of the proof of Theorem~\ref{main-theorem1}) is to prove 
the following theorem:
\begin{theorem}
\label{main-theorem1-1}
Let $\mathcal{G}$ be a $2n$-valent $T^{*}\mathbb{C}^{n}$-modeled GKM graph and $\mathbf{L}=\{L_{1},\ \cdots,\ L_{m}\}$ be the set of all
hyperplanes in $\mathcal{G}$.
Assume that $\mathcal{G}$ satisfies the two assumptions in Theorem~\ref{main-theorem1}. 
If $\widetilde{\mathcal{G}}$ is the $x$-forgetful graph, then 
the following ring isomorphism holds:
\begin{align*}
H^{*}(\mathcal{\widetilde{G}})\simeq \Z[\mathcal{\widetilde{G}}].
\end{align*}
\end{theorem}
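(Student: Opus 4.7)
The plan is to define a ring homomorphism $\widetilde{\Psi}\colon \mathbb{Z}[\widetilde{\mathcal{G}}] \to H^{*}(\widetilde{\mathcal{G}})$ by $\widetilde{\Psi}(L_{i}) := \tau_{L_{i}}$ and show it is an isomorphism, adapting the torus-graph argument of \cite{MMP}. Well-definedness is immediate: each Thom class $\tau_{L}$ vanishes outside $\mathcal{V}^{L}$, so any monomial $\prod_{L \in \mathbf{L}'} \tau_{L}$ is supported on $\bigcap_{L \in \mathbf{L}'} \mathcal{V}^{L}$; whenever $\bigcap_{L \in \mathbf{L}'} L = \emptyset$ this support is empty, the product vanishes identically, and the relations generating the defining ideal of $\mathbb{Z}[\widetilde{\mathcal{G}}]$ are respected.

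For surjectivity the crucial observation is that at each vertex $p \in \mathcal{V}$ exactly $n$ hyperplanes $L_{1}^{p}, \ldots, L_{n}^{p}$ pass through $p$, one per $1$-dimensional pair in $\mathcal{E}_{p}$ by Lemma~\ref{key-properties-hyperplane}. A valency count combined with assumption~(2) forces $\bigcap_{j=1}^{n} L_{j}^{p} = \{p\}$, so the class $\pi_{p} := \prod_{j=1}^{n} \tau_{L_{j}^{p}} \in H^{*}(\widetilde{\mathcal{G}})$ is supported only at $p$ with value $\pm \prod_{j=1}^{n} \widetilde{\alpha}(\epsilon_{j}^{+})$, a product of $n$ elements spanning $H^{2}(BT^{n})$. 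More generally, any subset $\mathbf{L}' \subset \mathbf{L}$ with $\bigcap_{L \in \mathbf{L}'} L \neq \emptyset$ yields a class $\pi_{\mathbf{L}'} := \prod_{L \in \mathbf{L}'} \tau_{L}$ supported on that intersection. I would then fix a shelling order on the facets of the simplicial complex $\Delta_{\mathbf{L}}$, whose faces are the subsets of $\mathbf{L}$ with nonempty intersection and whose facets biject with $\mathcal{V}$, and proceed by induction on this order: given $f \in H^{*}(\widetilde{\mathcal{G}})$ vanishing at all vertices preceding $p$, subtract an $H^{*}(BT^{n})$-multiple of the $\pi_{\mathbf{L}'}$ attached to the restriction face of the shelling at $p$, annihilating $f(p)$ without disturbing the earlier vanishings. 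The congruence relations together with the spanning property of $\{\widetilde{\alpha}(\epsilon_{j}^{+})\}_{j}$ at $p$ guarantee that the required coefficient exists in $H^{*}(BT^{n})$ at each step.

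For injectivity I would compare Hilbert series. The shellability and consequent Cohen-Macaulayness of $\Delta_{\mathbf{L}}$ make its Stanley-Reisner ring $\mathbb{Z}[\widetilde{\mathcal{G}}]$ a free module over the polynomial subring generated by a linear system of parameters coming from $H^{2}(BT^{n})$, of rank equal to the number of facets, namely $|\mathcal{V}|$. The inductive procedure of the previous paragraph simultaneously produces an $H^{*}(BT^{n})$-module generating set for $H^{*}(\widetilde{\mathcal{G}})$ of cardinality at most $|\mathcal{V}|$ (a set later refined into an explicit module basis by Theorem~\ref{module-generators}), bounding the Hilbert series of $H^{*}(\widetilde{\mathcal{G}})$ from above by that of $\mathbb{Z}[\widetilde{\mathcal{G}}]$. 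Since $\widetilde{\Psi}$ is already surjective, this bound forces equality of Hilbert series and hence injectivity, so $\widetilde{\Psi}$ is an isomorphism.

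The principal obstacle I expect is the surjectivity step, and in particular producing a shelling of $\Delta_{\mathbf{L}}$ compatible with the congruence relations and verifying that each inductive subtraction can actually be executed with coefficients in $H^{*}(BT^{n})$. Assumption~(2) of the theorem is essential throughout: it ensures $\Delta_{\mathbf{L}}$ is a genuine simplicial complex rather than only a simplicial poset, so that the Stanley-Reisner presentation used to define $\mathbb{Z}[\widetilde{\mathcal{G}}]$ is the correct combinatorial model of $\mathcal{G}$, and it is precisely what allows one to treat each $\pi_{p}$ as a point class supported on a single vertex; without it the $0$-valent intersection $\bigcap_{j=1}^{n} L_{j}^{p}$ could split into several components and the inductive scheme would break down.
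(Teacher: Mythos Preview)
Your proposal has a genuine gap: both the surjectivity and injectivity arguments hinge on fixing a shelling of $\Delta_{\mathbf{L}}$, but shellability is \emph{not} among the hypotheses of Theorem~\ref{main-theorem1-1}. The only assumptions available are (1) and (2) of Theorem~\ref{main-theorem1}, and the paper does not claim these imply shellability; on the contrary, shellability is imposed as a \emph{separate additional hypothesis} in Theorem~\ref{module-generators}. So when you write ``fix a shelling order'' and ``the shellability and consequent Cohen--Macaulayness of $\Delta_{\mathbf{L}}$'', you are assuming something not given. Moreover, your injectivity argument appeals to Theorem~\ref{module-generators} for the module basis, but that theorem is proved \emph{after} Theorem~\ref{main-theorem1-1} and uses it as input, so invoking it here is circular.

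The paper's actual proof avoids shellability entirely. Injectivity (Lemma~\ref{Psi'_inj}) is handled by a direct localization argument: one shows the restriction map $\rho\colon \mathbb{Z}[\widetilde{\mathcal{G}}]\to \bigoplus_{p}\mathbb{Z}[\widetilde{\mathcal{G}}]_{p}$ is injective by an elementary computation with monomial ideals (Lemma~\ref{rho_inje}), then factors $\Psi'$ through it. Surjectivity (Lemma~\ref{Psi'_surj}) proceeds by enlarging the vanishing locus $Z(h)$ of $h\in H^{*}(\widetilde{\mathcal{G}})$ one vertex at a time, but the key ingredient is not a shelling: it is Lemma~\ref{key-lemma-surj-Psi'}, which uses the connectedness assumption~(2) to propagate the condition $f(p)\notin I(K)$ along the subgraph $K=\bigcap L_{i}$, guaranteeing that subtracting a monomial in Thom classes does not accidentally kill values at vertices outside $K$. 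Your shelling-based inductive scheme would work if a shelling were available, and it is morally the same ``peel off one vertex at a time'' idea, but the paper's version is what the stated hypotheses actually support.
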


Define the induced homomorphism  
\begin{eqnarray*}
\Psi':\Z[\mathcal{\widetilde{G}}]\to H^{*}(\mathcal{\widetilde{G}})
\end{eqnarray*}
by $\Psi'(L)=\tau_{L}$.
Obviously, $\Psi'$ is a well-defined homomorphism.
In order to show Theorem~\ref{main-theorem1-1}, 
it is enough to prove that this homomorphism is bijective.

\subsection{The localization map and the injectivity of $\Psi'$}
\label{sect:5.3}

We first prove the injectivity of $\Psi'$.
In order to prove it, we introduce the map $\rho$ which is the analogue of the localization of the equivariant cohomology of a $T$-manifold to its fixed points.  

Let us define the following ring:
\begin{eqnarray*}
\Z[\mathcal{\widetilde{G}}]_{p}=\Z[L_{1},\ \cdots,\ L_{m}]/\langle L\ |\ p\not\in \mathcal{V}^{L} \rangle,
\end{eqnarray*}
where $\langle L\ |\ p\not\in \mathcal{V}^{L} \rangle$ is an ideal which is generated by $L$ such that $p\not\in\mathcal{V}^{L}$.
As a beginning, we prove the following lemma.
\begin{lemma}
\label{restricted-to-pt}
For the $x$-forgetful graph $\mathcal{\widetilde{G}}=(\Gamma,\ \widetilde{\alpha},\ \nabla)$,
we have
\begin{eqnarray*}
I_{p}:\Z[\mathcal{\widetilde{G}}]_{p}\simeq \Z[L\ |\ p\in \mathcal{V}^{L}]=\Z[L_{p,1},\ \cdots,\ L_{p,n}]\stackrel{\iota_{p}}{\simeq} H^{*}(BT^{n}),
\end{eqnarray*}
where the last isomorphism $\iota_{p}$ is defined by $\iota_{p}:L\mapsto \tau_{L}(p)$.
\end{lemma}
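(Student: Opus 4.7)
The plan is to split the statement into its two asserted isomorphisms and handle them separately: the first, $I_p$, is essentially formal, while the real content lies in the second, $\iota_p$, which uses the defining structure of a $T^{*}\mathbb{C}^{n}$-modeled GKM graph.

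First I would observe that $I_p$ requires nothing beyond unraveling the definition. Since $\mathbb{Z}[\widetilde{\mathcal{G}}]_{p}$ is defined as the polynomial ring $\mathbb{Z}[L_{1},\ldots,L_{m}]$ modulo the ideal generated by those $L$ with $p\notin\mathcal{V}^{L}$, setting those generators to zero identifies the quotient with the polynomial ring on the generators $\{L\mid p\in\mathcal{V}^{L}\}$. To justify the indexing $L_{p,1},\ldots,L_{p,n}$, I would then count: because $|\mathcal{E}_{p}|=2n$ partitions into exactly $n$ $1$-dimensional pairs, Lemma~\ref{key-properties-hyperplane} produces, for each such pair $\{\epsilon^{+},\epsilon^{-}\}$, a unique hyperplane $\mathcal{L}$ with $\mathcal{E}_{p}^{L}=\mathcal{E}_{p}\setminus\{\epsilon^{+},\epsilon^{-}\}$, and distinct pairs yield distinct hyperplanes. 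Conversely, any hyperplane through $p$ arises this way. Hence exactly $n$ hyperplanes pass through $p$.

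The main step is to check that $\iota_{p}$ is a ring isomorphism, for which it suffices to show that the Thom classes $\tau_{L_{p,1}}(p),\ldots,\tau_{L_{p,n}}(p)$ form a $\mathbb{Z}$-basis of $H^{2}(BT^{n})=\mathfrak{t}^{*}_{\mathbb{Z}}$. Fix a halfspace $H_{j}$ with $\partial H_{j}=L_{p,j}$; its normal edge at $p$ is one member of the $j$-th $1$-dimensional pair $\{\epsilon_{j}^{+},\epsilon_{j}^{-}\}$. The relation $\alpha(\epsilon_{j}^{+})+\alpha(\epsilon_{j}^{-})=x$ combined with $F(x)=0$ gives $\widetilde{\alpha}(\epsilon_{j}^{-})=-\widetilde{\alpha}(\epsilon_{j}^{+})$, so $\tau_{L_{p,j}}(p)=\pm\widetilde{\alpha}(\epsilon_{j}^{+})$. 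By Definition~\ref{def_modeled_graph}(2), $\{\alpha(\epsilon_{1}^{+}),\ldots,\alpha(\epsilon_{n}^{+}),x\}$ is a $\mathbb{Z}$-basis of $\mathfrak{t}^{*}_{\mathbb{Z}}\oplus\mathbb{Z}x$; writing $\alpha(\epsilon_{j}^{+})=\widetilde{\alpha}(\epsilon_{j}^{+})+c_{j}x$ shows that the change-of-basis matrix from $\{\widetilde{\alpha}(\epsilon_{j}^{+}),x\}$ to $\{\alpha(\epsilon_{j}^{+}),x\}$ is unipotent upper triangular, so $\widetilde{\alpha}(\epsilon_{1}^{+}),\ldots,\widetilde{\alpha}(\epsilon_{n}^{+})$ is a $\mathbb{Z}$-basis of $\mathfrak{t}^{*}_{\mathbb{Z}}$. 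Thus $\iota_{p}$ sends the polynomial generators $L_{p,j}$ to a $\mathbb{Z}$-basis of $H^{2}(BT^{n})$ up to signs, and extends to a ring isomorphism with $H^{*}(BT^{n})=\mathrm{Sym}_{\mathbb{Z}}(H^{2}(BT^{n}))$.

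I do not expect a genuine obstacle here: the only subtlety is the sign ambiguity introduced by the choice of halfspace $H_{j}$ representing each hyperplane $L_{p,j}$, but this only alters $\iota_{p}$ by a sign on each generator and so preserves the isomorphism. The real force of the argument is packaged in Definition~\ref{def_modeled_graph}(2) together with Lemma~\ref{key-properties-hyperplane}, and once these are unpacked the claim follows from elementary linear algebra.
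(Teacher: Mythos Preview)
Your proof is correct and follows essentially the same approach as the paper: both obtain the first isomorphism by inspection of the quotient, count the hyperplanes through $p$ via Lemma~\ref{key-properties-hyperplane}, and deduce that $\iota_p$ is an isomorphism from Definition~\ref{def_modeled_graph}(2) by observing that the $x$-forgetful map sends the basis $\{\alpha(\epsilon_j^+),x\}$ to a basis $\{\widetilde{\alpha}(\epsilon_j^+)\}$ of $\mathfrak{t}^*_{\mathbb{Z}}$. You are a bit more explicit than the paper about the sign ambiguity and the unipotent change-of-basis, but the underlying argument is the same.
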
 
\begin{proof}
By the definition of $\Z[\mathcal{\widetilde{G}}]_{p}$, the first equivalence $\Z[\mathcal{\widetilde{G}}]_{p}\simeq \Z[L\ |\ p\in \mathcal{V}^{L}]$ is obvious.
We claim $\Z[L\ |\ p\in \mathcal{V}^{L}]=\Z[L_{p,1},\ \cdots,\ L_{p,n}]\stackrel{\iota_{p}}{\simeq} H_{T^{n}}^{*}(pt)$.

Because $\Gamma$ is a $2n$-valent graph,
we may put
\begin{eqnarray*}
\mathcal{E}_{p}=\{\epsilon_{1}^{+}(p),\ \cdots,\ \epsilon_{n}^{+}(p),\ \epsilon_{1}^{-}(p),\ \cdots,\ \epsilon_{n}^{-}(p) \}
\end{eqnarray*}
 for all $p\in \mathcal{V}$.
There is a unique $L_{i}$ such that 
\begin{align*}
\tau_{L_{i}}(p)=\widetilde{\alpha}(\epsilon_{i}^{+}(p))=-\widetilde{\alpha}(\epsilon_{i}^{-}(p))
\end{align*} 
for all $i=1,\ \cdots,\ n$ by Lemma \ref{key-properties-hyperplane}.
Hence, we have 
\begin{align*}
\Z[L\ |\ p\in \mathcal{V}^{L}]=\Z[L_{p,1},\ \cdots,\ L_{p,n}].
\end{align*}
Next, 
by the definition of the axial function of a $T^{*}\mathbb{C}^{n}$-modeled GKM graph, 
\begin{eqnarray*}
\mathbb{Z}\alpha(\epsilon_{1}^{+}(p))\oplus \cdots\oplus \mathbb{Z}\alpha(\epsilon_{n}^{+}(p))\oplus \mathbb{Z}x\simeq H^{2}(BT^{n})\oplus\Z x.
\end{eqnarray*}
Hence, because $\widetilde{\alpha}:=F\circ \alpha$ is defined by the $x$-forgetful map $F:H^{2}(BT^{n})\oplus\Z x\to H^{2}(BT^{n})$, we have that 
\begin{eqnarray*}
\Z[ \widetilde{\alpha}(\epsilon_{1}^{+}(p)),\ \cdots,\ \widetilde{\alpha}(\epsilon_{n}^{+}(p)) ]\simeq H^{*}(BT^{n}).
\end{eqnarray*}
Therefore, 
$\iota_{p}$ is an isomorphism.
\end{proof}

Next we shall define a {\it localization map} $\rho:\Z[\mathcal{\widetilde{G}}]\to \bigoplus_{p\in\mathcal{V}}\Z[\mathcal{\widetilde{G}}]_{p}$ and prove that it is injective in Lemma \ref{rho_inje}.
Since the set $\mathbf{L}'\in \mathbf{I}(\mathbf{L})$ satisfies that  $\cap_{L\in \mathbf{L}'}L=\emptyset$, for every $p\in \mathcal{V}$ 
there is an $L\in \mathbf{L}'$ such that $p\not\in \mathcal{V}^{L}$. Therefore, there exists the following relation for two ideals in $\mathbb{Z}[L_{1},\ldots, L_{m}]$: 
\begin{eqnarray*}
\langle L\ |\ p\not\in \mathcal{V}^{L} \rangle \supset \langle \prod_{L\in \mathbf{L}'}L\ |\ \mathbf{L}'\in \mathbf{I}(\mathbf{L}) \rangle.
\end{eqnarray*}
Hence, the following natural homomorphism is well-defined:
\begin{eqnarray*}
\rho_{p}:\Z[\mathcal{\widetilde{G}}]:=\Z[L_{1},\ \cdots,\ L_{m}]\Big/\Big\langle \prod_{L\in \mathbf{L}'}L\ \Big|\ \mathbf{L}'\in \mathbf{I}(\mathbf{L}) \Big\rangle\longrightarrow \Z[\mathcal{\widetilde{G}}]_{p}:=\Z[L_{1},\ \cdots,\ L_{m}]/\langle L\ |\ p\not\in \mathcal{V}^{L} \rangle.
\end{eqnarray*}
For this projection $\rho_{p}$, we can easily show that its kernel is as follows: 
\begin{eqnarray*}
{\rm Ker}\ \rho_{p}=\langle L\ |\ p\not\in \mathcal{V}^{L} \rangle/\langle \prod_{L\in \mathbf{L}'}L\ |\ \mathbf{L}'\in \mathbf{I}(\mathbf{L}) \rangle.
\end{eqnarray*}
Now we may define the homomorphism $\rho$ as follows:  
\begin{eqnarray*}
\rho=\bigoplus_{p\in \mathcal{V}}\rho_{p}:\Z[\mathcal{\widetilde{G}}]\longrightarrow \bigoplus_{p\in\mathcal{V}}
\Z[\mathcal{\widetilde{G}}]_{p},
\end{eqnarray*}
such that 
\begin{align*}
\rho(Y)=\bigoplus_{p\in \mathcal{V}}\rho_{p}(Y)
\end{align*}
 for $Y\in \Z[\mathcal{\widetilde{G}}]$.
We call $\rho$ a {\it localization map}.
 The following lemma holds. 
\begin{lemma}
\label{rho_inje}
$\rho$ is injective.
\end{lemma}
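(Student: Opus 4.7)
The first step is to observe that the ideal defining $\Z[\mathcal{\widetilde{G}}]$ is a monomial ideal, generated by the square-free monomials $\prod_{L\in \mathbf{L}'}L$ with $\mathbf{L}'\in \mathbf{I}(\mathbf{L})$. Since $\mathbf{I}(\mathbf{L})$ is closed under taking supersets (if $\bigcap_{i\in J}L_i=\emptyset$ and $J\subseteq J'$ then $\bigcap_{i\in J'}L_i=\emptyset$ as well), a monomial $L^\alpha$ lies in this ideal if and only if its support $\mathrm{supp}(\alpha):=\{i:\alpha_i>0\}$ belongs to $\mathbf{I}(\mathbf{L})$. Consequently $\Z[\mathcal{\widetilde{G}}]$ admits a $\Z$-basis consisting of those monomials $L^\alpha$ whose support lies in
\begin{align*}
\mathcal{J}\;:=\;\{J\subseteq\{1,\dots,m\} : \textstyle\bigcap_{i\in J}L_i\neq \emptyset\}.
\end{align*}
This normal-form statement is the only structural fact about $\Z[\mathcal{\widetilde{G}}]$ that the argument needs.

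Given $Y\in \ker\rho$, I would write $Y=\sum_\alpha c_\alpha L^\alpha$ in this normal form and prove $c_\alpha=0$ for every $\alpha$. Fix one $\alpha$ with support $J\in\mathcal{J}$. Because $\bigcap_{i\in J}L_i$ is a non-empty subgraph of $\Gamma$, it contains at least one vertex $p$; set $J_p:=\{i : p\in \mathcal{V}^{L_i}\}\supseteq J$. The definition of $\rho_p$ sends every variable $L_i$ with $p\notin \mathcal{V}^{L_i}$ to zero, so in $\Z[\mathcal{\widetilde{G}}]_p$ one obtains
\begin{align*}
\rho_p(Y)\;=\;\sum_{\mathrm{supp}(\beta)\subseteq J_p} c_\beta\, L^\beta .
\end{align*}
By Lemma~\ref{restricted-to-pt}, $\Z[\mathcal{\widetilde{G}}]_p\simeq \Z[L_{p,1},\dots,L_{p,n}]$ is a \emph{free} polynomial ring in the $n$ hyperplanes through $p$, whose monomials are $\Z$-linearly independent. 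Hence $\rho_p(Y)=0$ forces every coefficient $c_\beta$ with $\mathrm{supp}(\beta)\subseteq J_p$ to vanish, and in particular $c_\alpha=0$. Since $\alpha$ was arbitrary, $Y=0$ in $\Z[\mathcal{\widetilde{G}}]$, which proves injectivity.

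The only step in this outline that involves more than direct bookkeeping is guaranteeing the existence of a vertex $p\in \bigcap_{i\in J}L_i$ for each $J\in\mathcal{J}$; this is immediate from the fact that each hyperplane $L_i$ is a GKM subgraph carrying its own vertex set, so a non-empty intersection of such subgraphs must contain a common vertex. Everything else is simply the fixed-point localisation argument familiar from the torus-graph setting of \cite{MMP,MP}, transported to the $x$-forgetful graph via the identification in Lemma~\ref{restricted-to-pt}.
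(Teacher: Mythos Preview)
Your proof is correct and follows essentially the same approach as the paper: both arguments exploit that the defining ideal of $\Z[\widetilde{\mathcal{G}}]$ is a square-free monomial (Stanley--Reisner) ideal, and both reduce injectivity to the observation that a monomial whose support has non-empty intersection survives localisation at some vertex. The only difference is presentational---you work with the explicit monomial basis of the quotient, whereas the paper phrases the same computation as an inclusion of ideals $\bigcap_{p\in\mathcal{V}}\langle L : p\notin\mathcal{V}^L\rangle \subset \langle \prod_{L\in\mathbf{L}'}L : \mathbf{L}'\in\mathbf{I}(\mathbf{L})\rangle$ in the polynomial ring.
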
 
\begin{proof}
Obviously we have 
\begin{eqnarray*}
{\rm Ker}\ \rho=\bigcap_{p\in \mathcal{V}} {\rm Ker}\ \rho_{p}=\left(\bigcap_{p\in \mathcal{V}}\ \langle L\ |\ p\not\in \mathcal{V}^{L} \rangle\right)
\Bigg/\left\langle \prod_{L\in \mathbf{L}'}L\ \Bigg|\ \mathbf{L}'\in \mathbf{I}(\mathbf{L}) \right\rangle.
\end{eqnarray*}
Hence, to prove $\rho$ is injective, it is enough to show that ${\rm Ker}\ \rho=\{0 \}$, i.e., we shall prove the following relation: 
\begin{eqnarray}
\label{rel_for_rho_inj}
\bigcap_{p\in \mathcal{V}}\ \langle L\ |\ p\not\in \mathcal{V}^{L} \rangle\subset \left\langle \prod_{L\in \mathbf{L}'}L\ |\ \mathbf{L}'\in \mathbf{I}(\mathbf{L}) \right\rangle
(\subset \Z[L_{1},\ \cdots,\ L_{m}]).
\end{eqnarray} 
Take a non-zero polynomial 
\begin{eqnarray*}
A&=&\sum_{a_{1},\cdots,a_{m}\in \N\cup \{0\}} k(a_{1},\ \cdots,\ a_{m}) L_{1}^{a_{1}}\cdots L_{m}^{a_{m}} \\
&\in& \bigcap_{p\in \mathcal{V}}\ \langle L\ |\ p\not\in \mathcal{V}^{L} \rangle \subset \Z[L_{1},\ \cdots,\ L_{m}],
\end{eqnarray*}
where we only consider the case when $k(a_{1},\ \cdots,\ a_{m})\in \Z-\{0\}$.
Because $A$ is an element of the monomial ideal $\langle L\ |\ p\not\in \mathcal{V}^{L} \rangle$ for all $p\in \mathcal{V}$, we have that
for each term 
\begin{align*}
k(a_{1},\ \cdots,\ a_{m}) L_{1}^{a_{1}}\cdots L_{m}^{a_{m}}\in \langle L\ |\ p\not\in \mathcal{V}^{L} \rangle.
\end{align*}
This shows that for each term $k(a_{1},\ \cdots,\ a_{m}) L_{1}^{a_{1}}\cdots L_{m}^{a_{m}}$ of a non-zero element $A$ there exists 
$r(=r(p))\in\{1,\ \cdots,\ m\}$ such that $p\not\in \mathcal{V}^{L_{r}}$ and $a_{r}\not=0$.
Because this satisfies for all $p\in \mathcal{V}$, we have that each term can be written by 
\begin{eqnarray*}
k(a_{1},\ \cdots,\ a_{m}) L_{1}^{a_{1}}\cdots L_{m}^{a_{m}}
=B\prod_{p\in \mathcal{V}}L_{r(p)}^{a_{r(p)}},
\end{eqnarray*}
where $B$ is some monomial in $\Z[L_{1},\ \cdots,\ L_{m}]$ and $a_{r(p)}\not=0$.
Since $p\not\in\mathcal{V}^{L_{r(p)}}$, we have that 
\begin{align*}
\bigcap_{p\in \mathcal{V}}L_{r(p)}=\emptyset.
\end{align*}
This shows that for each term of $A$
\begin{align*}
k(a_{1},\ \cdots,\ a_{m}) L_{1}^{a_{1}}\cdots L_{m}^{a_{m}}=B\prod_{p\in \mathcal{V}}L_{r(p)}^{a_{r(p)}}\in 
\langle \prod_{L\in \mathbf{L}'}L\ |\ \mathbf{L}'\in \mathbf{I}(\mathbf{L}) \rangle.
\end{align*}
Therefore, $A\in \langle \prod_{L\in \mathbf{L}'}L\ |\ \mathbf{L}'\in\mathbf{I}(\mathbf{L}) \rangle$.
This establishes the relation \eqref{rel_for_rho_inj}.
\end{proof}

By using Lemma \ref{restricted-to-pt} and \ref{rho_inje}, we can prove the following lemma for the homomorphism $\Psi':\mathbb{Z}[\widetilde{\mathcal{G}}]\to H^{*}(\widetilde{\mathcal{G}})$ which is defined from $\Psi'(L):=\tau_{L}$.

\begin{lemma}
\label{Psi'_inj}
$\Psi'$ is injective.
\end{lemma}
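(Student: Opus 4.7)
The plan is to reduce the injectivity of $\Psi'$ to the injectivity of the localization map $\rho$ (Lemma~\ref{rho_inje}) via evaluation at vertices. Specifically, for each vertex $p \in \mathcal{V}$, define the evaluation homomorphism $\mathrm{ev}_{p}: H^{*}(\widetilde{\mathcal{G}}) \to H^{*}(BT^{n})$ by $\mathrm{ev}_{p}(\varphi) = \varphi(p)$. I would then verify the commutativity of the diagram
\begin{equation*}
\begin{array}{ccc}
\Z[\widetilde{\mathcal{G}}] & \stackrel{\Psi'}{\longrightarrow} & H^{*}(\widetilde{\mathcal{G}}) \\
{\scriptstyle \rho_{p}}\downarrow & & \downarrow {\scriptstyle \mathrm{ev}_{p}} \\
\Z[\widetilde{\mathcal{G}}]_{p} & \stackrel{\iota_{p}\circ I_{p}}{\longrightarrow} & H^{*}(BT^{n})
\end{array}
\end{equation*}
where $I_{p}$ and $\iota_{p}$ are the isomorphisms of Lemma~\ref{restricted-to-pt}.

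The commutativity is a direct check on generators: for each hyperplane $L \in \mathbf{L}$, the left-down-right path sends $L \mapsto \rho_{p}(L) \mapsto \tau_{L}(p)$ (by the very definition of $\iota_{p}$, combined with the fact that $\rho_{p}(L) = 0$ whenever $p \notin \mathcal{V}^{L}$), while the top-right-down path also sends $L \mapsto \tau_{L} \mapsto \tau_{L}(p)$. So the two agree on generators, hence on the entire ring.

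Now suppose $A \in \Z[\widetilde{\mathcal{G}}]$ lies in $\ker \Psi'$. Then $\mathrm{ev}_{p}(\Psi'(A)) = 0$ for every $p \in \mathcal{V}$, so by commutativity $(\iota_{p} \circ I_{p})(\rho_{p}(A)) = 0$. Since $\iota_{p} \circ I_{p}$ is an isomorphism by Lemma~\ref{restricted-to-pt}, we obtain $\rho_{p}(A) = 0$ for every $p$, i.e., $\rho(A) = 0$. Finally, injectivity of $\rho$ (Lemma~\ref{rho_inje}) forces $A = 0$, proving $\Psi'$ is injective.

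Of the ingredients, the genuinely nontrivial one has already been done, namely the proof of Lemma~\ref{rho_inje}, which used assumption (2) implicitly through the definition of the ideal $\langle \prod_{L \in \mathbf{L}'} L \mid \mathbf{L}' \in \mathbf{I}(\mathbf{L}) \rangle$. The step contained in this lemma itself is essentially a diagram chase together with a matching of generators, so I do not expect any real obstacle here; the only mildly delicate point is checking that the Thom class formula (``$\tau_{L}(p) = 0$ off $\mathcal{V}^{L}$ and $\tau_{L}(p) = \widetilde{\alpha}(n_{H}(p))$ on $\mathcal{V}^{L}$'') matches, on the nose, the isomorphism $\iota_{p}$ which sends $L_{p,i} \mapsto \widetilde{\alpha}(\epsilon_{i}^{+}(p))$ of Lemma~\ref{restricted-to-pt}, and this is immediate from how $\iota_{p}$ was defined.
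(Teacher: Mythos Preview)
Your proof is correct and follows essentially the same approach as the paper: set up the commutative square relating $\Psi'$ to the localization map $\rho$ via the evaluation maps and the isomorphisms $I_p$ of Lemma~\ref{restricted-to-pt}, then deduce injectivity of $\Psi'$ from the injectivity of $\rho$ (Lemma~\ref{rho_inje}). One small correction to your closing remark: the proof of Lemma~\ref{rho_inje} does not in fact use assumption~(2) of Theorem~\ref{main-theorem1} (cf.\ Remark~\ref{rem_after_surj}); that assumption is only needed for surjectivity of $\Psi'$.
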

\begin{proof}
We first define 
\begin{align*}
\rho':H^{*}(\mathcal{\widetilde{G}})\to \bigoplus_{p\in \mathcal{V}}H^{*}(BT^{n})
\end{align*}
by the homomorphism 
\begin{align*}
\rho'(f)=\bigoplus_{p\in \mathcal{V}} f(p).
\end{align*}
Then it is easy to check that the following diagram is commutative:
\begin{eqnarray*}
\xymatrix{
\Z[\mathcal{\widetilde{G}}]\ar[r]^(.4){\rho} \ar[d]^{\Psi'} & \bigoplus_{p\in\mathcal{V}}\Z[\mathcal{\widetilde{G}}]_{p} \ar[d]^{\oplus_{p}I_{p}} \\
H^{*}(\mathcal{\widetilde{G}})\ar[r]^(.4){\rho'} & \bigoplus_{p\in \mathcal{V}}H^{*}(BT^{n})
}
\end{eqnarray*}
where $I_{p}:\mathbb{Z}[\widetilde{\mathcal{G}}]_{p}\to H^{*}(BT^{n})$ is the isomorphism defined by $I_{p}(L):=\tau_{L}(p)$ in Lemma~\ref{restricted-to-pt}.
Because of Lemma~\ref{rho_inje}, $\rho$ is injective. 
Therefore, the composition map $\oplus_{p}I_{p}\circ \rho$ is injective.
Because of the commutativity of the diagram, 
$\rho'\circ \Psi'=\oplus_{p}I_{p}\circ \rho$ is also injective.
Consequently, $\Psi'$ is injective. 
\end{proof}

\subsection{The surjectivity of $\Psi'$}
\label{sect:5.4}

We next prove the surjectivity of $\Psi'$. 
In order to prove it, we will define an ideal $I(K)$ of $H^{*}(BT^{n})$,
where $K$ is the non-empty intersection of some hyperplanes, say $K=L_{1}\cap\cdots \cap L_{k}(\not=\emptyset)$.
Note that the graph $K$ is connected because of the assumption (2) of Theorem~\ref{main-theorem1}.
Because $L_{1},\ldots, L_{k}$ defines hyperplanes $\mathcal{L}_{1},\ldots, \mathcal{L}_{k}$ (respectively) of $\mathcal{G}=(\Gamma,\ \alpha,\ \nabla)$, 
the subgraph $K$ is also defines a $(2n-2k)$-valent ($T^{*}\mathbb{C}^{n-k}$-modeled) GKM subgraph of $\mathcal{G}$, say 
$\mathcal{K}:=(K,\ \alpha^{K},\nabla^{K})$.
Now we may define its $x$-forgetful graph, i.e., 
for $\widetilde{\alpha}^{K}:=F\circ \alpha^{K}$, the labeled graph  
\begin{align*}
\widetilde{\mathcal{K}}:=(K,\ \widetilde{\alpha}^{K},\nabla^{K}).
\end{align*}
We define an ideal $I(K)$ (in $H^{*}(BT^{n})$) on $K$ as follows:
\begin{align*}
I(K)=\langle \widetilde{\alpha}^{K}(\epsilon)(=\widetilde{\alpha}(\epsilon))\ |\ \epsilon\in \mathcal{E}^{K}\rangle,
\end{align*}
that is, this ideal is generated by all $x$-forgetful axial functions of edges and legs in $K$.
The following lemma, which will be used to prove the surjectivity of $\Psi'$, holds for $I(K)$.
\begin{lemma}
\label{key-lemma-surj-Psi'}
Let $f$ be an element in $H^{*}(\mathcal{\widetilde{G}})$.
If $f(p)\not\in I(K)$ for some $p\in \mathcal{V}^{K}$, then $f(q)\not\in I(K)$ for all $q\in \mathcal{V}^{K}$.
\end{lemma}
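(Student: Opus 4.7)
The plan is to exploit the congruence relation defining $H^{*}(\widetilde{\mathcal{G}})$ together with the connectedness of $K$, which is guaranteed by assumption (2) of Theorem~\ref{main-theorem1}. The statement to prove is essentially that the class of $f$ modulo $I(K)$ is constant on $\mathcal{V}^{K}$.

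First I would observe the following key fact: for every edge $\epsilon \in E^{K}$, the value $\widetilde{\alpha}(\epsilon)$ lies in $I(K)$ by the very definition of $I(K)$ as the ideal generated by $\widetilde{\alpha}(\mathcal{E}^{K})$. Since $f \in H^{*}(\widetilde{\mathcal{G}})$, the congruence relation at $\epsilon$ gives
\begin{equation*}
f(i(\epsilon)) - f(t(\epsilon)) = g_{\epsilon}\, \widetilde{\alpha}(\epsilon)
\end{equation*}
for some $g_{\epsilon} \in H^{*}(BT^{n})$, and hence $f(i(\epsilon)) - f(t(\epsilon)) \in I(K)$. Consequently, for any $\epsilon \in E^{K}$, the values $f(i(\epsilon))$ and $f(t(\epsilon))$ are congruent modulo $I(K)$; in particular, one lies in $I(K)$ if and only if the other does.

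Next I would use the hypothesis that $K$ is connected: since $K = L_{1} \cap \cdots \cap L_{k}$ is non-empty and, by assumption (2) of Theorem~\ref{main-theorem1}, connected as a graph, there exists a path $p = p_{0}, p_{1}, \ldots, p_{N} = q$ in $K$, where each consecutive pair $p_{j-1}, p_{j}$ is joined by an edge $\epsilon_{j} \in E^{K}$. Applying the preceding observation step by step along this path yields
\begin{equation*}
f(p) \equiv f(p_{1}) \equiv \cdots \equiv f(q) \pmod{I(K)}.
\end{equation*}
Therefore, if $f(p) \notin I(K)$, then $f(q) \notin I(K)$ for every $q \in \mathcal{V}^{K}$, which is the claim.

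There is no genuine obstacle beyond checking that the path can be chosen inside $E^{K}$ (not involving legs of $K$), but this is immediate from the graph-theoretic notion of connectedness applied to the vertex set of $K$. The whole argument amounts to a straightforward propagation of the congruence relation along the connected subgraph $K$.
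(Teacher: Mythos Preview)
Your proof is correct and follows essentially the same approach as the paper's: both use the connectedness of $K$ (guaranteed by assumption (2) of Theorem~\ref{main-theorem1}) to find a path in $E^{K}$ between any two vertices, and then propagate the congruence relation along that path to show $f(p)-f(q)\in I(K)$. The only cosmetic difference is that the paper phrases this as a proof by contradiction (assuming some $q$ with $f(q)\in I(K)$ and deriving $f(p)\in I(K)$), whereas you argue directly that the class of $f$ modulo $I(K)$ is constant on $\mathcal{V}^{K}$.
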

\begin{proof}
Let $K:=(\mathcal{V}^{K},\mathcal{E}^{K})$. 
For $f\in H^{*}(\widetilde{\mathcal{G}})$, 
we assume that $f(p)\not\in I(K)$ for some $p\in \mathcal{V}^{K}$.
We also assume that there exists a vertex $q\in \mathcal{V}^{K}$ such that  $f(q)\in I(K)$.
Since $K$ is connected, there is a path in $K$ from $q$ to $p$, which consists of edges 
\begin{align*}
qr_{1},\ r_{1}r_{2},\ \cdots,\ r_{s-1}r_{s},\ r_{s}p\in E^{K}\subset \mathcal{E}^{K}.
\end{align*}
Because of
the congruence relations in $H^{*}(\widetilde{\mathcal{G}})$, 
there are $A_{1},\ldots, A_{s+1}\in H^{*}(BT^{n})$ such that 
\begin{align*}
 &f(q)-f(p) \\
=&(f(q)-f(r_{1}))+(f(r_{1})-f(r_{2}))+\cdots +(f(r_{s-1})-f(r_{s}))+(f(r_{s})-f(p)) \\
=&A_{1}\widetilde{\alpha}(qr_{1})+A_{2}\widetilde{\alpha}(r_{1}r_{2})\cdots +A_{s}\widetilde{\alpha}(r_{s-1}r_{s})+A_{s+1}\widetilde{\alpha}(r_{s}p).
\end{align*}
Therefore, by the definition of $I(K)$, we have 
\begin{align*}
 f(q)-f(p) \in  I(K).
\end{align*}
However, 
since $f(q),\ A_{1}\widetilde{\alpha}(qr_{1}), \cdots,\  A_{s+1}\widetilde{\alpha}(r_{s}p)\in I(K)$, we have $f(p)\in I(K)$.
This gives a contradiction.
This established that if $f(p)\not\in I(K)$ then $f(q)\not\in I(K)$ for all $q\in \mathcal{V}^{K}$.
\end{proof}

By using this lemma, we can prove the surjectivity of $\Psi':\mathbb{Z}[\widetilde{\mathcal{G}}]\to H^{*}(\widetilde{\mathcal{G}})$.
\begin{lemma}
\label{Psi'_surj}
$\Psi'$ is surjective.
\end{lemma}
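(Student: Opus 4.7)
The plan is to prove surjectivity by iteratively reducing a given $f\in H^{*}(\widetilde{\mathcal{G}})$ to zero modulo the image of $\Psi'$, one vertex at a time. After decomposing $f$ by degree, I may assume it is homogeneous of degree $2d$. The case $d=0$ is handled separately: since any $\mathbb{Z}$-valued element of $H^{*}(\widetilde{\mathcal{G}})$ must be constant on the connected graph $\Gamma$ (the congruence relations become strict equalities in degree $0$), $f$ is a constant integer and lies in $\Psi'(\mathbb{Z})$. For $d\geq 1$, fix a linear ordering $v_1,\dots,v_N$ of $\mathcal{V}$, and construct $Q_1,\dots,Q_N\in\mathbb{Z}[\widetilde{\mathcal{G}}]$ inductively so that $f^{(i)}:=f-\Psi'\!\bigl(\sum_{k\leq i}Q_k\bigr)$ vanishes on $\{v_1,\dots,v_i\}$; then $f^{(N)}=0$ gives $f=\Psi'(\sum Q_k)$. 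For the base step $i=1$, Lemma~\ref{restricted-to-pt} identifies $H^{*}(BT^{n})$ with the polynomial ring $\mathbb{Z}[\tau_{L_{v_1,1}}(v_1),\dots,\tau_{L_{v_1,n}}(v_1)]$, so I write $f(v_1)$ as a polynomial in these Thom classes and take $Q_1$ to be the same polynomial in the variables $L_{v_1,k}$.

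For the inductive step $i\geq 2$, assuming $f^{(i-1)}(v_j)=0$ for all $j<i$, I apply Lemma~\ref{key-lemma-surj-Psi'} to the flat $K_{ij}$ defined as the intersection of all hyperplanes in $\mathbf{L}$ passing through both $v_i$ and $v_j$, which is connected by assumption~(2) of Theorem~\ref{main-theorem1}. Since $f^{(i-1)}(v_j)=0\in I(K_{ij})$, the contrapositive of that lemma yields $f^{(i-1)}(v_i)\in I(K_{ij})$. The crucial technical point is to identify $I(K_{ij})$ concretely inside $H^{*}(BT^{n})\cong\mathbb{Z}[\tau_{L_{v_i,1}}(v_i),\dots,\tau_{L_{v_i,n}}(v_i)]$ as the monomial ideal
\[
I(K_{ij})=\langle\tau_M(v_i)\mid M\in\mathbf{L},\ M\ni v_i,\ M\not\supseteq K_{ij}\rangle=\langle\tau_M(v_i)\mid M\ni v_i,\ M\not\ni v_j\rangle,
\]
the last equality using that for $M\ni v_i$, $M\supseteq K_{ij}$ iff $M\ni v_j$. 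This identification is the main obstacle: one inclusion follows directly from the $T^{*}\mathbb{C}^{n-k}$-modeled structure at $v_i$, which shows that the axial function values of edges of $K_{ij}$ at $v_i$ are exactly these $\tau_M(v_i)$; the reverse inclusion requires an induction along paths in the connected graph $K_{ij}$, using the congruence relation to transfer generators at other vertices to those at $v_i$ modulo earlier-established generators of the ideal.

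Granted this identification, intersecting over $j<i$ in the polynomial ring shows that $f^{(i-1)}(v_i)$ lies in the monomial ideal whose generators are monomials $\prod_k\tau_{L_{v_i,k}}(v_i)^{\alpha_k}$ whose support meets $S_j:=\{k\mid L_{v_i,k}\not\ni v_j\}$ for every $j<i$. Expanding $f^{(i-1)}(v_i)=\sum_\alpha B_\alpha\prod_k\tau_{L_{v_i,k}}(v_i)^{\alpha_k}$ with each $\alpha$ of this type, lifting each coefficient $B_\alpha$ to $\widetilde{B}_\alpha\in\mathbb{Z}[L_{v_i,1},\dots,L_{v_i,n}]$ via Lemma~\ref{restricted-to-pt}, and setting $Q_i:=\sum_\alpha\widetilde{B}_\alpha\prod_kL_{v_i,k}^{\alpha_k}\in\mathbb{Z}[\widetilde{\mathcal{G}}]$ produces the required polynomial. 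By construction $\Psi'(Q_i)(v_i)=f^{(i-1)}(v_i)$, and for each $j<i$ the support property of $\alpha$ supplies some $k$ with $\alpha_k>0$ and $\tau_{L_{v_i,k}}(v_j)=0$, killing every monomial at $v_j$ and giving $\Psi'(Q_i)(v_j)=0$. Iterating this construction through all $N$ vertices yields the desired preimage of $f$ under $\Psi'$, completing the proof of surjectivity.
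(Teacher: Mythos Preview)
Your proof is correct and uses the same key lemma (Lemma~\ref{key-lemma-surj-Psi'}) as the paper, but you apply it in a genuinely different way. The paper works ``monomial by monomial'': starting from an element $h$ with $h(p)\neq 0$, it picks one monomial $a\,\tau_{L_{p,1}}^{a_1}\cdots\tau_{L_{p,b}}^{a_b}(p)$ of $h(p)$, sets $K=\bigcap_{i=1}^{b}L_{p,i}$, and applies Lemma~\ref{key-lemma-surj-Psi'} in the \emph{direct} direction ($h(p)\notin I(K)$ implies $h(q)\neq 0$ for all $q\in K$) to see that subtracting this monomial never enlarges the zero set $Z(h)$; iterating shrinks the number of monomials at $p$ and eventually increases $Z(h)$. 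You instead process vertices in a fixed order and, for each pair $(v_i,v_j)$ with $j<i$, take $K_{ij}=\bigcap\{L\in\mathbf{L}:v_i,v_j\in L\}$ and apply the lemma in the \emph{contrapositive} direction ($f^{(i-1)}(v_j)=0\in I(K_{ij})$ forces $f^{(i-1)}(v_i)\in I(K_{ij})$). The extra work in your route is the identification of $I(K_{ij})$ as the monomial ideal $\langle \tau_M(v_i)\mid M\ni v_i,\ M\not\ni v_j\rangle$; your path-induction sketch for this is valid (the congruence relation along an edge of $K_{ij}$ transports generators modulo an element already in the ideal, and connectedness of $K_{ij}$ from assumption~(2) lets you reach every vertex). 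What you gain is a cleaner one-shot construction of the correction $Q_i$ at each vertex rather than monomial-by-monomial bookkeeping; what the paper gains is that it never needs to compute $I(K)$ explicitly, only to observe that a single monomial witnesses $h(p)\notin I(K)$. Two minor remarks: in your monomial expansion the coefficients $B_\alpha$ are already integers, so the ``lifting'' to $\widetilde{B}_\alpha$ is trivial; and when no hyperplane contains both $v_i$ and $v_j$, your $K_{ij}$ is the whole graph $\Gamma$, but then $I(K_{ij})$ is the full maximal ideal and the conclusion $f^{(i-1)}(v_i)\in I(K_{ij})$ is automatic for homogeneous $f$ of positive degree, so this edge case causes no trouble.
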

\begin{proof}
Let $f\in H^{*}(\widetilde{\mathcal{G}})$. 
For some $p\in \mathcal{V}$, 
we assume that $f(p)\in H^{*}(BT^{n})$ has a non-zero constant term $k\in \Z-\{0\}$, i.e.,
\begin{align*}
f(p)=k+g(p) 
\end{align*}
where $g(p)\in H^{>0}(BT^{n})\cup \{0\}$.
Note that $H^{*}(BT^{n})\simeq \mathbb{Z}[x_{1},\ldots, x_{n}]$, where $\deg x_{i}=2$ for all $i=1,\ldots n$.
Because $f\in H^{*}(\mathcal{\widetilde{G}})$ satisfies the congruence relation, there exists $g\in H^{>0}(\mathcal{\widetilde{G}})\cup \{0\}$ such that for all $q\in \mathcal{V}$ we may write 
\begin{align*}
f(q)=k+g(q), 
\end{align*}
where $H^{>0}(\mathcal{\widetilde{G}})\cup \{0\}$ is the set of 
$g\in H^{*}(\mathcal{\widetilde{G}})$ whose constant term is $0$, i.e., for all $p\in \mathcal{V}$ the constant term of the polynomial $g(p)\in \mathbb{Z}[x_{1},\ldots, x_{n}]$ is $0$. 
This shows that for all $f\in H^{*}(\widetilde{\mathcal{G}})$
there exists the constant term $k$ and $g\in H^{>0}(\mathcal{\widetilde{G}})\cup \{0\}$ such that 
\begin{align*}
f=k+g.
\end{align*}
Therefore,   
we can take $k\in \Z\subset \Z[\mathcal{\widetilde{G}}]$ such that 
\begin{align*}
f=\Psi'(k)+g.
\end{align*}

Take $g=f-\Psi'(k)$.
Then $g(p)\in H^{>0}(BT^{n})\cup \{0\}$ for all $p\in \mathcal{V}$.
Now we may put 
\begin{align*}
Z(g)=\{p\in \mathcal{V}\ |\ g(p)=0\}.
\end{align*} 
We first assume that $Z(g)=\emptyset$. 
Then $g(p)\not= 0$ for all $p\in \mathcal{V}$.
Note that by Lemma \ref{restricted-to-pt} we have   
\begin{align*}
g(p)(\not=0)\in H^{*}(BT^{n})=\Z[\tau_{L_{p,1}}(p),\ \cdots,\ \tau_{L_{p,n}}(p)], 
\end{align*}
where $L_{p,i}$, $i=1,\ldots,n$, are the hyperplanes such that
$p\in \mathcal{V}^{L_{p,i}}$.  This also shows that for the fixed vertex
$p\in \mathcal{V}$, we may take an element
\begin{align*}
A\in \Z[\mathcal{\widetilde{G}}]
\end{align*} 
such that 
\begin{align*}
\Psi'(A)(p)=g(p).
\end{align*}
Because $g-\Psi'(A)\in H^{*}(\widetilde{\mathcal{G}})$ and $g(p)-\Psi'(A)(p)=0$, we have that 
\begin{align*}
p\in Z(g-\Psi'(A)).
\end{align*}
Next, 
by taking $h=g-\Psi'(A)=f-\Psi'(k+A)$, 
we may assume that $Z(h)\not=\emptyset$.
Take $p\in \mathcal{V}\backslash Z(h)$, i.e., $h(p)\not=0$.
Let $a\tau_{L_{1}}^{a_{p,1}}\cdots\tau_{L_{n}}^{a_{p,n}}(p)$ be a monomial appearing in $h(p)$, where $a$ is a non-zero integer,
$p\in \mathcal{V}^{L_{p,i}}$ and $a_{i}\ge 0$ ($i=1,\ \cdots,\ n$).
Since $h(p)\in H^{>0}(BT^{n})$, we may assume that 
\begin{align*}
a_{1},\ \cdots,\ a_{b}\not=0,\quad a_{b+1}=\cdots =a_{n}=0.
\end{align*}
Put $K=\cap_{i=1}^{b}L_{p,i}$.
Then 
we have
\begin{align*}
h(p)\not\in I(K)=\langle \widetilde{\alpha}^{K}(\epsilon)\ |\ \epsilon\in \mathcal{E}^{K}\rangle\subset H^{*}(BT^{n})
\end{align*} 
because $h(p)$ contains the non-zero monomial $a\tau_{L_{p,1}}^{a_{1}}\cdots\tau_{L_{p,b}}^{a_{b}}(p)$ such that 
$\tau_{L_{p,i}}(p)$ ($i=1,\ \cdots,\ b$) is defined by the axial function of the normal edge or leg of $K$ on $p$ (which are not the edges or legs in $\mathcal{E}^{K}$).
Therefore, by  Lemma \ref{key-lemma-surj-Psi'}, 
we have that for all $q\in \mathcal{V}^{K}$,
\begin{align*}
h(q)\not\in I(K).
\end{align*}
In particular, $h(q)\not=0$ for all $q\in \mathcal{V}^{K}$.
Let $r\not\in \mathcal{V}^{K}$.
Because $K=L_{p,1}\cap \cdots \cap L_{p,b}$, 
 we see that 
\begin{align*}
a\tau_{L_{p,1}}^{a_{1}}\cdots\tau_{L_{p,b}}^{a_{b}}(r)=0.
\end{align*}
Therefore, if we 
put 
\begin{align*}
h'=h-a\tau_{L_{p,1}}^{a_{1}}\cdots\tau_{L_{p,b}}^{a_{b}}=h-\Psi'(aL_{p,1}^{a_{1}}\cdots L_{p,b}^{a_{b}})=f-\Psi'(k+A+aL_{p,1}^{a_{1}}\cdots L_{p,b}^{a_{b}}),
\end{align*}
 then $h'(r)=h(r)$ for all $r\not\in \mathcal{V}^{K}$.
Namely, $h(q)\not=0$ for all $q\in \mathcal{V}^{K}$ and $h'(r)=h(r)$ for all $r\not\in \mathcal{V}^{K}$.
This shows that 
\begin{align*}
Z(h')\supset Z(h).
\end{align*}
Note that by the definition of $h'$, the number of monomials in $h'(p)$ is strictly smaller than that in $h(p)$.
If $h'(p)=0$, then we have $Z(h')\supsetneq Z(h).$
If $h'(p)\not=0$, then  we may apply the same argument as above for $h'\in H^{*}(\widetilde{\mathcal{G}})$ and the vertex $p\in \mathcal{V}$ again because $Z(h')\not=\emptyset$.
Then we have that there exists hyperplanes $L_{p,i_{1}},\ldots, L_{p,i_{c}}$ in $\{L_{p,1},\ldots, L_{p,n}\}$ and a non-zero integer $a'$ such that 
\begin{align*}
h''=h'-\Psi'(a'L_{p,i_{1}}^{a'_{1}}\cdots L_{p,i_{c}}^{a'_{c}})
\end{align*}
which satisfies that 
\begin{align*}
Z(h'')\supset Z(h') 
\end{align*}
and 
the number of monomials in $h''(p)$ is strictly smaller than that in $h'(p)$,
where $a'_{1},\ldots, a'_{c}$ are positive integers.
If $h''(p)\not=0$, then we repeat the same argument again.
Because the number of monomials in $h(p)$ is strictly smaller than smaller in each step, finally we have an element 
\begin{align*}
B\in \Z[\mathcal{\widetilde{G}}]
\end{align*}
 such that 
\begin{align*}
Z(h-\Psi'(B))\supsetneq Z(h).
\end{align*}
Moreover repeating this procedure, we can find an element $C\in \Z[\mathcal{\widetilde{G}}]$ such that $Z(h-\Psi'(C))=\mathcal{V}$.
This shows that 
\begin{align*}
h-\Psi'(C)=f-\Psi'(k+A+C)=0.
\end{align*} 
Therefore,
for all $f\in H^{*}(\widetilde{\mathcal{G}})$ there exists an element 
$k+A+C\in \mathbb{Z}[\widetilde{\mathcal{G}}]$ such that 
$f=\Psi'(k+A+C)$.
This establishes that $\Psi'$ is surjective.
\end{proof}

Consequently $\Psi'$ is an isomorphic map by Lemma \ref{Psi'_inj} and \ref{Psi'_surj}, and we have 
\begin{align*}
H^{*}(\mathcal{\widetilde{G}})\simeq \Z[\mathcal{\widetilde{G}}].
\end{align*}
This establishes Theorem~\ref{main-theorem1-1}.


\begin{remark}
\label{rem_after_surj}
From the above argument, we know that the assumption (2) of
Theorem~\ref{main-theorem1} is not needed to prove the ``injectivity''
of $\Psi'$; however, it is needed to prove the ``surjectivity'' of
$\Psi'$.  Hence, the assumption (2) of Theorem~\ref{main-theorem1}
means that $H^{*}(\mathcal{\widetilde{G}})$
(resp. $H^{*}(\mathcal{G})$) is generated by elements of
$H^{2}(\mathcal{\widetilde{G}})$ (resp. $H^{2}(\mathcal{G})$), that
is, $\tau_{L}\in H^{2}(\mathcal{\widetilde{G}})$ (resp.
$\tau_{H},\ \chi\in H^{2}(\mathcal{G})$).  
For example, the Figure~\ref{sphere} shows the $T^{*}\mathbb{C}^{2}$-modeled GKM graph which does not satisfy the assumption (2) of
Theorem~\ref{main-theorem1} and its $x$-forgetful graph.
In this case, we need a generator which is not in $H^{2}(\widetilde{\mathcal{G}})$.
\end{remark}

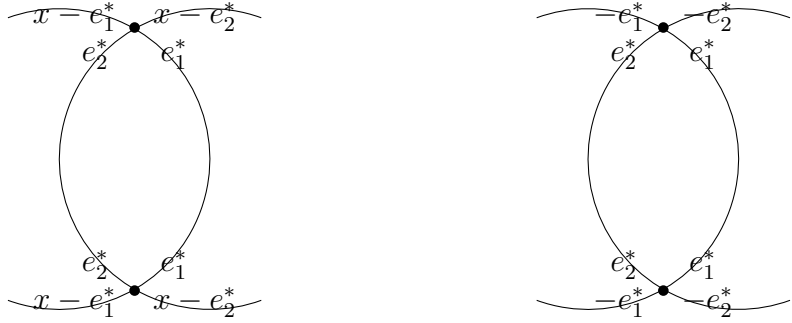
\begin{figure}[h]
\begin{tikzpicture}
\begin{scope}[xscale=1.0, yscale=1.0]
\draw ([shift={(1,0)}]70:2) arc[radius=2, start angle=70, end angle= 290];
\draw ([shift={(-1,0)}]-110:2) arc[radius=2, start angle=-110, end angle= 110];

\node[left] at (-0.2,1.4) {$e_{2}^{*}$};
\node[right] at (0.2,1.4) {$e_{1}^{*}$};
\node[left] at (-0.1,1.9) {$x-e_{1}^{*}$};
\node[right] at (0.1,1.9) {$x-e_{2}^{*}$};
\fill (0,1.75) circle (2pt);

\node[left] at (-0.2,-1.4) {$e_{2}^{*}$};
\node[right] at (0.2,-1.4) {$e_{1}^{*}$};
\node[left] at (-0.1,-1.9) {$x-e_{1}^{*}$};
\node[right] at (0.1,-1.9) {$x-e_{2}^{*}$};
\fill (0,-1.75) circle (2pt);

\end{scope}

\begin{scope}[xshift=200, xscale=1.0, yscale=1.0]
\draw ([shift={(1,0)}]70:2) arc[radius=2, start angle=70, end angle= 290];
\draw ([shift={(-1,0)}]-110:2) arc[radius=2, start angle=-110, end angle= 110];

\node[left] at (-0.2,1.4) {$e_{2}^{*}$};
\node[right] at (0.2,1.4) {$e_{1}^{*}$};
\node[left] at (-0.1,1.9) {$-e_{1}^{*}$};
\node[right] at (0.1,1.9) {$-e_{2}^{*}$};
\fill (0,1.75) circle (2pt);

\node[left] at (-0.2,-1.4) {$e_{2}^{*}$};
\node[right] at (0.2,-1.4) {$e_{1}^{*}$};
\node[left] at (-0.1,-1.9) {$-e_{1}^{*}$};
\node[right] at (0.1,-1.9) {$-e_{2}^{*}$};
\fill (0,-1.75) circle (2pt);

\end{scope}
\end{tikzpicture}
\caption{An example of the $T^{*}\mathbb{C}^{2}$-modeled graph and its $x$-forgetful graph which does not satisfy the assumption (2) in Theorem~\ref{main-theorem1}. Geometrically, this graph can be defined from $T^{*}S^{4}$ with the $T^{2}\times S^{1}$-action}.
\label{sphere}
\end{figure}


\section{Proof of Theorem~\ref{main-theorem1}}
\label{sect:6}

In this section, we prove Theorem~\ref{main-theorem1}.
We first recall the statement of Theorem~\ref{main-theorem1}.
Let $\mathcal{G}$ be a $2n$-valent $T^{*}\mathbb{C}^{n}$-modeled GKM graph and $\mathbf{L}=\{L_{1},\ \cdots,\ L_{m}\}$ be the set of all hyperplanes in $\mathcal{G}$.
Assume the following two assumptions for $\mathcal{G}$:
\begin{enumerate}
\item For each $L \in \mathbf{L}$, there exist the unique pair of the halfspace $H$ and its opposite side $\overline{H}$ such that $H \cap \overline{H}=L$;
\item For every subset $\mathbf{L}'\subset \mathbf{L}$, its intersection $\displaystyle\bigcap_{L\in\mathbf{L}'} L$ is empty or connected.
\end{enumerate}
Then, we will prove the following isomorphism:
\begin{align*}
\Z[\mathcal{G}]\simeq H^{*}(\mathcal{G}).
\end{align*}
Recall the ring homomorphism in Section~\ref{sect:4}
\begin{align*}
\Psi:\Z[\mathcal{G}]\to H^{*}(\mathcal{G})
\end{align*} 
is defined by 
\begin{align}
\label{def_of_Psi}
\Psi(X)=\chi,\quad \Psi(H)=\tau_{H}.
\end{align}
To prove Theorem~\ref{main-theorem1}, 
we claim that $\Psi$ is an isomorphism.

By the assumption (1) of Theorem~\ref{main-theorem1}, we can put the set of all halfspaces in $\mathcal{G}$ by 
\begin{eqnarray*}
\mathbf{H}=\{H_{1},\ \cdots,\ H_{m},\ \overline{H_{1}},\ \cdots,\ \overline{H_{m}}\},
\end{eqnarray*} 
where $L_{i}=H_{i}\cap \overline{H_{i}}$.
We prepare the following diagram:
\begin{align}
\label{diagram}
\xymatrix{
\Z[X,H_{1},\cdots,H_{m},\overline{H}_{1},\cdots,\overline{H_{m}}]\ar[r]^(.7){\hat{\pi}}\ar[d]^{\phi'} & \Z[\mathcal{G}]\ar[d]^{\Psi} \\
\Z[X,H_{1},\cdots,H_{m}]\ar[r]^(.7){\pi}\ar[d]^{\phi} & H^{*}(\mathcal{G})\ar[d]^{\widetilde{F}} \\
\Z[L_{1},\cdots,L_{m}]\ar[r]^(.7){\pi'} & H^{*}(\mathcal{\widetilde{G}})
}
\end{align}
where the maps in the diagram is defined as follows:
\begin{itemize}
\item $\hat{\pi}$ is the natural projection; 
\item $\Psi$ is defined by \eqref{def_of_Psi} as before;  
\item $\phi'$ is the surjective homomorphism induced from 
\begin{align*}
\phi'(X)=X,\quad \phi'(H_{i})=H_{i},\quad \phi'(\overline{H_{i}})=X-H_{i},\ i=1,\ldots, m;
\end{align*}
\item $\pi$ is the homomorphism induced from 
\begin{align*}
\pi(X)=\chi,\quad  \pi(H_{i})=\tau_{H_{i}},\ i=1,\ldots, m;
\end{align*}
\item $\widetilde{F}$ is the homomorphism defined by 
\begin{align*}
\widetilde{F}(f)(p):=F\circ f(p)
\end{align*}
 for $f\in H^{*}(\mathcal{G})$ and $p\in \mathcal{V}$, 
where $F:H^{*}(BT^{n}\times BS^{1})\to H^{*}(BT^{n})$ is the $x$-forgetful map for the fixed generator $x$ of $H^{2}(BS^{1})\simeq \Z x$;
\item $\phi$ is the surjective homomorphism induced from 
\begin{align*}
\phi(X)=0,\quad \phi(H_{i})=L_{i},\ i=1, \ldots, m;
\end{align*}
\item $\pi'$ is the homomorphism induced from 
\begin{align*}
\pi'(L_{i})=\tau_{L_{i}},\ i=1,\ldots, m.
\end{align*}
\end{itemize}
It easily follows from the definitions of homomorphisms as above and 
Lemma~\ref{existence_of_opposite-side} that  the top diagram is commutative.
By Section~\ref{sect:5.1}, we may choose $H_{1},\ldots, H_{m}$ as $\tau_{L_{i}}=F\circ \tau_{H_{i}}$ for $i=1,\ldots,m$.
Therefore, we may assume that the bottom diagram is also commutative.
Therefore, this diagram is commutative.

By the proof of Theorem~\ref{main-theorem1-1}, i.e., $\mathbb{Z}[\widetilde{\mathcal{G}}]\simeq H^{*}(\widetilde{\mathcal{G}})$, we have that 
$\pi'$ is surjective. 
This shows that $\pi'\circ \phi=\widetilde{F}\circ \pi$ is surjective; therefore, $\widetilde{F}$ is also surjective. 

\subsection{Surjectivity of $\Psi$}
\label{sect:6.1}

We first prove the surjectivity of $\Psi$.
By the commutativity of the top diagram, 
it is enough to prove that 
the homomorphism 
\begin{align*}
\pi:\mathbb{Z}[X,H_{1},\ldots, H_{m}]\to H^{*}(\mathcal{G}) 
\end{align*}
is surjective
(see Lemma~\ref{surj_pi}).
To do that, 
we will prove the following three lemmas.

The following first lemma is about the kernel of $\widetilde{F}:H^{*}(\mathcal{G})\to H^{*}(\widetilde{\mathcal{G}})$.
\begin{lemma}
\label{kernel_of_Ftilde}
Let $\chi$ be the element in $H^{*}(\mathcal{G})$ such that $\chi(p)=x$ for all $p\in \mathcal{V}$, where $x$ is the residual basis.
Then we have ${\rm Ker}\ \widetilde{F}=\langle \chi\rangle$,
i.e., the ideal generated by $\chi$. 
\end{lemma}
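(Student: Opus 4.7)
The plan is to show the two inclusions separately. The inclusion $\langle\chi\rangle\subset\ker\widetilde{F}$ is immediate: for any $g\in H^{*}(\mathcal{G})$ and any $p\in\mathcal{V}$ one has $\widetilde{F}(\chi\cdot g)(p)=F(x\cdot g(p))=0$, because $F$ is defined so that $F(x)=0$. So the content of the lemma is the reverse inclusion $\ker\widetilde{F}\subset\langle\chi\rangle$, which I would prove by explicit division by $x$.

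First I would set $R:=H^{*}(BT^{n})\simeq\mathbb{Z}[y_{1},\dots,y_{n}]$, so that $H^{*}(BT^{n}\times BS^{1})\simeq R[x]$, and let $f\in\ker\widetilde{F}$. For each $p\in\mathcal{V}$ the condition $F(f(p))=0$ means $f(p)\bigr|_{x=0}=0$ in $R[x]$, so there is a unique $h(p)\in R[x]$ with $f(p)=x\cdot h(p)$; this defines a function $h:\mathcal{V}\to R[x]$. The remaining task is to verify that $h$ lies in $H^{*}(\mathcal{G})$, after which $f=\chi\cdot h\in\langle\chi\rangle$ is immediate.

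The key observation needed to show $h\in H^{*}(\mathcal{G})$ is that each $\alpha(\epsilon)$ is coprime to $x$ in $R[x]$. By Definition~\ref{def_modeled_graph}(2) the vectors $\alpha(\epsilon_{1}^{+}),\dots,\alpha(\epsilon_{n}^{+}),x$ form a basis of $\mathfrak{t}^{*}_{\mathbb{Z}}\oplus\mathbb{Z}x$, so the projection of each $\alpha(\epsilon_{j}^{+})$ to $\mathfrak{t}^{*}_{\mathbb{Z}}$ is nonzero; writing $\alpha(\epsilon)=r_{\epsilon}+b_{\epsilon}x$ with $r_{\epsilon}\in R$, we get $r_{\epsilon}\neq 0$ for every edge $\epsilon$ (this also covers $\epsilon_{j}^{-}$, since $\alpha(\epsilon_{j}^{-})=x-\alpha(\epsilon_{j}^{+})$). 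Now, given the congruence relation $f(i(\epsilon))-f(t(\epsilon))=\alpha(\epsilon)\cdot g_{\epsilon}$ in $R[x]$, substitute $f=xh$ to obtain
\[
x\bigl(h(i(\epsilon))-h(t(\epsilon))\bigr)=(r_{\epsilon}+b_{\epsilon}x)\,g_{\epsilon}.
\]
Reducing modulo $x$ in the integral domain $R[x]$ yields $r_{\epsilon}\cdot g_{\epsilon}(0)=0$, hence $g_{\epsilon}(0)=0$ since $r_{\epsilon}\neq 0$, i.e.\ $g_{\epsilon}=x\,g_{\epsilon}'$. Cancelling $x$ gives $h(i(\epsilon))-h(t(\epsilon))=\alpha(\epsilon)\,g_{\epsilon}'$, which is exactly the congruence relation for $h$ on $\epsilon$.

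The argument is essentially mechanical, and the only subtle point -- which I would flag as the main obstacle -- is the coprimality of $\alpha(\epsilon)$ and $x$ needed to pull $x$ out of $g_{\epsilon}$. This is where the specific structure of a $T^{*}\mathbb{C}^{n}$-modeled GKM graph (the spanning condition in Definition~\ref{def_modeled_graph}(2)) is used; without it, $\alpha(\epsilon)$ could be a multiple of $x$ and the division would fail.
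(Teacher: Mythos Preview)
Your proof is correct and follows essentially the same approach as the paper's: both write $f(p)=x\cdot h(p)$ and then verify that $h$ satisfies the congruence relations by exploiting that $\alpha(\epsilon)$ is not a multiple of $x$, using the integral domain structure to cancel. Your treatment of the key step is in fact more explicit than the paper's (which simply asserts ``$\alpha(\epsilon)\neq x$\ldots\ because the polynomial ring is an integral domain''), but the underlying argument is identical.
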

\begin{proof}
Let $f\in {\rm Ker}\ \widetilde{F}$.
By the definition of $\widetilde{F}$, we have $\widetilde{F}(f)(p)=F\circ f(p)=0$ for all $p\in \mathcal{V}$.
Since $F:H^{*}(BT^{n}\times BS^{1})=\Z[\alpha_{1},\ \cdots,\ \alpha_{n},\ x]\to \Z[\beta_{1},\ \cdots,\ \beta_{n}]=H^{*}(BT^{n})$ is defined by
$F(x)=0$ and $F(\alpha_{i})=\beta_{i}$ for all $i=1,\ \cdots,\ n$,
we have 
\begin{align*}
f(p)\in {\rm Ker}\ F=\langle x\rangle\subset H^{*}(BT^{n}\times BS^{1}).
\end{align*}
Therefore, for every $p\in \mathcal{V}$, there exists a polynomial $g(p)\in H^{*}(BT^{n}\times BS^{1})$ such that  
\begin{align*}
f(p)=g(p)x.
\end{align*}
Because $f\in H^{*}(\mathcal{G})$, it satisfies the congruence relation
\begin{align*}
f(i(\epsilon))-f(t(\epsilon))=g(i(\epsilon))x-g(t(\epsilon))x=(g(i(\epsilon))-g(t(\epsilon)))x\equiv 0\ ({\rm mod}\ \alpha(\epsilon))
\end{align*}
for every edge $\epsilon$.
Because $x$ is the residual basis, by definition of
$T^{*}\mathbb{C}^{n}$-modeled GKM graph (see Definition \ref{def_modeled_graph}), we see that $\alpha(\epsilon)\not=x$ for every edge $\epsilon\in E$.
Hence, because the polynomial ring is an integral domain, 
we have 
\begin{align*}
g(i(\epsilon))-g(t(\epsilon))\equiv 0\ ({\rm mod}\ \alpha(\epsilon))
\end{align*}
for every edge $\epsilon$. 
This implies that  
$g\in H^{*}(\mathcal{G})$.
Therefore for all $f\in {\rm Ker}\ \widetilde{F}$, there exists an element $g\in H^{*}(\mathcal{G})$ such that $f=g\chi$.
Hence, ${\rm Ker}\ \widetilde{F}\subset \langle \chi \rangle$.
On the other hand, we can easily check that ${\rm Ker}\ \widetilde{F}\supset \langle \chi \rangle$.
This establishes that ${\rm Ker}\ \widetilde{F}=\langle \chi \rangle$.
\end{proof}

The following second lemma is about the degree-wise decomposition of an element in $H^{*}(\mathcal{G})$. 

\begin{lemma}
\label{deg_decomp_graph_cohom}
For every $f\in H^{*}(\mathcal{G})$, 
there exists a non-negative integer $l$ and an element $f_{2i}\in H^{2i}(\mathcal{G})$ for each $0\le i\le l$ which satisfy 
\begin{align*}
f=f_{0}+f_{2}+\cdots +f_{2l},
\end{align*}
where $H^{2i}(\mathcal{G})$ consists of the element, say $h_{2i}$, which satisfies $h_{2i}(p)\in H^{2i}(BT^{n}\times BS^{1})$ for all $p\in \mathcal{V}$. 
\end{lemma}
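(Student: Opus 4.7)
\medskip

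\noindent\textbf{Proof proposal for Lemma~\ref{deg_decomp_graph_cohom}.}
The plan is to decompose $f$ vertex by vertex into its homogeneous pieces with respect to the grading on $H^{*}(BT^{n}\times BS^{1})\simeq \mathbb{Z}[\alpha_{1},\ldots,\alpha_{n},x]$, and then verify that each resulting homogeneous piece still satisfies the congruence relation on every edge. Since $\mathcal{V}$ is finite and each $f(p)$ is an honest polynomial, this decomposition will terminate at a uniform degree $2l$.

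\medskip

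\noindent\emph{Step 1: Pointwise homogeneous decomposition.} For each $p\in\mathcal{V}$, write $f(p)=\sum_{i\ge 0}f(p)_{2i}$ where $f(p)_{2i}\in H^{2i}(BT^{n}\times BS^{1})$ is the homogeneous component of degree $2i$. Only finitely many of these are nonzero; let $l_{p}$ be the largest index with $f(p)_{2l_{p}}\ne 0$. Since $\mathcal{V}$ is finite, set $l:=\max_{p\in\mathcal{V}}l_{p}$, so that $f(p)=\sum_{i=0}^{l}f(p)_{2i}$ for every $p$. Now define the candidate components
\begin{align*}
f_{2i}:\mathcal{V}\longrightarrow H^{2i}(BT^{n}\times BS^{1}),\qquad f_{2i}(p):=f(p)_{2i}.
\end{align*}

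\medskip

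\noindent\emph{Step 2: Checking the congruence relation degree by degree.} Fix an edge $\epsilon\in E$. Since $f\in H^{*}(\mathcal{G})$, there is some $A_{\epsilon}\in H^{*}(BT^{n}\times BS^{1})$ with
\begin{align*}
f(i(\epsilon))-f(t(\epsilon))=A_{\epsilon}\cdot\alpha(\epsilon).
\end{align*}
Decomposing $A_{\epsilon}=\sum_{j\ge 0}(A_{\epsilon})_{2j}$ into homogeneous pieces and using that $\alpha(\epsilon)$ is itself homogeneous of degree $2$, the degree-$2i$ part of the right-hand side equals $(A_{\epsilon})_{2(i-1)}\cdot\alpha(\epsilon)$ for $i\ge 1$ (and vanishes for $i=0$). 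Matching degrees on the left gives
\begin{align*}
f_{2i}(i(\epsilon))-f_{2i}(t(\epsilon))=(A_{\epsilon})_{2(i-1)}\cdot\alpha(\epsilon)\equiv 0\pmod{\alpha(\epsilon)},
\end{align*}
so $f_{2i}\in H^{2i}(\mathcal{G})$. Summing over $i=0,\ldots,l$ recovers $f$, which is the required decomposition.

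\medskip

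\noindent\emph{Expected obstacles.} There is essentially no obstacle: the argument only uses that $\alpha(\epsilon)$ is a homogeneous element of degree $2$ and that $H^{*}(BT^{n}\times BS^{1})$ is an integrally graded polynomial ring, so homogeneous components split off cleanly from any divisibility relation. The only mild subtlety is ensuring uniformity of the bound $l$, which is immediate from finiteness of $\mathcal{V}$. No use of the $T^{*}\mathbb{C}^{n}$-modeled structure, of hyperplanes, or of the assumptions (1), (2) of Theorem~\ref{main-theorem1} is required; the lemma holds for the graph equivariant cohomology of any GKM graph with legs.
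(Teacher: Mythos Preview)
Your proof is correct and follows essentially the same approach as the paper's: both decompose $f(p)$ vertexwise into homogeneous pieces, take a uniform bound $l$ by finiteness of $\mathcal{V}$, and then verify the congruence relation degree by degree using that $\alpha(\epsilon)\in H^{2}(BT^{n}\times BS^{1})$ is homogeneous. Your remark that no special features of the $T^{*}\mathbb{C}^{n}$-modeled structure are needed here is also accurate.
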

\begin{proof}
Since $f(p)\in H^{*}(BT^{n}\times BS^{1})$, for every $p\in \mathcal{V}$ 
there exists a non-negative integer $l(p)$ and an element $f_{2i}(p)\in H^{2i}(BT^{n}\times BS^{1})$ such that 
\begin{align*}
f(p)=f_{0}(p)+\cdots +f_{2l(p)}(p).
\end{align*} 
If we take the maximal integer $l=\max \{l(p)\ |\ p\in \mathcal{V}\}$, then we may write 
\begin{align*}
f(p)=f_{0}(p)+\cdots +f_{2l}(p).
\end{align*} 
for all $p\in \mathcal{V}$.
Therefore, we can define the map 
$f_{2i}:\mathcal{V}\to H^{2i}(BT^{n}\times BS^{1})$ by $p\mapsto f_{2i}(p)$ for all $0\le i \le l$.
We claim that $f_{2i}\in H^{*}(\mathcal{G})$.
Because $f$ satisfies the congruence relation for all edges $\epsilon$, we see that
\begin{align}
\label{cong-rel-lem6.2}
f(i(\epsilon))-f(t(\epsilon))=(f_{0}(i(\epsilon))-f_{0}(t(\epsilon)))+\cdots +(f_{2l}(i(\epsilon))-f_{2l}(t(\epsilon)))=A\alpha(\epsilon)
\end{align}
for some $A\in H^{*}(BT^{n}\times BS^{1})$.
Moreover,  
there is a monomial $A_{2i}\in H^{2i}(BT^{n}\times BS^{1})$ for each $0\le i\le l-1$ such that 
\begin{align*}
A=A_{0}+\cdots +A_{2l-2}.
\end{align*}
Comparing the same degree monomials of both sides in \eqref{cong-rel-lem6.2}, we have 
\begin{align*}
f_{2i}(i(\epsilon))-f_{2i}(t(\epsilon))=A_{2i-2}\alpha(\epsilon)\equiv 0 \mod \alpha(\epsilon).
\end{align*}
Because this relation satisfies for all $\epsilon\in E$, 
we have $f_{2i}\in H^{*}(\mathcal{G})$ for all $i=0,\ \cdots,\ l$.
This establishes the statement. 
\end{proof}

We call each $f_{2i}$ in Lemma~\ref{deg_decomp_graph_cohom} a \textit{$2i$ degree homogeneous term} of $f$ for $i=0,\ \cdots,\ l$.
We denote ${\rm deg}\ f_{2i}=2i$.
Of course, $f_{2i}\in H^{2i}(\mathcal{G})$.

The following third lemma is about the map $\pi:\mathbb{Z}[X,H_{1},\ldots, H_{m}]\to H^{*}(\mathcal{G})$. This will be a technical part to show that $\pi$ is surjective (Lemma~\ref{surj_pi}).
\begin{lemma}
\label{technical_for_pi}
Assume that there exists an element $f\in H^{*}(\mathcal{G})$ such that $f\not\in {\rm Im}\ \pi$. 
Then there are $A\in \Z[X,H_{1},\cdots,H_{m}]$ and 
some integer $j_{k}$ such that 
\begin{eqnarray*}
\pi(A)-f=\chi\sum_{k}g_{2j_{k}},
\end{eqnarray*}
where 
$g_{2j_{k}}\in H^{2j_{k}}(\mathcal{G})$ but $g_{2j_{k}}\not\in{\rm Im}\ \pi$ with $j_{0}<j_{1}<\cdots <j_{k}<\cdots $.
\end{lemma}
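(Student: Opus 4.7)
The plan is to use the surjectivity of $\pi'$ (from Theorem~\ref{main-theorem1-1}) together with Lemma~\ref{kernel_of_Ftilde} to reduce $f$ modulo $\chi$. First, I would apply $\widetilde{F}$ to $f\in H^{*}(\mathcal{G})$ to obtain $\widetilde{F}(f)\in H^{*}(\widetilde{\mathcal{G}})$. Since $\pi'$ is surjective, there exists $B\in \Z[L_{1},\cdots,L_{m}]$ with $\pi'(B)=\widetilde{F}(f)$, and since $\phi$ is surjective by its definition, I lift $B$ to some $A_{0}\in \Z[X,H_{1},\cdots,H_{m}]$ with $\phi(A_{0})=B$. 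The commutativity of the lower square in diagram~\eqref{diagram} then yields
\begin{align*}
\widetilde{F}(\pi(A_{0})-f)=\pi'(\phi(A_{0}))-\widetilde{F}(f)=\pi'(B)-\widetilde{F}(f)=0,
\end{align*}
so $\pi(A_{0})-f\in {\rm Ker}\ \widetilde{F}$, which equals $\langle \chi\rangle$ by Lemma~\ref{kernel_of_Ftilde}. Hence $\pi(A_{0})-f=\chi h$ for some $h\in H^{*}(\mathcal{G})$.

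Next, I would apply Lemma~\ref{deg_decomp_graph_cohom} to decompose $h=h_{0}+h_{2}+\cdots +h_{2l}$ with $h_{2i}\in H^{2i}(\mathcal{G})$, and split the index set into
\begin{align*}
S=\{i\ |\ h_{2i}\in {\rm Im}\ \pi\},\qquad T=\{i\ |\ h_{2i}\not\in {\rm Im}\ \pi\}.
\end{align*}
For each $i\in S$, pick $C_{i}\in \Z[X,H_{1},\cdots,H_{m}]$ with $\pi(C_{i})=h_{2i}$, and observe that $\chi h_{2i}=\pi(X)\pi(C_{i})=\pi(XC_{i})$. Setting $A:=A_{0}-\sum_{i\in S}XC_{i}$, I obtain
\begin{align*}
\pi(A)-f=\chi h-\sum_{i\in S}\pi(XC_{i})=\chi\sum_{i\in T}h_{2i}.
\end{align*}

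Finally, the hypothesis $f\not\in {\rm Im}\ \pi$ forces $T\neq \emptyset$, since otherwise $f=\pi(A)\in {\rm Im}\ \pi$. Enumerate $T=\{j_{0}<j_{1}<\cdots\}$ and set $g_{2j_{k}}:=h_{2j_{k}}$; by construction $g_{2j_{k}}\in H^{2j_{k}}(\mathcal{G})$ but $g_{2j_{k}}\not\in {\rm Im}\ \pi$, which is precisely the claimed form. The argument is essentially a diagram chase through~\eqref{diagram}; the substantive input is packaged into the preceding results, so no additional obstacle is expected. The only subtle point to verify is that the degree-homogeneous decomposition provided by Lemma~\ref{deg_decomp_graph_cohom} interacts cleanly with the ideal $\langle \chi\rangle$, which holds because $\chi$ is homogeneous of degree $2$, so multiplication by $\chi$ preserves the grading and the decomposition of $\chi h$ is exactly $\sum_{i}\chi h_{2i}$ with $\chi h_{2i}\in H^{2i+2}(\mathcal{G})$.
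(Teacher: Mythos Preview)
Your proof is correct and follows essentially the same approach as the paper: both lift $\widetilde{F}(f)$ through the surjective maps $\pi'$ and $\phi$ to obtain an element whose image under $\pi$ differs from $f$ by something in $\ker\widetilde{F}=\langle\chi\rangle$, then split the resulting factor into its homogeneous components and absorb those lying in ${\rm Im}\,\pi$ into $A$ via $\pi(X)=\chi$. The only cosmetic difference is that the paper names the lift $B$ and the factor $g'$ where you write $A_{0}$ and $h$, and it argues $g'\notin{\rm Im}\,\pi$ directly rather than arguing $T\neq\emptyset$ by contraposition.
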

\begin{proof}
Assume $f\not\in {\rm Im}\ \pi$.
Recall that the following two homomorphisms in \eqref{diagram} are surjective by the assumption (1) of Theorem~\ref{main-theorem1} and Theorem~\ref{main-theorem1-1}:
\begin{align*}
& \phi:\mathbb{Z}[X,H_{1},\ldots, H_{m}]\longrightarrow \mathbb{Z}[L_{1},\ldots, L_{m}]; \\
& \pi':\mathbb{Z}[L_{1},\ldots, L_{m}]\longrightarrow H^{*}(\widetilde{\mathcal{G}}). 
\end{align*}
Therefore, there exists a non-zero polynomial  
\begin{align*}
B\in \Z[X,H_{1},\cdots,H_{m}] 
\end{align*}
such that for $\widetilde{F}:H^{*}(\mathcal{G})\to H^{*}(\widetilde{\mathcal{G}})$, 
\begin{align*}
\widetilde{F}(f)=\pi'\circ\phi(B).
\end{align*}
Because $\pi' \circ\phi=\widetilde{F} \circ\pi$ in the diagram \eqref{diagram}, we have 
\begin{eqnarray*}
\pi' \circ\phi(B)=\widetilde{F} \circ\pi(B)=\widetilde{F}(f).
\end{eqnarray*}
Hence $\pi(B)-f\in {\rm Ker}\ \widetilde{F}$.
Because of Lemma~\ref{kernel_of_Ftilde}, i.e., ${\rm Ker}\widetilde{F}=\langle \chi\rangle$, there is a $g'\in H^{*}(\mathcal{G})$ such that 
\begin{align}
\label{eq-Lem6.3}
\pi(B)-f=g'\chi.
\end{align}
Since $f\not\in {\rm Im}\ \pi$ and $\pi(X)=\chi$,
we have 
\begin{eqnarray*}
g'\not\in {\rm Im}\ \pi.
\end{eqnarray*}
Because of Lemma~\ref{deg_decomp_graph_cohom}, this element $g'$ can be divided into 
\begin{align*}
g'=g_{0}+\cdots +g_{2l}, 
\end{align*}
where $g_{2i}$ is a $2i$ degree homogeneous term, for $0\le i\le l$.
If $g_{2i}\in {\rm Im}\ \pi$, then $g'-g_{2i}\not\in {\rm Im}\ \pi$.
Therefore, $g'$ can be divided into two terms $(0\not=)g=\sum_{k}g_{2j_{k}}$ for all $g_{2j_{k}}\not\in{\rm Im}\ \pi$ and $h=\sum_{k'}g_{2i_{k'}}$ for all $g_{2i_{k'}}\in {\rm Im}\ \pi$ such that
\begin{eqnarray*}
g'=g+h.
\end{eqnarray*}
Since 
\begin{eqnarray*}
g'\chi=g\chi+h\chi=g\chi+\pi(CX)
\end{eqnarray*}
 for some $C\in \Z[X,H_{1},\cdots,H_{m}]$, together with \eqref{eq-Lem6.3}, 
we see that there is an element $A=B-CX\in \Z[X,H_{1},\cdots,H_{m}]$ such that
$\pi(A)-f=g\chi$.
\end{proof}

Now we may prove Lemma~\ref{surj_pi}.
\begin{lemma}
\label{surj_pi}
The homomorphism $\pi:\Z[X,H_{1},\cdots,H_{m}] \to H^{*}(\mathcal{G})$ is surjective. 
\end{lemma}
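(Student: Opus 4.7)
The plan is to argue by contradiction using a minimum-degree argument powered by the degree-reducing nature of Lemma~\ref{technical_for_pi}. Since $H^{*}(\mathcal{G})$ decomposes into homogeneous components by Lemma~\ref{deg_decomp_graph_cohom}, and $\pi$ respects the grading (where $X$ and each $H_{i}$ are assigned degree $2$), the image $\operatorname{Im}\pi$ is a graded subring of $H^{*}(\mathcal{G})$. Assuming for contradiction that $\pi$ is not surjective, I can therefore choose a \emph{homogeneous} element $f\in H^{2d}(\mathcal{G})$ of minimal degree $2d$ with $f\notin\operatorname{Im}\pi$.

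First I would handle the base case $d=0$. Since $H^{0}(BT^{n}\times BS^{1})=\Z$ and $\alpha(\epsilon)$ has positive degree, the congruence relation forces every element of $H^{0}(\mathcal{G})$ to be locally constant; as $\Gamma$ is connected, $H^{0}(\mathcal{G})=\Z$, and every constant map is the image of an integer constant under $\pi$. Hence necessarily $d\ge 1$.

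For $d\ge 1$, the heart of the argument is to apply Lemma~\ref{technical_for_pi} to $f$, producing $A\in \Z[X,H_{1},\ldots,H_{m}]$ together with homogeneous pieces $g_{2j_{k}}\notin\operatorname{Im}\pi$ of pairwise distinct, strictly increasing degrees $j_{0}<j_{1}<\cdots$ such that
\[
\pi(A)-f=\chi\sum_{k}g_{2j_{k}}.
\]
I would then extract the degree $2d$ component of this identity. Because $\chi\in H^{2}(\mathcal{G})$ and the $g_{2j_{k}}$ sit in pairwise distinct degrees, the degree $2d$ part of the right-hand side is either zero or has the form $\chi\, g_{2(d-1)}$. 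Writing $A_{2d}$ for the degree $2d$ homogeneous part of $A$ (well-defined because $\Z[X,H_{1},\ldots,H_{m}]$ is graded with generators in degree $2$), the degree $2d$ identity reads $\pi(A_{2d})-f=\chi\cdot h$ for some $h\in H^{2(d-1)}(\mathcal{G})$, possibly zero.

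To finish, I would invoke the minimality of $d$: every element of $H^{2(d-1)}(\mathcal{G})$ lies in $\operatorname{Im}\pi$, so $h=\pi(D)$ for some $D\in \Z[X,H_{1},\ldots,H_{m}]$, and hence $f=\pi(A_{2d}-XD)\in \operatorname{Im}\pi$, contradicting the choice of $f$. I do not expect a serious obstacle once Lemma~\ref{technical_for_pi} is in hand; the only care needed is the bookkeeping that $\pi$ and the grading interact compatibly on $\Z[X,H_{1},\ldots,H_{m}]$ and on $H^{*}(\mathcal{G})$, and that the homogeneous pieces $g_{2j_{k}}$ produced by Lemma~\ref{technical_for_pi} really live in strictly smaller degrees, so that the minimality of $d$ can actually be deployed.
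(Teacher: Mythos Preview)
Your proof is correct and follows essentially the same minimum-degree descent argument as the paper: pick a homogeneous $f\notin\operatorname{Im}\pi$ of minimal degree, apply Lemma~\ref{technical_for_pi}, and use that $\chi$ has degree $2$ to force a contradiction with minimality. Your version is arguably cleaner than the paper's, since you directly extract the degree $2d$ component of $\pi(A)-f=\chi\sum_k g_{2j_k}$ rather than arguing (as the paper does, somewhat informally) that $\pi(A)$ and $g\chi$ must themselves be homogeneous. One small remark: once you identify $h$ as either $0$ or $g_{2(d-1)}$, you can shortcut slightly---if $h=g_{2(d-1)}$ then Lemma~\ref{technical_for_pi} already says $g_{2(d-1)}\notin\operatorname{Im}\pi$, which directly contradicts minimality of $d$; so in fact $h=0$ and $f=\pi(A_{2d})$, and the step introducing $D$ is unnecessary. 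But your argument as written is valid either way.
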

\begin{proof}
By Lemma~\ref{technical_for_pi}, it is enough to show that 
every homogeneous term of $f\in H^{*}(\mathcal{G})$ is an element of ${\rm Im}\ \pi$. 

Assume that $H^{*}(\mathcal{G})\backslash {\rm Im}\ \pi\not=\emptyset$. 
Let $f$ be a minimal degree homogeneous element in $H^{*}(\mathcal{G})\backslash {\rm Im}\ \pi$.
Because of Lemma~\ref{technical_for_pi}, 
there exists a polynomial $A\in \Z[X,H_{1},\cdots,H_{m}]$ and an element $g\in H^{*}(\mathcal{G})\backslash {\rm Im}\ \pi$ such that 
\begin{align*}
f=\pi(A)-g\chi.
\end{align*}
By using Lemma~\ref{technical_for_pi} again, we also have that $g$  is a sum of homogeneous elements in $H^{*}(\mathcal{G})\backslash {\rm Im}\ \pi$. 

We claim that $\pi(A)(\in {\rm Im}\ \pi)$ and $g \chi(\in H^{*}(\mathcal{G})\backslash {\rm Im}\ \pi)$ are also homogeneous elements in $H^{*}(\mathcal{G})$ whose degrees are the same with the degree of $f$. Assume that $\pi(A)=\sum_{k}h_{2i_{k}}$ and $g\chi=\sum_{k}g_{2j_{k}}\chi$, where $h_{2i_{k}}\in H^{2i_{k}}(\mathcal{G})\cap {\rm Im}\ \pi$ for all $i_{0}<i_{1}<\cdots$ and $g_{2j_{k}}\in H^{2j_{k}}(\mathcal{G})\setminus {\rm Im}\ \pi$ for all $j_{0}<j_{1}<\cdots$. Because $f$ is a minimal homogeneous element in $H^{*}(\mathcal{G})\backslash {\rm Im}\ \pi$, we see that $\deg g_{2j_{0}}\chi=\deg f$ and $\deg h_{2i_{0}}\ge \deg f$; moreover, the higher terms of $\pi(A)\in {\rm Im}\ \pi$ and $g\chi\not\in{\rm Im}\ \pi$ are the same, i.e., they must be  $0$. Hence, both of $\pi(A)$ and $g$ are also homogeneous elements.

However, in this case, we have  
\begin{align*}
\deg g=\deg g \chi-\deg \chi=\deg f -2<\deg f.
\end{align*}
This gives a contradiction to that $f$ is a minimal homogeneous element in $H^{*}(\mathcal{G})\backslash {\rm Im}\ \pi$. 
Hence, there does not exist any homogeneous elements in 
$H^{*}(\mathcal{G})\backslash {\rm Im}\ \pi$.
Consequently, by Lemma~\ref{technical_for_pi}, we have that 
$H^{*}(\mathcal{G})\backslash {\rm Im}\ \pi=\emptyset$, i.e.,
$\pi$ is surjective. 
\end{proof}

Therefore, by the commutativity of the top diagram in \eqref{diagram}, the following lemma holds:
\begin{lemma}
\label{surj2}
$\Psi$ is surjective.
\end{lemma}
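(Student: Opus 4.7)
The plan is to obtain Lemma~\ref{surj2} as an almost immediate consequence of Lemma~\ref{surj_pi} together with the commutativity of the upper square of diagram~\eqref{diagram}. The key observation is that the polynomial ring $\mathbb{Z}[X,H_{1},\ldots,H_{m}]$ sits inside $\mathbb{Z}[X,H_{1},\ldots,H_{m},\overline{H_{1}},\ldots,\overline{H_{m}}]$ as a subring on which $\phi'$ restricts to the identity, so every element reached by $\pi$ is automatically reached by $\Psi\circ\hat{\pi}$.

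First I would verify that the upper square of \eqref{diagram} does commute. Checking on generators, the nontrivial point is the generator $\overline{H_{i}}$: one route gives $\Psi(\hat{\pi}(\overline{H_{i}}))=\tau_{\overline{H_{i}}}$, while the other gives $\pi(\phi'(\overline{H_{i}}))=\pi(X-H_{i})=\chi-\tau_{H_{i}}$. These agree by Lemma~\ref{existence_of_opposite-side}, which asserts $\tau_{H_{i}}+\tau_{\overline{H_{i}}}=\chi$. Commutativity on $X$ and $H_{i}$ is immediate from the definitions of $\phi'$, $\Psi$, $\hat{\pi}$, and $\pi$.

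Next, given any $f\in H^{*}(\mathcal{G})$, Lemma~\ref{surj_pi} produces a polynomial $B\in\mathbb{Z}[X,H_{1},\ldots,H_{m}]$ with $\pi(B)=f$. Regarding $B$ as an element of the larger ring $\mathbb{Z}[X,H_{1},\ldots,H_{m},\overline{H_{1}},\ldots,\overline{H_{m}}]$, one has $\phi'(B)=B$ since $B$ involves none of the variables $\overline{H_{i}}$. Then the commutativity just verified gives
\begin{align*}
\Psi(\hat{\pi}(B))=\pi(\phi'(B))=\pi(B)=f.
\end{align*}
Since $\hat{\pi}(B)\in\mathbb{Z}[\mathcal{G}]$, this shows $f\in\mathrm{Im}\,\Psi$, and hence $\Psi$ is surjective.

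The main obstacle here is essentially vacuous at this stage, since all of the real work has been absorbed into Lemmas~\ref{kernel_of_Ftilde}--\ref{surj_pi}, where the surjectivity of $\pi$ was proved by a minimal-degree counterexample argument resting on the identification $\mathrm{Ker}\,\widetilde{F}=\langle\chi\rangle$. The only subtlety in deducing Lemma~\ref{surj2} is the commutativity check on the generator $\overline{H_{i}}$, which is precisely the point where the defining relation $H_{i}+\overline{H_{i}}-X$ of the ideal $\mathcal{I}\subset\mathbb{Z}[X,\mathbf{H}]$ is matched with the geometric identity $\tau_{H_{i}}+\tau_{\overline{H_{i}}}=\chi$ provided by Lemma~\ref{existence_of_opposite-side}.
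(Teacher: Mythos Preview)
Your proposal is correct and follows exactly the paper's approach: the paper deduces Lemma~\ref{surj2} in one line from Lemma~\ref{surj_pi} via the commutativity of the top square of diagram~\eqref{diagram}. Your write-up simply makes explicit the commutativity check on the generator $\overline{H_{i}}$ (which the paper had already noted immediately after introducing the diagram) and spells out the preimage $\hat{\pi}(B)$, so there is no substantive difference.
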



\subsection{Injectivity of $\Psi$}
\label{sect:6.2}

Finally, in this section, we will prove the injectivity of $\Psi$.
In this section we use the following notation: 
\begin{eqnarray*}
I_{j}=\{1,\ \cdots,\ l\}-\{j\}
\end{eqnarray*} 
for $j=1,\ \cdots,\ l$.
We first prove Lemma \ref{key2}.
In order to prove Lemma \ref{key2}, we prepare the following lemma.

\begin{lemma}
\label{lem_for_key2}
Assume that $\cap_{k=1}^{l}L_{k}=\emptyset$ and $L_{k}=H_{k}\cap\overline{H_{k}}$ $(k=1,\ \cdots,\ l)$. 
Then for all $j=1, \ldots, l$, one of the following holds: 
\begin{itemize}
\item $H_{j}\cap (\cap_{k\in I_{j}}L_{k})=\emptyset$;
\item $\overline{H_{j}}\cap (\cap_{k\in I_{j}}L_{k})=\emptyset$.
\end{itemize}
\end{lemma}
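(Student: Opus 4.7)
The plan is to derive a contradiction via the Thom class $\tau_{H_j}\in H^{*}(\mathcal{G})$ (Lemma~\ref{thom}), assuming both $H_j\cap M$ and $\overline{H_j}\cap M$ are non-empty, where $M:=\bigcap_{k\in I_j}L_k$. The crucial observation is that the hypothesis $\bigcap_{k=1}^{l}L_k=\emptyset$ forces $\mathcal{V}^{M}\cap \mathcal{V}^{L_j}=\emptyset$, so every vertex of $M$ sits strictly inside either $\mathcal{V}^{H_j}$ or $\mathcal{V}^{\overline{H_j}}$; combined with the connectedness of $M$ (assumption (2) of Theorem~\ref{main-theorem1}), any path in $M$ between the two parts must use an edge joining an interior vertex of $H_j$ to an exterior vertex of $H_j$, and the Thom class of $H_j$ cannot satisfy the congruence relation on such an edge.

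First I would dispose of the vacuous case $M=\emptyset$. Otherwise, assume for contradiction that vertices $p\in \mathcal{V}^{H_j}\cap \mathcal{V}^{M}$ and $q\in \mathcal{V}^{\overline{H_j}}\cap \mathcal{V}^{M}$ exist. Since $\mathcal{V}^{H_j}\cap \mathcal{V}^{\overline{H_j}}=\mathcal{V}^{L_j}$ by assumption~(1) of Theorem~\ref{main-theorem1}, and $\mathcal{V}^{L_j}\cap \mathcal{V}^{M}\subset \mathcal{V}^{\bigcap_{k=1}^{l}L_k}=\emptyset$, neither $p$ nor $q$ lies on $\mathcal{V}^{L_j}$; in particular $p$ is interior in $H_j$ (i.e.\ $|\mathcal{E}^{H_j}_p|=2n$) while $q\notin \mathcal{V}^{H_j}$ is exterior in $H_j$, and $p\neq q$. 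Using connectedness of $M$, pick a path $p=v_0,v_1,\ldots,v_s=q$ in $M$; since $v_0\in \mathcal{V}^{H_j}$ and $v_s\notin \mathcal{V}^{H_j}$, some edge $\epsilon\in E^{M}$ along the path has $r_1:=i(\epsilon)\in \mathcal{V}^{H_j}\setminus \mathcal{V}^{L_j}$ (interior) and $r_2:=t(\epsilon)\notin \mathcal{V}^{H_j}$ (exterior).

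The Thom class then satisfies $\tau_{H_j}(r_1)=x$ and $\tau_{H_j}(r_2)=0$, so the congruence relation on $\epsilon$ forces $\alpha(\epsilon)\mid x$ in $H^{*}(BT^{n}\times BS^{1})$. However, condition~(2) of Definition~\ref{def_modeled_graph} evaluated at $r_1$ tells us that $\{\alpha(\epsilon_1^{+}),\ldots,\alpha(\epsilon_n^{+}),x\}$ forms a $\mathbb{Z}$-basis of $\algt^{*}_{\mathbb{Z}}\oplus\mathbb{Z}x$, so $\alpha(\epsilon)$ equals one of the primitive vectors $\alpha(\epsilon_i^{\pm})$ and in particular $\alpha(\epsilon)\neq\pm x$; since both $\alpha(\epsilon)$ and $x$ are primitive degree-$2$ elements of the polynomial ring, this rules out $\alpha(\epsilon)\mid x$. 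The contradiction establishes the lemma; note that the vertex-level conclusion immediately upgrades to the subgraph-level statement, since any edge in $\mathcal{E}^{H_j}\cap \mathcal{E}^{M}$ has its initial vertex in $\mathcal{V}^{H_j}\cap \mathcal{V}^{M}$.

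The main obstacle is guaranteeing in the second step that the transitional edge $\epsilon$ has $t(\epsilon)\notin \mathcal{V}^{H_j}$ (rather than $t(\epsilon)\in \mathcal{V}^{L_j}$, where the Thom class would take the ``boundary'' value $\alpha(n_{H_j}(t(\epsilon)))$ and the congruence would be automatically satisfied by the axial function). This is precisely what the main hypothesis $\bigcap_{k=1}^{l}L_k=\emptyset$ buys us: every vertex of $M$ avoids $\mathcal{V}^{L_j}$, so the path we construct in $M$ never lingers on the boundary of $H_j$ and the transition must go directly from interior to exterior, where the Thom class jumps by $x$.
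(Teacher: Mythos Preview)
Your proof is correct and follows essentially the same route as the paper: both arguments use the Thom class $\tau_{H_j}$, the connectedness of $M=\bigcap_{k\in I_j}L_k$ from assumption~(2), and the observation that $\mathcal{V}^{M}\cap\mathcal{V}^{L_j}=\emptyset$ forces every vertex of $M$ to be either interior or exterior to $H_j$. The only cosmetic difference is in the final contradiction: the paper phrases it as ``some vertex along the path must have $\tau_{H_j}$-value $\neq 0,x$, hence lies on $L_j$'', whereas you phrase the contrapositive directly as ``the edge where the value jumps from $x$ to $0$ violates the congruence relation since $\alpha(\epsilon)\nmid x$''.
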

\begin{proof}
Assume $\cap_{k=1}^{l}L_{k}=\emptyset$.
For $j\in \{1,\ \cdots,\ l\}$, 
if the following relation holds: 
\begin{align*}
L_{j}\cap (\cap_{k\in I_{j}}L_{k})=\cap_{k\in I_{j}}L_{k}=\emptyset,
\end{align*}
then it follows from $\cap_{k\in I_{j}}L_{k}=\emptyset$ that for each  
$H_{j}$ and $\overline{H_{j}}$ we have 
\begin{align*}
H_{j}\cap (\cap_{k\in I_{j}}L_{k})=\overline{H_{j}}\cap (\cap_{k\in I_{j}}L_{k})=\emptyset.
\end{align*}
So we may take $j\in \{1,\ \cdots,\ l\}$ such that 
\begin{eqnarray*}
\bigcap_{k\in I_{j}}L_{k}\not=\emptyset.
\end{eqnarray*} 
In this case, 
there exists a vertex $p\in \mathcal{V}^{\cap_{k\in I_{j}}L_{k}}$. 
Since $L_{j}\cap(\cap_{k\in I_{j}}L_{k})=\cap_{k=1}^{l}L_{k}=\emptyset$,
we have that $p\not\in \mathcal{V}^{L_{i}}$.
Therefore, for all vertices $p\in \mathcal{V}^{\cap_{k\in I_{j}}L_{k}}$, the following equation holds: 
\begin{eqnarray*}
\tau_{H_{j}}(p)=
\left\{
\begin{array}{cc}
0 & ({\rm if}\  p\not\in \mathcal{V}^{H_{j}}) \\
x & ({\rm if}\  p\in \mathcal{V}^{H_{j}})
\end{array}
\right.
\end{eqnarray*} 
where $\tau_{H_{j}}$ is the Thom class of $H_{j}$.

If there are two vertices $p,q\in \mathcal{V}^{\cap_{k\in I_{j}}L_{k}}$ such that 
\begin{align*}
\tau_{H_{j}}(p)=0;\quad \tau_{H_{j}}(q)=x.
\end{align*}
By the assumption (2) in Theorem~\ref{main-theorem1},
there exists a path from $p$ to $q$ in $\cap_{k\in I_{j}}L_{k}$, i.e., 
we may take the following sequence in $E^{\cap_{k\in I_{j}}L_{k}}$:
\begin{eqnarray*}
\epsilon_{1},\cdots, \epsilon_{s}\in E^{\cap_{k\in I_{j}}L_{k}}
\end{eqnarray*}
such that $i(\epsilon_{1})=p$ and $t(\epsilon_{s})=q$.
By the definition of the $T^{*}\mathbb{C}^{n}$-modeled GKM graph, the axial function satisfies that $\alpha(\epsilon)\not=x$ for all $\epsilon\in E$.
Moreover, $\tau_{H_{j}}$ satisfies the congruence relation.
Therefore, there exists an edge $\epsilon\in \{\epsilon_{1},\ldots, \epsilon_{s}\}$ such that  $r=i(\epsilon_{t})$ satisfies that 
\begin{align*}
\tau_{H_{j}}(r)\not=0,\ x.
\end{align*}
By the definition of the Thom class of the halfspace $H_{j}$,
the vertex $r\in \partial H_{j}=L_{j}$.
However, this gives that $r\in L_{j}\cap (\cap_{k\in I_{j}}L_{k})=\cap_{k=1}^{l}L_{k}$.
This gives a contradiction to that $\cap_{k=1}^{l}L_{k}=\emptyset$.

Therefore, we may assume $\tau_{H_{j}}(p)=0$ (resp.~$x$)
for all $p\in\mathcal{V}^{\cap_{k\in I_{j}}L_{k}}$.
Then, by definition of the halfspace, we have $H_{j}\cap \mathcal{V}^{\cap_{k\in I_{j}}L_{k}}=\emptyset$ (resp.~$\overline{H_{j}}\cap \mathcal{V}^{\cap_{k\in I_{j}}L_{k}}=\emptyset$).
This establishes the statement of this lemma.
\end{proof}

From Lemma \ref{lem_for_key2}, we have the following key fact.
\begin{lemma}
\label{key2}
Assume the $T^{*}\mathbb{C}^{n}$-modeled GKM graph $\mathcal{G}$ satisfies two assumptions (1), (2) of Theorem~\ref{main-theorem1}.
If $\cap_{k=1}^{l}L_{k}=\emptyset$ and $L_{k}=H_{k}\cap\overline{H_{k}}$ $(k=1,\ \cdots,\ l)$, then we can take a halfspace $H_{k}$
such that $\cap_{k=1}^{l}H_{k}=\emptyset$. 
\end{lemma}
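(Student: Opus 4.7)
I will argue by induction on the number $l$ of hyperplanes.

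\emph{Base case.} For $l=1$, since each hyperplane $L_{1}$ is by construction a non-empty $(2n-2)$-valent subgraph (Definition~\ref{hyperplane} and Lemma~\ref{key-properties-hyperplane}), the hypothesis $\bigcap_{k=1}^{1}L_{k}=\emptyset$ is never satisfied and the implication holds vacuously.

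\emph{Inductive step.} Suppose the assertion holds for every collection of fewer than $l$ hyperplanes, and assume $\bigcap_{k=1}^{l}L_{k}=\emptyset$. If there is an index $j_{0}$ with $\bigcap_{k\in I_{j_{0}}}L_{k}=\emptyset$, apply the inductive hypothesis to the $(l-1)$-element family $\{L_{k}\}_{k\ne j_{0}}$ to obtain halfspaces $H_{k}^{*}\in\{H_{k},\overline{H_{k}}\}$ (for $k\ne j_{0}$) with $\bigcap_{k\ne j_{0}}H_{k}^{*}=\emptyset$; adjoining an arbitrary choice for $H_{j_{0}}^{*}$ immediately gives $\bigcap_{k=1}^{l}H_{k}^{*}=\emptyset$. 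Otherwise $\bigcap_{k\in I_{j}}L_{k}\ne\emptyset$ for every $j$, and Lemma~\ref{lem_for_key2} then provides, for each $j$, a (in fact unique) halfspace $H_{j}^{*}$ with $H_{j}^{*}\cap\bigcap_{k\in I_{j}}L_{k}=\emptyset$; it then suffices to verify $\bigcap_{j=1}^{l}H_{j}^{*}=\emptyset$.

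\emph{Verification and main obstacle.} Assume for contradiction $p\in\bigcap_{j}H_{j}^{*}$ and set $S_{p}=\{k\mid p\in L_{k}\}$. The condition $\bigcap_{k}L_{k}=\emptyset$ forces $S_{p}\subsetneq\{1,\dots,l\}$. If the complement $S_{p}^{c}$ is a singleton $\{j_{0}\}$, then $p\in\bigcap_{k\in I_{j_{0}}}L_{k}$ while $p\in H_{j_{0}}^{*}$, directly contradicting the choice of $H_{j_{0}}^{*}$. The delicate case is $|S_{p}^{c}|\ge 2$. To treat it, I would extract from the proof of Lemma~\ref{lem_for_key2} the sharper statement that $\tau_{H_{j}^{*}}\equiv 0$ on the connected subgraph $\bigcap_{k\in I_{j}}L_{k}$, then apply connectedness (assumption~(2)) and walk along the graph from $p$ to a vertex $q\in\bigcap_{k\ne j_{1}}L_{k}$ for a fixed $j_{1}\in S_{p}^{c}$; the congruence relation of the axial function along the path, together with the fact that $p$ is strictly interior to $H_{j_{1}}^{*}$ while $q$ is strictly interior to $\overline{H_{j_{1}}^{*}}$, should force the walk to meet $L_{j_{1}}$ in a way incompatible with $j_{1}\in S_{p}^{c}$. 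Making this path-tracking argument precise, and ruling out the possibility that the Thom class structure allows $p$ to sit simultaneously on the ``wrong'' side of several $L_{j}$'s, is the main technical hurdle, in the same spirit as the connectedness argument that drives Lemma~\ref{lem_for_key2} itself.
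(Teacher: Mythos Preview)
Your argument is incomplete, and the gap is larger than you suggest. In your case~(b)---every $\bigcap_{k\in I_{j}}L_{k}$ nonempty---the ``easy'' subcase $|S_{p}^{c}|=1$ is actually vacuous: since $p\in H_{j}^{*}$ and $H_{j}^{*}\cap\bigl(\bigcap_{k\in I_{j}}L_{k}\bigr)=\emptyset$ for every $j$, one has $p\notin\bigcap_{k\in I_{j}}L_{k}$ for every $j$, which already forces $|S_{p}^{c}|\ge 2$. So the entire content of case~(b) lies in the subcase you leave open. The path-tracking sketch you propose (walk towards $\bigcap_{k\ne j_{1}}L_{k}$ and detect a crossing of $L_{j_{1}}$) is not a proof: even granting that the path meets $L_{j_{1}}$, nothing prevents an intermediate vertex from lying on $L_{j_{1}}$ while still belonging to all the chosen halfspaces, so no contradiction emerges.

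The paper's approach is entirely different and bypasses both the induction on $l$ and the local analysis of $S_{p}$. After selecting the $H_{j}$ via Lemma~\ref{lem_for_key2}, the paper works with the global element $\prod_{j=1}^{l}\tau_{H_{j}}\in H^{*}(\mathcal{G})$ and runs a propagation argument: at a vertex $p\in\bigcap_{j}H_{j}$ one has $\prod_{j}\tau_{H_{j}}(p)=x^{l}$, and since no axial function $\alpha(\epsilon)$ is a multiple of the residual basis $x$, the congruence relations force $\prod_{j}\tau_{H_{j}}(q)=x^{l}$ at every neighbour $q$, hence at every vertex of the connected graph $\Gamma$. That yields $\mathcal{V}^{\Gamma}=\mathcal{V}^{\cap_{j}H_{j}}$, which is impossible since a halfspace always has exterior vertices. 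The device you are missing is to treat the \emph{product} of all the Thom classes as a single rigid element of $H^{*}(\mathcal{G})$ and exploit indivisibility of $x^{l}$ by any $\alpha(\epsilon)$, rather than chasing one hyperplane at a time along a path.
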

\begin{proof}
If $\cap_{k=1}^{l}L_{k}=\emptyset$ and $L_{k}=H_{k}\cap\overline{H_{k}}$ $(k=1,\ \cdots,\ l)$,
 we can take $H_{j}$ as 
$H_{j}\cap (\cap_{k\in I_{j}}L_{k})=\emptyset$
for all $j=1,\ \cdots,\ l$ from Lemma \ref{lem_for_key2}.
Now we may 
set 
\begin{eqnarray*}
\mathbf{H}'=\left\{H_{1},\ \cdots,\ H_{l}\ |\ H_{j}\cap (\cap_{k\in I_{j}}L_{k})=\emptyset,\ j=1, \ldots, l\right\}.
\end{eqnarray*}
We claim that $\cap_{H\in\mathbf{H}'}H=\cap_{j=1}^{l}H_{j}=\emptyset$.
If there exists a vertex $p\in \mathcal{V}^{\cap_{j=1}^{l}H_{j}}$, it follows from the assumption $\cap_{k=1}^{l}L_{k}=\emptyset$ that we have $\tau_{H_{j}}(p)=x$ for all $j=1, \ldots, l$; therefore, 
\begin{align*}
\prod_{j=1}^{l}\tau_{H_{j}}(p)=x^{l}.
\end{align*}
Because $\prod_{j=1}^{l}\tau_{H_{j}}\in H^{*}(\mathcal{G})$, 
$\prod_{j=1}^{l}\tau_{H_{j}}$ satisfies the congruence relations for all edges $\epsilon\in E$.
By definition of $T^{*}\mathbb{C}^{n}$-modeled GKM graph,
the axial function satisfies $\alpha(\epsilon)\not=x$ for all edges $\epsilon\in E$.
This shows that for all edge $\epsilon\in E_{p}$ the following equation holds:
\begin{align*}
\prod_{j=1}^{l}\tau_{H_{j}}(t(\epsilon))=x^{l}.
\end{align*}
Because the graph $\Gamma$ is connected, we can apply the same argument for all vertices; therefore, we have 
\begin{align*}
\prod_{j=1}^{l}\tau_{H_{j}}(q)=x^{l}
\end{align*}
for all $q\in \mathcal{V}$.
This shows that $\mathcal{V}=\mathcal{V}^{\cap_{j=1}^{l}H_{j}}$.
However, by definition of the halfspace, it is obvious that $\mathcal{V}\not=\mathcal{V}^{\cap_{j=1}^{l} H_{j}}$ and this gives a contradiction.
Hence, we have $\cap_{j=1}^{l} H_{j}=\cap_{H\in\mathbf{H}'}H=\emptyset$.
\end{proof}

We next will prove Lemma \ref{key_for_inj}.
In order to prove it, we prepare some notations and two lemmas: Lemma \ref{1st_for_key_for_inj} and \ref{2nd_for_key_for_inj}.

Let $\widetilde{\pi}:\Z[X,\ H_{1},\ \cdots,\ H_{m}]\to \Z[\mathcal{G}]$ be the natural homomorphism such that $\widetilde{\pi}(X)=X$, 
$\widetilde{\pi}(H_{i})=H_{i}$ for $i=1,\ \cdots,\ m$.
Because $\overline{H_{i}}=X-H_{i}$ in $\Z [\mathcal{G}]$, we have 
\begin{eqnarray*}
\widetilde{\pi}\circ\phi'=\hat{\pi}:\Z[X,\ H_{1},\ \cdots,H_{m},\overline{H}_{1},\cdots, \overline{H_{m}}]\to \Z[\mathcal{G}].
\end{eqnarray*}
Since $\hat{\pi}$ is surjective, $\widetilde{\pi}$ is also surjective.
Moreover we have 
\begin{eqnarray*}
\Psi\circ\widetilde{\pi}=\pi:\Z[X,\ H_{1},\ \cdots,\ H_{m}]\to H^{*}(\mathcal{G})
\end{eqnarray*} 
by definitions of $\Psi$ and $\pi$.
Hence we have the following commutative diagram:
\begin{eqnarray*}
\xymatrix{
\Z[X,H_{1},\cdots,H_{m},\overline{H}_{1},\cdots,\overline{H_{m}}]\ar[r]^(.7){\hat{\pi}}\ar[d]^{\phi'} & \Z[\mathcal{G}]\ar[d]^{\Psi} \\
\Z[X,H_{1},\cdots,H_{m}]\ar[r]^(.7){\pi}\ar[d]^{\phi}\ar[ru]^{\widetilde{\pi}} & H^{*}(\mathcal{G})\ar[d]^{\widetilde{F}} \\
\Z[L_{1},\cdots,L_{m}]\ar[r]^(.7){\pi'} & H^{*}(\mathcal{\widetilde{G}})
}
\end{eqnarray*}
Define the following ideal in $\Z[X,H_{1},\cdots,H_{m},\overline{H}_{1},\cdots,\overline{H_{m}}]$:
\begin{align*}
\mathcal{I}=\Big\langle H_{i}+\overline{H_{i}}-X,\ \prod_{H\in \mathbf{H}'} H\ \Big|\ i=1,\ \cdots,\ m,\ \mathbf{H}'\in \mathbf{I}(\mathbf{H}) \Big\rangle,
\end{align*}
where $\mathbf{I}(\mathbf{H})=\{\mathbf{H}'\subset \mathbf{H}\ |\ \cap_{H\in \mathbf{H}'}H=\emptyset\}$.
For this ideal, the following property holds.
\begin{lemma}
\label{1st_for_key_for_inj}
For the ideal $\mathcal{I}\subset \Z[X,H_{1},\cdots,H_{m},\overline{H}_{1},\cdots,\overline{H_{m}}]$, 
the following two properties hold:
\begin{enumerate}
\item[(i)] ${\rm Ker}\ \widetilde{\pi}=\phi'(\mathcal{I})$;
\item[(ii)] ${\rm Ker}\ \pi'=\phi\circ\phi'(\mathcal{I})$.
\end{enumerate}
\end{lemma}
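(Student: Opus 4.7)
The plan is to exploit the commutative diagram relating $\hat{\pi}$, $\widetilde{\pi}$, $\phi$, $\phi'$, and $\pi'$, together with the fact that $\mathcal{I}$ was defined precisely so that $\hat{\pi}$ is the quotient map, i.e., $\ker\hat{\pi}=\mathcal{I}$. Part (i) is then essentially a diagram chase: for the inclusion $\phi'(\mathcal{I})\subset\ker\widetilde{\pi}$, any $J\in\mathcal{I}$ satisfies $\widetilde{\pi}(\phi'(J))=\hat{\pi}(J)=0$; for the reverse inclusion, I take any $A\in \ker\widetilde{\pi}$, regard it as an element of the larger ring $\Z[X,H_{1},\ldots,H_{m},\overline{H_{1}},\ldots,\overline{H_{m}}]$ via the natural inclusion, and observe that $\phi'(A)=A$ because $A$ involves none of the $\overline{H_{i}}$. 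Hence $\hat{\pi}(A)=\widetilde{\pi}(\phi'(A))=\widetilde{\pi}(A)=0$, so $A\in\mathcal{I}$, and $A=\phi'(A)\in\phi'(\mathcal{I})$.

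For part (ii) I will use Theorem~\ref{main-theorem1-1} to identify $\ker\pi'$ explicitly as the monomial ideal
\begin{equation*}
\ker\pi'=\Big\langle \prod_{L\in\mathbf{L}'} L\ \Big|\ \mathbf{L}'\in\mathbf{I}(\mathbf{L})\Big\rangle
\end{equation*}
inside $\Z[L_{1},\ldots,L_{m}]$. The inclusion $\phi\circ\phi'(\mathcal{I})\subset \ker\pi'$ is straightforward generator by generator: $\phi\circ\phi'(H_{i}+\overline{H_{i}}-X)=L_{i}+(-L_{i})-0=0$, and for $\mathbf{H}'\in\mathbf{I}(\mathbf{H})$ the image $\phi\circ\phi'(\prod_{H\in\mathbf{H}'} H)$ is (up to sign) a product $\prod_{H\in\mathbf{H}'}L_{H}$, where $L_{H}$ denotes the hyperplane underlying the halfspace $H$. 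Since $L_{H}\subset H$ for each $H$, we have $\bigcap_{H\in\mathbf{H}'} L_{H}\subset \bigcap_{H\in\mathbf{H}'} H=\emptyset$, so taking $\mathbf{L}'$ to be the set of distinct $L_{H}$'s places this product in $\ker\pi'$.

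The main obstacle is the reverse inclusion $\ker\pi'\subset \phi\circ\phi'(\mathcal{I})$, and here Lemma~\ref{key2} does the real work. Given $\mathbf{L}'=\{L_{k_{1}},\ldots,L_{k_{l}}\}\in\mathbf{I}(\mathbf{L})$ I need to realize $\prod L_{k_{j}}$ as $\phi\circ\phi'$ of a generator of $\mathcal{I}$. Naively choosing $\mathbf{H}'=\{H_{k_{1}},\ldots,H_{k_{l}}\}$ need not satisfy $\cap H_{k_{j}}=\emptyset$, but Lemma~\ref{key2} guarantees that for each $j$ there is a choice $\widetilde{H}_{k_{j}}\in\{H_{k_{j}},\overline{H_{k_{j}}}\}$ with $\bigcap_{j}\widetilde{H}_{k_{j}}=\emptyset$. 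With this choice $\widetilde{\mathbf{H}}'\in\mathbf{I}(\mathbf{H})$ and $\phi\circ\phi'(\prod_{j}\widetilde{H}_{k_{j}})=\pm\prod_{j}L_{k_{j}}$, giving the needed preimage. Combining the two inclusions yields (ii), and together with (i) completes the proof of the lemma.
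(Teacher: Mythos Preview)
Your proof is correct and follows essentially the same approach as the paper's own argument. The only minor difference is in part~(i): the paper lifts $A\in\ker\widetilde{\pi}$ to a preimage $B$ by invoking surjectivity of $\phi'$, whereas you use the explicit section given by the natural inclusion $\Z[X,H_{1},\ldots,H_{m}]\hookrightarrow\Z[X,H_{1},\ldots,H_{m},\overline{H_{1}},\ldots,\overline{H_{m}}]$ (so that $\phi'(A)=A$). Your version is slightly more concrete, and in part~(ii) you are actually a bit more careful than the paper about handling the case where the $L_{H}$'s might repeat, but the core of both proofs---especially the use of Lemma~\ref{key2} for the inclusion $\ker\pi'\subset\phi\circ\phi'(\mathcal{I})$---is identical.
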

\begin{proof}
Since $\hat{\pi}$ is the natural projection, it follows from the definition of $\mathbb{Z}[\mathcal{G}]$ that 
\begin{align*}
\mathcal{I}={\rm Ker}\ \hat{\pi}.
\end{align*}
So, by the commutativity of the diagram, we have that 
\begin{align*}
\widetilde{\pi}(\phi'(\mathcal{I}))=\hat{\pi}(\mathcal{I})=\hat{\pi}({\rm Ker}\ \hat{\pi})=\{0\}.
\end{align*}
Hence $\phi'(\mathcal{I})\subset {\rm Ker}\ \widetilde{\pi}$. 
Let $A$ be an element in ${\rm Ker}\ \widetilde{\pi}$.
Because $\phi'$ is surjective, there is an element $B\in \Z[X,\ H_{1},\ \cdots,H_{m},\overline{H}_{1},\cdots, \overline{H_{m}}]$ such that $\phi'(B)=A$.
By the commutativity of the diagram, we also have 
\begin{align*}
\hat{\pi}(B)=\widetilde{\pi}\circ\phi'(B)=\widetilde{\pi}(A)=0.
\end{align*}
So $B\in {\rm Ker}\ \hat{\pi}=\mathcal{I}$.
Hence $A=\phi'(B)\in \phi'(\mathcal{I})$, that is, ${\rm Ker}\ \widetilde{\pi}\subset \phi'(\mathcal{I})$. 
Therefore, we establish the first property: ${\rm Ker}\ \widetilde{\pi}=\phi'(\mathcal{I})$.

By Theorem~\ref{main-theorem1-1}, we know 
\begin{align*}
{\rm Ker}\ \pi'=\langle\prod_{L\in \mathbf{L}'}L\ |\ \mathbf{L}'\in \mathbf{I}(\mathbf{L})\rangle,
\end{align*}
where $\mathbf{I}(\mathbf{L})=\{\mathbf{L}'\subset \mathbf{L}\ |\ \cap_{L\in \mathbf{L}'}L=\emptyset\}$.
Take a generator $\prod_{L\in \mathbf{L}'}L\in {\rm Ker}\ \pi'$.
From Lemma~\ref{key2}, for $\mathbf{L}'=\{L_{1},\ \cdots,\ L_{l}\}\in \mathbf{I}(\mathbf{L})$, 
there exists a set of halfspaces $\mathbf{H}'=\{H_{1},\ \cdots,\ H_{l}\}\in \mathbf{I}(\mathbf{H})$ such that $H_{k}\cap \overline{H_{k}}=L_{k}$.
By the definition of the ideal $\mathcal{I}$, a product $\prod_{k=1}^{l}H_{k}$ is one of the generators of $\mathcal{I}$.
Moreover, by the definitions of $\phi'$ and $\phi$, we see that 
\begin{align*}
\phi\circ\phi'(\mathcal{I})\ni \phi\circ\phi'(\prod_{k=1}^{l}H_{k})=\pm\prod_{k=1}^{l}L_{k}.
\end{align*}
Because this satisfies for all generators $\prod_{L\in \mathbf{L}'}L$ in ${\rm Ker}\ \pi'$, 
we have that 
\begin{align*}
{\rm Ker}\ \pi'\subset\phi\circ\phi'(\mathcal{I}).
\end{align*}
On the other hand,  
because $\phi'(H+\overline{H}-X)=0$ and $\phi\circ\phi'(\prod_{H\in \mathbf{H}'}H)=\pm\prod_{L\in \mathbf{L}'}L\in {\rm Ker}\ \pi'$, 
for all $A\in \mathcal{I}$
we have 
\begin{align*}
\pi'\circ\phi\circ\phi'(A)=\{0\}.
\end{align*} 
So we have ${\rm Ker}\ \pi'\supset\phi\circ\phi'(\mathcal{I})$.
Therefore we conclude the second property: ${\rm Ker}\ \pi'=\phi\circ\phi'(\mathcal{I})$.
\end{proof}

In order to prove Lemma~\ref{key_for_inj}, we also prepare the following technical lemma for general polynomial rings.
\begin{lemma}
\label{2nd_for_key_for_inj}
Let $\mathcal{I}\subset \Z[x_{1},\ \cdots,\ x_{l}]$ be an ideal generated by homogeneous polynomials, that is,
$\mathcal{I}=\langle p_{1},\ \cdots,\ p_{m}\rangle$
where $p_{i}$ is a homogeneous polynomial of $\Z[x_{1},\ \cdots,\ x_{l}]$ such that ${\rm deg}\ p_{i}\le {\rm deg}\ p_{j}$ for $i< j$.
For every element $A\in \mathcal{I}$, if we denote 
$A=A_{1}+\cdots +A_{n}$,
where $A_{i}$ is a homogeneous term ($i=1,\ \cdots,\ n$) and ${\rm deg}\ A_{i}<{\rm deg}\ A_{j}$ for $i<j$, 
then $A_{i}\in \mathcal{I}$ for all $i=1,\ \cdots,\ n$.
\end{lemma}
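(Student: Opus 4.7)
The plan is to exploit the fact that the ideal $\mathcal{I}$ is generated by homogeneous elements, which by a standard argument forces $\mathcal{I}$ to be a graded (homogeneous) ideal. The conclusion of the lemma is exactly the defining property of such an ideal, namely that the homogeneous components of any member again lie in the ideal.

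First I would write an arbitrary $A \in \mathcal{I}$ as $A = \sum_{k=1}^{m} q_{k} p_{k}$ for some $q_{k} \in \Z[x_{1}, \ldots, x_{l}]$. Next I would decompose each $q_{k}$ into its own homogeneous components $q_{k} = \sum_{j \ge 0} q_{k,j}$ with $q_{k,j}$ homogeneous of degree $j$ (or zero). Substituting, I get
\begin{align*}
A = \sum_{k=1}^{m} \sum_{j \ge 0} q_{k,j} p_{k},
\end{align*}
and each summand $q_{k,j} p_{k}$ is homogeneous of degree $j + \deg p_{k}$ because $p_{k}$ itself is homogeneous.

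Now I would collect these summands by total degree: for each integer $d \ge 0$, set
\begin{align*}
B_{d} = \sum_{\substack{k,j \\ j + \deg p_{k} = d}} q_{k,j} p_{k}.
\end{align*}
By construction $B_{d}$ is a homogeneous polynomial of degree $d$ (when nonzero), it lies in $\mathcal{I}$ since it is an $\Z[x_{1}, \ldots, x_{l}]$-linear combination of the generators $p_{k}$, and $A = \sum_{d} B_{d}$. On the other hand, the decomposition of $A$ into its own homogeneous components $A = A_{1} + \cdots + A_{n}$ is unique. Matching the two decompositions degree by degree yields $A_{i} = B_{\deg A_{i}} \in \mathcal{I}$ for each $i$, which is the desired conclusion.

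There is really no serious obstacle here; the only thing to be careful about is the bookkeeping between the two indexings (the generator-based decomposition $\sum_{k,j} q_{k,j} p_{k}$ versus the intrinsic homogeneous decomposition $\sum_{i} A_{i}$), and the appeal to the uniqueness of homogeneous decomposition in a polynomial ring over $\Z$. The hypothesis that the generators are ordered by degree is not actually used in the argument; what matters is only that each $p_{k}$ is itself homogeneous.
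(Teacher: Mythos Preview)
Your proof is correct and follows essentially the same approach as the paper: write $A=\sum_k q_k p_k$, decompose each coefficient $q_k$ into homogeneous parts, regroup by total degree, and identify the resulting degree-$d$ pieces with the $A_i$ via uniqueness of the homogeneous decomposition. Your observation that the ordering of the generators by degree is unnecessary is also correct; the paper's argument likewise never uses it.
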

\begin{proof}
Because $A\in \mathcal{I}=\langle p_{1},\ \cdots,\ p_{m}\rangle$, 
there exists $X_{k}\in \Z[x_{1},\ \cdots,\ x_{l}]$, $k=1,\ldots, m$, such that 
\begin{align*}
A=X_{1}p_{1}+\cdots +X_{m}p_{m}.
\end{align*}
Then we can put $X_{k}=X_{k1}+X_{k2}+\cdots +X_{ks_{k}}$ where $X_{ki}$ is a homogeneous term ($i=1,\ \cdots,\ s_{k}$) 
and ${\rm deg}\ X_{ki}<{\rm deg}\ X_{kj}$ for $i<j$.
Hence, by changing the order of monomials, we may rewrite
\begin{align*}
A&=(X_{11}+\cdots +X_{1s_{1}})p_{1}+\cdots +(X_{m1}+\cdots +X_{ms_{m}})p_{m} \\
&=X_{11}p_{1}+\cdots X_{ms_{m}}p_{m}
\end{align*}
as
\begin{align*}
A=A_{1}+\cdots +A_{n},
\end{align*}
where $\deg A_{i}<\deg A_{j}$ if $i<j$.
Because $A_{i}$ is a homogeneous term, we have 
\begin{align*}
A_{i}=\sum_{j\in \mathbf{D}_{i}}X_{jh_{j}}p_{j} 
\end{align*}
where 
$\mathbf{D}_{i}=\{j\ |\ \deg X_{jh_{j}}+\deg p_{j}=\deg A_{i}\}$.
Therefore, $A_{i}\in \mathcal{I}$ for all $i=1,\ \cdots,\ n$.
\end{proof}

Using two lemmas as above, we have the following lemma.
\begin{lemma}
\label{key_for_inj}
${\rm Ker}\ \widetilde{\pi}={\rm Ker}\ \pi$.
\end{lemma}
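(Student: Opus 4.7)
\textbf{Proposal for the proof of Lemma~\ref{key_for_inj}.}
The inclusion $\mathrm{Ker}\,\widetilde{\pi}\subset \mathrm{Ker}\,\pi$ is immediate from the identity $\pi=\Psi\circ\widetilde{\pi}$, so the content lies in the reverse inclusion. My plan is to induct on degree, using the bottom square of the commutative diagram to reduce modulo $\phi'(\mathcal{I})$, and then to peel off one factor of $X$ at a time by invoking that $\chi$ is a non-zero-divisor on $H^{*}(\mathcal{G})$.

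First I would observe that $\mathcal{I}$, and hence both $\phi'(\mathcal{I})$ and $\phi\circ\phi'(\mathcal{I})$, is generated by homogeneous polynomials (the generators $H_i+\overline{H_i}-X$ are linear, and the products $\prod_{H\in\mathbf{H}'}H$ are homogeneous), and that $\pi$ is a graded homomorphism if we declare $\deg X=\deg H_i=2$. Therefore Lemma~\ref{2nd_for_key_for_inj} applies: an element of $\mathrm{Ker}\,\pi$ decomposes into homogeneous summands each lying in $\mathrm{Ker}\,\pi$, and similarly for $\phi'(\mathcal{I})$. Thus I may assume $A\in\mathrm{Ker}\,\pi$ is homogeneous of some degree $d$, and set up induction on $d$. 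The base case $d=0$ is trivial since $\pi$ is injective on constants.

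For the inductive step, from $\pi(A)=0$ and $\widetilde{F}\circ\pi=\pi'\circ\phi$ I get $\pi'(\phi(A))=0$, so by Lemma~\ref{1st_for_key_for_inj}(ii) there exists $B\in\phi'(\mathcal{I})$ with $\phi(B)=\phi(A)$; taking the degree-$d$ component of $B$ via Lemma~\ref{2nd_for_key_for_inj}, I may assume $B$ is homogeneous of degree $d$. Since $\mathrm{Ker}\,\phi=\langle X\rangle$ (because $\phi(X)=0$ while $\phi$ sends the remaining generators to an algebraically independent family), I can write $A-B=X\cdot C$ for some homogeneous $C$ of degree $d-1$. Now $B\in\phi'(\mathcal{I})=\mathrm{Ker}\,\widetilde{\pi}\subset\mathrm{Ker}\,\pi$ by Lemma~\ref{1st_for_key_for_inj}(i), so $\pi(B)=0$, which forces $\chi\cdot\pi(C)=\pi(XC)=\pi(A)-\pi(B)=0$ in $H^{*}(\mathcal{G})$.

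The crucial step is then to conclude $\pi(C)=0$: evaluating at any vertex $p$ gives $x\cdot\pi(C)(p)=0$ in the polynomial ring $H^{*}(BT^{n}\times BS^{1})\simeq \Z[\alpha_1,\dots,\alpha_n,x]$, which is an integral domain, so $\pi(C)(p)=0$ for all $p\in\mathcal{V}$. By the induction hypothesis $C\in\mathrm{Ker}\,\widetilde{\pi}$, and hence $\widetilde{\pi}(A)=\widetilde{\pi}(B)+\widetilde{\pi}(X)\widetilde{\pi}(C)=0$, closing the induction. The main subtlety is bookkeeping the grading when passing $B$ to its degree-$d$ part and verifying that the generators of $\phi'(\mathcal{I})$ really are homogeneous so that Lemma~\ref{2nd_for_key_for_inj} can be applied; once this is confirmed, the integral-domain argument at the stalks makes the inductive descent on degree work cleanly.
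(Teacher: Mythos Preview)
Your proposal is correct and follows essentially the same approach as the paper: both arguments reduce to homogeneous elements, use the bottom square to find a homogeneous $B\in\phi'(\mathcal{I})$ with $\phi(A)=\phi(B)$ via Lemma~\ref{1st_for_key_for_inj}(ii) and Lemma~\ref{2nd_for_key_for_inj}, write $A-B=XC$ via $\mathrm{Ker}\,\phi=\langle X\rangle$, and then descend in degree. The only differences are cosmetic---you phrase the descent as an upward induction on degree while the paper argues by minimal counterexample---and you make explicit the step that $\chi$ is a non-zero-divisor in $H^{*}(\mathcal{G})$ (by evaluating at vertices in the polynomial ring $H^{*}(BT^{n}\times BS^{1})$) to conclude $\pi(C)=0$, a point the paper uses but leaves implicit when it asserts $C\in\phi'(\mathcal{I})$ from minimality alone.
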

\begin{proof}
By Lemma~\ref{1st_for_key_for_inj} (i), 
${\rm Ker}\ \widetilde{\pi}=\phi'(\mathcal{I})$.
Therefore, by using 
the commutativity of the diagram and ${\rm Ker}\ \hat{\pi}=\mathcal{I}$, we have 
\begin{align*}
\pi({\rm Ker}\ \widetilde{\pi})=\pi\circ\phi'(\mathcal{I})=\Psi\circ\hat{\pi}(\mathcal{I})=0.
\end{align*}
Hence, 
\begin{align*}
{\rm Ker}\ \widetilde{\pi}=\phi'(\mathcal{I})\subset {\rm Ker}\ \pi.
\end{align*}

We claim that ${\rm Ker}\ \pi\subset {\rm Ker}\ \widetilde{\pi}(\subset \Z[X,\ H_{1},\ \cdots,\ H_{m}])$.
Assume that ${\rm Ker}\ \pi\backslash\phi'(\mathcal{I})\not=\emptyset$.
Let $A\in {\rm Ker}\ \pi\backslash\phi'(\mathcal{I})\subset \Z[X,\ H_{1},\ \cdots,\ H_{m}]$ be a minimal degree homogeneous polynomial.
By the previous diagram, 
\begin{align*}
\pi'\circ\phi(A)=\widetilde{F}\circ\pi(A)=0.
\end{align*}
Hence, by Lemma~\ref{1st_for_key_for_inj} (ii), 
\begin{align*}
\phi(A)\in {\rm Ker}\ \pi'=\phi\circ\phi'(\mathcal{I}).
\end{align*}
Therefore, 
we can take $B\in \phi'(\mathcal{I})(\subset {\rm Ker}\ \pi)$ such that $\phi(A)=\phi(B)$.
Because $\phi'(\mathcal{I})$ is an ideal in $\mathbb{Z}[X,H_{1},\ldots, H_{m}]$ and $A$ is a homogeneous polynomial, 
it follows from Lemma~\ref{2nd_for_key_for_inj} that 
we may also take $B$ as the homogeneous polynomial in $\phi'(\mathcal{I})$ such that 
\begin{align*}
\deg A=\deg B.
\end{align*}
By the definition of $\phi$, 
it is easy to check that ${\rm Ker}\ \phi=\langle X\rangle \subset \mathbb{Z}[X,H_{1},\ldots, H_{m}]$; therefore, 
we have
\begin{eqnarray*}
A-B\in {\rm Ker}\ \phi=\langle X\rangle.
\end{eqnarray*}
This means that there exists a polynomial $C\in \Z[X,\ H_{1},\ \cdots,\ H_{m}]$ such that 
\begin{eqnarray*}
A-B=CX.
\end{eqnarray*}
Because $A=B+CX\not\in \phi'(\mathcal{I})$ and $B\in \phi'(\mathcal{I})$, we have $CX\not\in \phi'(\mathcal{I})$.
Moreover, because we take $\deg A=\deg B$, 
$CX$ is also a homogeneous polynomial with $\deg A=\deg B=\deg CX$. 
Therefore, because $A,\ B\in {\rm Ker}\ \pi$, we have $CX$ is a homogeneous polynomial in ${\rm Ker}\ \pi\setminus \phi'(\mathcal{I})$.
Then, we have $\deg A=\deg CX=\deg C+\deg X=\deg C+2$.
Moreover, because $A$ is a minimal homogeneous polynomial in ${\rm Ker}\ \pi\setminus \phi'(\mathcal{I})$, 
we have that 
\begin{align*}
C\in \phi'(\mathcal{I})(\subset {\rm Ker}\ \pi).
\end{align*}
However, because $\phi'(\mathcal{I})$ is an ideal in $\mathbb{Z}[X,H_{1},\ldots, H_{m}]$, we see that 
\begin{align*}
CX\in \phi'(\mathcal{I}).
\end{align*}
This shows that $A=B+CX\in \phi'(\mathcal{I})$ and 
this gives the contradiction to that there is an element in ${\rm Ker}\ \pi\backslash\phi'(\mathcal{I})$.
Hence, we have ${\rm Ker}\ \pi\backslash \phi'(\mathcal{I})=\emptyset$, that is, ${\rm Ker}\ \pi=\phi'(\mathcal{I})={\rm Ker}\ \widetilde{\pi}$
by Lemma~\ref{1st_for_key_for_inj} (i).
\end{proof}

So we can prove the injectivity of $\Psi$.
\begin{lemma}
\label{inj2}
$\Psi$ is injective.
\end{lemma}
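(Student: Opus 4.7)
The plan is to deduce the injectivity of $\Psi$ directly from Lemma~\ref{key_for_inj} together with the surjectivity of $\widetilde{\pi}$, using the commutative diagram established just before Lemma~\ref{key_for_inj}. The point is that the technical work has already been done: everything we need is packaged in the identity $\mathrm{Ker}\,\widetilde{\pi} = \mathrm{Ker}\,\pi$ and the factorization $\pi = \Psi \circ \widetilde{\pi}$.

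First I would recall that $\widetilde{\pi}:\Z[X,H_{1},\ldots,H_{m}]\to \Z[\mathcal{G}]$ is surjective. This was observed right after its definition: since $\hat{\pi} = \widetilde{\pi}\circ\phi'$ and $\hat{\pi}$ is the natural projection (hence surjective), $\widetilde{\pi}$ must be surjective as well.

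Next, I would take an arbitrary element $a \in \Z[\mathcal{G}]$ with $\Psi(a)=0$ and lift it: by surjectivity of $\widetilde{\pi}$, pick $A \in \Z[X,H_{1},\ldots,H_{m}]$ with $\widetilde{\pi}(A)=a$. Then using the commutativity relation $\pi = \Psi\circ\widetilde{\pi}$, we compute
\begin{align*}
\pi(A) = \Psi(\widetilde{\pi}(A)) = \Psi(a) = 0,
\end{align*}
so $A \in \mathrm{Ker}\,\pi$. Invoking Lemma~\ref{key_for_inj}, which gives $\mathrm{Ker}\,\pi = \mathrm{Ker}\,\widetilde{\pi}$, we conclude $\widetilde{\pi}(A)=0$, i.e. $a=0$. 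Hence $\Psi$ is injective.

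There is essentially no obstacle at this stage — the real difficulty was hidden in proving Lemma~\ref{key_for_inj}, whose argument in turn depended on Lemma~\ref{key2} (producing, from hyperplanes with empty intersection, halfspaces with empty intersection) and on Theorem~\ref{main-theorem1-1} (the $x$-forgetful version). Combining Lemma~\ref{inj2} with Lemma~\ref{surj2} then yields that $\Psi$ is a ring isomorphism, completing the proof of Theorem~\ref{main-theorem1}.
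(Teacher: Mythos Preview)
Your proof is correct and follows essentially the same approach as the paper's: lift an element of $\mathrm{Ker}\,\Psi$ through the surjection $\widetilde{\pi}$, use $\pi=\Psi\circ\widetilde{\pi}$ to place the lift in $\mathrm{Ker}\,\pi$, and then invoke Lemma~\ref{key_for_inj} to conclude it already lies in $\mathrm{Ker}\,\widetilde{\pi}$. The only difference is notational.
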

\begin{proof}
Let $A$ be in ${\rm Ker}\ \Psi$.
Since $\widetilde{\pi}$ is surjective, 
there is an element $B\in \Z[X,\ H_{1},\ \cdots,\ H_{m}]$ 
such that $\widetilde{\pi}(B)=A$.
So we have $\pi(B)=\Psi\circ\widetilde{\pi}(B)=\Psi(A)=0$.
Hence $B\in {\rm Ker}\ \pi={\rm Ker}\ \widetilde{\pi}$ by Lemma~\ref{key_for_inj}.
Therefore, we have $A=\widetilde{\pi}(B)=0$.
This concludes that $\Psi$ is injective.
\end{proof}

Because of Lemma~\ref{surj2} and \ref{inj2}, we have that $\Psi$ is the isomorphism.
Consequently the proof of Theorem~\ref{main-theorem1} is complete, that is, we get
\begin{align*} 
H^{*}(\mathcal{G})\simeq \Z[\mathcal{G}].
\end{align*}

\section{Generators of $\mathbb{Z}[\widetilde{\mathcal{G}}]$ as $H^{*}(BT^{n})$-module}
\label{sect:7}

Let $\mathcal{G}=(\Gamma, \alpha, \nabla)$ be a $2n$-valent
$T^{*}\mathbb{C}^n$-modeled GKM graph and
$\mathbf{L}=\{L_1,\ldots, L_m\}$ be the set of all hyperplanes in
$\mathcal{G}$. 
Assume that $\mathcal{G}$ satisfies the two assumptions of
Theorem \ref{main-theorem1} so that
$H^*(\mathcal{G})\simeq \mathbb{Z}[\mathcal{G}]$.

\subsection{Simplicial complex associated to $\mathbf{L}$}.  
\label{sect:7.1}

Let
$\mathbf{L}=\{L_1,\ldots, L_m\}$.  Let $\Delta_{\mathbf{L}}$ denote
the simplicial complex associated to $\mathbf{L}$ defined as
follows. There is a vertex $v_i$ in $\Delta_{\mathbf{L}}$
corresponding to the hyperplane $L_i$ such that whenever
$L_{i_1}\cap\cdots \cap L_{i_k}\neq \emptyset$ in $\mathcal{G}$, the
vertices $\{v_{i_1},\ldots, v_{i_k}\}$ span a simplex in
$\Delta_{\mathbf{L}}$. In particular, for $1\leq i\leq d$, let
$\sigma_i=\langle v_{i_1},\ldots, v_{i_n}\rangle$ be the
$(n-1)$-dimensional simplex of $\Delta_{\mathbf{L}}$ corresponding to a
vertex ${\bf p}_{\sigma_i} :=L_{i_1}\cap\cdots\cap L_{i_n}$ of
$\mathcal{G}$.

Note that $\Delta_{\mathbf{L}}$ is pure i.e., all maximal faces (also called facets) are of
the same dimension $n-1$. Let $\Delta_{\mathbf{L}}(n-1)$ denote the set of
facets of $\Delta_{\mathbf{L}}$. Then
$d=|\Delta_{\mathbf{L}}(n-1)|$ which is also equal to
$|\mathcal{V}^{\Gamma}|$ the number of vertices of $\Gamma$.
For simplices $\tau$ and $\sigma$ in $\Delta_{\mathbf{L}}$,
by $\tau\preceq \sigma$ we mean that $\tau$ is a face of $\sigma$.

We say that $\Delta_{\mathbf{L}}$ is a {\it shellable
  simplicial complex} if the following holds: There is an ordering
$\sigma_1,\sigma_2,\ldots, \sigma_d$ of $\Delta_{\mathbf{L}}(n-1)$ such
that if $\Delta_j$ denotes the subcomplex generated by
$\sigma_1,\ldots, \sigma_j$ for each $1\leq j\leq d$, then
$\Delta_{i}\setminus \Delta_{i-1}$ has a unique minimal face $\mu_i$
for each $2\leq i\leq d$. We further let $\mu_1:=\emptyset$ to be the
unique minimal face of $\Delta_{1}\setminus \Delta_{0}$ where
$\Delta_{0}=\emptyset$ (see \cite[Section 2.1 p.79]{stanley}).
(Also see \cite[Definition 5.1.11]{BH} for other
equivalent definitions of shellability of a pure simplicial complex.)

\begin{example}
In the case when $n=1$, $\mathcal{G}$ is a $2$-valent
$T^*\mathbb{C}$-modelled GKM graph and the set of hyperplanes
$\mathbf{L}=\{L_1,\ldots, L_m\}$ of $\mathcal{G}$ coincides with the
set of vertices of $\mathcal{G}$ (see Figure~\ref{4dim-toricHK}). Here $\mathcal{G}$ corresponds to
the hyperplane arrangement of a $4$-dimensional toric
hyperK{$\ddot{\mathrm{a}}$}hler manifold. Furthermore, since
$L_i\cap L_j=\emptyset$ for every $i\neq j$, $1\leq i,j\leq m$, the
associated simplicial complex $\Delta_{\mathbf{L}}$ is a pure
$0$-dimensional simplicial complex consisting of a vertex (or
$0$-dimensional simplex) $v_i$ corresponding to $L_i$ for every
$1\leq i\leq m$. Then $\Delta_{\mathbf{L}}$ is seen to be trivially
shellable for any ordering of the set of vertices $\{v_1,\ldots, v_m\}$ which
are also the facets of $\Delta_{\mathbf{L}}$ in this case.


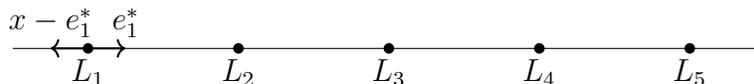
\begin{figure}[h]
\begin{tikzpicture}
\begin{scope}[xscale=1.0, yscale=1.0]
\draw (-5,0)--(5,0);

\fill (-4,0) coordinate (p) circle (2pt);
\node[below] at (-4,0) {$L_{1}$};
\fill (-2,0) coordinate (q) circle (2pt); 
\node[below] at (-2,0) {$L_{2}$};
\fill (0,0) coordinate (r) circle (2pt); 
\node[below] at (0,0) {$L_{3}$};
\fill (2,0) coordinate (s) circle (2pt);
\node[below] at (2,0) {$L_{4}$};
\fill (4,0) coordinate (t) circle (2pt);
\node[below] at (4,0) {$L_{5}$};

\draw[->, thick] (p)--(-3.5,0);
\node[above] at (-3.5,0) {$e_{1}^{*}$};
\draw[->, thick] (p)--(-4.5,0);
\node[above] at (-4.5,0) {$x-e_{1}^{*}$};
\end{scope}
\end{tikzpicture}
\caption{The $T^{*}\mathbb{C}$-modeled GKM graph $\mathcal{G}$ defined from a five vertices (hyperplanes) arrangement on the line. In this case, the hyperplane $L_{i}$ in $\mathcal{G}$ corresponds to the $0$-dimensional simplex $v_{i}$ in $\Delta_{\mathbf{L}}$ for $i=1,\ldots, 5$. This is defined from a $4$-dimensional toric hyperK{$\ddot{\mathrm{a}}$}hler manifold. Note that we omit the axial functions which are automatically determined.}
\label{4dim-toricHK}
\end{figure}
\end{example}

\begin{example}
Consider the simplicial complex $\Delta_{\mathbf{L}}$ corresponding to
the $T^*\mathbb{C}^2$-modelled GKM graph given in Figure
\ref{fig-non-toricHK}. We label the set of $5$ hyperplanes as
$L_1,\ldots, L_5$ where $L_i\cap L_{i+1}\neq \emptyset$ for
$1\leq i\leq 4$ and $L_5\cap L_1\neq \emptyset$ (see Figure~\ref{fig-non-toricHK2}). Then
$\Delta_{\mathbf{L}}$ consists respectively of the corresponding
vertices $v_1,\ldots, v_5$ and the $1$-simplices
$[v_1,v_2], [v_2,v_3], [v_3,v_4],[v_4,v_5],[ v_5,v_1]$ which are its
facets. For the ordering
\[\sigma_1=[v_1,v_2], \sigma_2=[v_2,v_3],
  \sigma_3=[v_3,v_4],\sigma_4=[v_4,v_5],\sigma_5=[ v_5,v_1]\] of
$\Delta_{\mathbf{L}}(1)$, we see that
$\Delta_1\setminus \Delta_0=\{\emptyset, \{v_1\}, \{v_2\},
[v_1,v_2]\}$ has the minimal element $\mu_1=\emptyset$,
$\Delta_2\setminus \Delta_1=\{\{v_3\}, [v_2,v_3]\}$ has the minimal
element $\mu_2=\{v_3\}$,
$\Delta_3\setminus \Delta_2=\{\{v_4\}, [v_3,v_4]\}$ has the minimal
element $\mu_3=\{v_4\}$,
$\Delta_4\setminus \Delta_3=\{\{v_5\}, [v_4,v_5]\}$ has the minimal
element $\mu_4=\{v_5\}$ and $\Delta_5\setminus \Delta_4=\{[v_5,v_1]\}$
has the minimal element $\mu_5=[v_5,v_1]$. Thus $\Delta_{\mathbf{L}}$
is a shellable simplicial complex.


\begin{figure}[h]
\begin{tikzpicture}
\begin{scope}[xscale=1.0, yscale=1.0]
\fill (-1,0) coordinate (q) circle (2pt);
\node[left] at (-1,0) {$p_{2}$};
\fill (1,-2) coordinate (t) circle (2pt); 
\node[below] at (1,-2) {$p_{5}$};
\fill (-1,-2) coordinate (p) circle (2pt); 
\node[below] at (-1.2,-2) {$p_{1}$};
\fill (3,0) coordinate (s) circle (2pt);
\node[right] at (3,0) {$p_{4}$};
\fill (3,4) coordinate (r) circle (2pt);
\node[right] at (3,4) {$p_{3}$};

\draw (-2,-1)--(4,5);
\draw (-1,2)--(-1,-3);
\draw (-2,-2)--(3,-2);
\draw (3,5)--(3,-1);
\draw (0,-3)--(5,2);

\node[above] at (0,-2) {$L_{1}$};
\node[right] at (-1,-1) {$L_{2}$};

\node[left] at (2,-1) {$L_{5}$};
\node[right] at (0.5,1.5) {$L_{3}$};

\node[left] at (3,1.5) {$L_{4}$}; 
\end{scope}
\end{tikzpicture}
\caption{The GKM graph $\mathcal{G}$ in Figure~\ref{fig-non-toricHK}. In this case, the hyperplane $L_{i}$ (resp.~ vertex $p_{i}$) in $\mathcal{G}$ corresponds to the $0$ (resp.~$1$)-dimensional simplex $v_{i}$ (resp.~$\sigma_{i}$) in $\Delta_{\mathbf{L}}$ for $i=1,\ldots, 5$.}
\label{fig-non-toricHK2}
\end{figure}
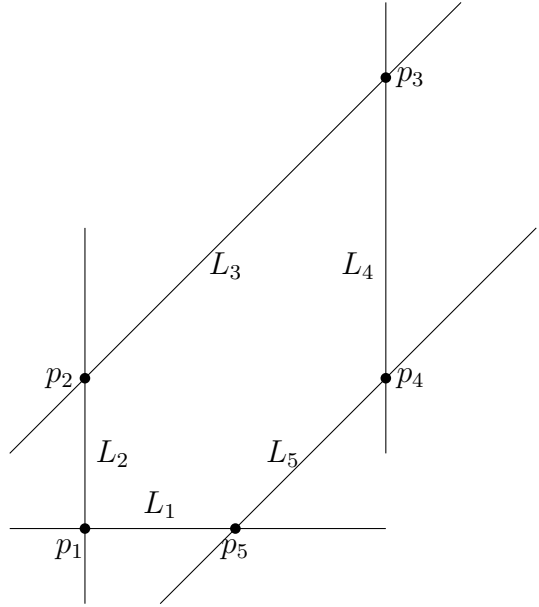

In subsection \ref{sect:7.5}, we shall show the shellability of
$\Delta_{\mathbf{L}}$ for a $T^*\mathbb{C}^2$-modelled GKM graph
$\mathcal{G}$ that is induced from the $8$-dimensional toric
hyperK{$\ddot{\mathrm{a}}$}hler manifold.
\end{example}

For $\gamma\in \Delta_{\mathbf{L}}$, let $j$ be the smallest $1\le j \le d$
such that $\gamma\preceq\sigma_j$. Then
$\gamma\in \Delta_j\setminus \Delta_{j-1}$. Thus it follows that
$\mu_j\preceq \gamma\preceq \sigma_j$. Hence there exists a
  unique $j$, $1\leq j\leq d$ such that $\gamma\in
  [\mu_j,\sigma_j]$ where
  $[\mu_j,\sigma_j]:=\{\gamma~~\mid~~\mu_j\preceq \gamma\preceq \sigma_j\}$. In other words we can write
  \begin{equation}\label{partition}\Delta_{\mathbf{L}}=[\mu_1,\sigma_1]\sqcup\cdots\sqcup
  [\mu_d,\sigma_d].
\end{equation}

If a simplicial complex $\Delta_{\mathbf{L}}$ satisfies
(\ref{partition}) then it is called partitionable (see \cite[p.80,
Section 2.1]{stanley}). In particular, shellable simplicial complexes
are partitionable.

Moreover, \begin{equation}\label{star}\mu_i\preceq \sigma_j
  \Rightarrow j\geq i. 
  \end{equation}

Let
\[\mathcal{E}_{{\bf p}_{\sigma_i}}=\{\epsilon^{+}_{i_1},\ldots, \epsilon^{+}_{i_n},
  \epsilon^{-}_{i_1},\ldots, \epsilon^{-}_{i_n}\}\] for
$1\leq i\leq d$. Recall from Definition \ref{def_modeled_graph} that the set
$\{\alpha(\epsilon_{i_j}^+), x\mid j=1,\ldots, n\}$ spans
$\mathfrak{t}_{\mathbb{Z}}^*\oplus\mathbb{Z} x$ i.e.,
\begin{equation}\label{smoothness}\langle
  \alpha(\epsilon_{i_1}^{+}),\ldots, \alpha(\epsilon_{i_n}^{+}),
  x\rangle=\mathfrak{t}_{\mathbb{Z}}^*\oplus \mathbb{Z} x.
\end{equation}

Let $\widetilde{\mathcal{G}}$ denote the $x$-forgetful graph
associated to $\mathcal{G}$ where
$\widetilde{\mathcal{G}}=(\Gamma, \widetilde{\alpha},\nabla)$ having
$\Gamma$ and $\nabla$ same as $\mathcal{G}$ and $\widetilde{\alpha}$
is the $x$-forgetful axial function defined as
$\widetilde{\alpha}=F\circ \alpha:\mathcal{E}\longrightarrow
H^2(BT^n)$ where
$F:H^2(BT^n)\bigoplus \mathbb{Z} x\longrightarrow H^2(BT^n)$ is the
$x$-forgetful map (see Section \ref{sect:5.2}). Recall that
\[\displaystyle \mathbb{Z}[\widetilde{\mathcal{G}}]:=\frac{\mathbb{Z}[L_1,\ldots,L_m]}{\langle\prod_{L\in \mathbf{L}'}
    L \mid \mathbf{L}'\in \mathbf{I}(\mathbf{L})\rangle}\]
where $\displaystyle\mathbf{I}(\mathbf{L})=\{\mathbf{L}'\subseteq \mathbf{L}
\mid \bigcap_{L\in\mathbf{L}'}L=\emptyset\}$. 

Let $x_{\gamma}$ denote the monomial
\[x_{\gamma}:=\prod_{j=1}^{p} L_{i_j}\] where
$\gamma=\langle v_{i_1},\ldots, v_{i_p}\rangle\in
  \Delta_{\mathbf{L}}$.

\subsection{The characteristic function associated to the hyperplane
  $L$}
\label{sect:7.2}

\begin{definition}
  Let $L$ be a connected $(2n-2)$-valent hyperplane in
  $\mathcal{G}$. Let $H$ and $\overline{H}$ be the unique halfspace
  such that $L=H\cap \overline{H}$.

For $p\in \mathcal{V}^{L}$,~~
\[\mathcal{E}^{L}_p=\{\epsilon_1^+,\ldots,\epsilon_{n-1}^+,\epsilon_1^{-},\ldots,\epsilon_{n-1}^{-}\}\]
is the $(n-1)$-pairs and 
\[\mathcal{E}^{\Gamma}_p=\{\epsilon_1^+,\ldots,\epsilon_{n}^+,\epsilon_1^{-},\ldots,\epsilon_{n}^{-}\}\]
is the $n$-pairs so that $n_{H}(p)=\epsilon_n^{+}$ and
$n_{\overline{H}}(p)=\epsilon_n^{-}$.

By (\ref{smoothness}), the axial functions
$\widetilde{\alpha}(\epsilon^{+}_{1}),
\widetilde{\alpha}(\epsilon^{+}_{2}), \ldots,
\widetilde{\alpha}(\epsilon^{+}_{n})$ form a basis for
$(\mathfrak{t}^n_{\mathbb{Z}})^*$.  The {\it characteristic function}
associated to $L$ is defined as the unique element
$\lambda(L)\in \mathfrak{t}^n_{\mathbb{Z}}$ such that
\begin{align*}
\langle 
\widetilde{\alpha}(\epsilon^+_i), \lambda(L)\rangle=\delta_{i,n}.
\end{align*}
\end{definition}

\begin{lemma}
  The definition of $\lambda(L)$ is independent of the choice of a vertex 
  $p\in \mathcal{V}^L$.
\end{lemma}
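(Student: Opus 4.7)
Since a hyperplane $L$ is by definition a connected subgraph of $\Gamma$, any two vertices of $L$ can be joined by a finite sequence of edges lying in $E^L$. Hence it suffices to prove that if $\epsilon' \in E^L$ with $p = i(\epsilon')$ and $q = t(\epsilon')$, then the element $\lambda(L)_p \in \mathfrak{t}^n_{\mathbb{Z}}$ defined using the basis at $p$ coincides with the one defined using the basis at $q$. By the uniqueness built into the definition, this in turn reduces to checking that $\lambda(L)_p$ satisfies the defining equations $\langle \widetilde{\alpha}(\epsilon_i^+(q)), \lambda(L)_p\rangle = \delta_{i,n}$ at the new vertex $q$.

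The first step is to identify how the basis transforms under the connection. Since $L$ is a $T^*\mathbb{C}^{n-1}$-modeled GKM subgraph, the restricted connection $\nabla^L_{\epsilon'}$ bijects $\mathcal{E}^L_p$ onto $\mathcal{E}^L_q$ and preserves $1$-dimensional pairs (Lemma \ref{lem_about_pair}). Moreover, the halfspace $H$ is in particular a pre-halfspace, so it is closed under $\nabla$, and hence $\nabla_{\epsilon'}(n_H(p)) = n_H(q)$. After relabeling we may therefore assume $\epsilon_i^+(q) = \nabla_{\epsilon'}(\epsilon_i^+(p))$ for all $i = 1,\dots, n$. Applying the $x$-forgetful map $F$ to the congruence relation for $\alpha$ gives integers $c_i \in \mathbb{Z}$ with
\begin{align*}
\widetilde{\alpha}(\epsilon_i^+(q)) = \widetilde{\alpha}(\epsilon_i^+(p)) + c_i\,\widetilde{\alpha}(\epsilon'), \qquad i = 1,\dots, n.
\end{align*}

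The second step is the crucial observation that $\widetilde{\alpha}(\epsilon')$ is annihilated by $\lambda(L)_p$. Indeed, since $\epsilon' \in \mathcal{E}^L_p$ and the $n$-th pair $\{\epsilon_n^+(p),\epsilon_n^-(p)\}$ is precisely $\mathcal{E}^\Gamma_p \setminus \mathcal{E}^L_p$, the edge $\epsilon'$ must be one of $\epsilon_j^{\pm}(p)$ for some $j \in \{1,\dots, n-1\}$. Using $\alpha(\epsilon_j^+) + \alpha(\epsilon_j^-) = x$ and applying $F$ yields $\widetilde{\alpha}(\epsilon') = \pm\widetilde{\alpha}(\epsilon_j^+(p))$, so by the defining property of $\lambda(L)_p$ we get $\langle \widetilde{\alpha}(\epsilon'), \lambda(L)_p\rangle = 0$.

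Combining these, for $i = 1, \dots, n-1$ both terms on the right side vanish against $\lambda(L)_p$, while for $i = n$ only the first term survives and equals $1$. Thus $\lambda(L)_p$ satisfies the defining equations for $\lambda(L)_q$, and uniqueness forces $\lambda(L)_p = \lambda(L)_q$. The only delicate point is verifying that the connection along an edge of $L$ correctly identifies the normal direction at $p$ with the normal direction at $q$, but this is guaranteed by the closedness of the halfspace $H$ under $\nabla$ established in Section \ref{sect:3.2}; the rest is a routine dualization argument.
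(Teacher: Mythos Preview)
Your proof is correct and follows essentially the same approach as the paper's: reduce by connectedness to adjacent vertices in $L$, use that the connection preserves $\mathcal{E}^L$ and (via closedness of the halfspace $H$) sends $n_H(p)$ to $n_H(q)$, then combine the congruence relation with the fact that $\langle\widetilde{\alpha}(\epsilon'),\lambda(L)_p\rangle=0$ for $\epsilon'\in\mathcal{E}^L_p$ to verify the defining equations at $q$. Your write-up is slightly more explicit than the paper's in spelling out the transformation $\widetilde{\alpha}(\epsilon_i^+(q)) = \widetilde{\alpha}(\epsilon_i^+(p)) + c_i\,\widetilde{\alpha}(\epsilon')$ and the dualization step, but the ideas are identical.
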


\begin{proof}
  Let $\epsilon=pq\in \mathcal{E}^L$, in particular let $\epsilon=\epsilon_j^+$ for
  some $1\leq j\leq n-1$. Here $i(\epsilon)=p$ and $t(\epsilon)=q$. Then under the
  connection ${\nabla}_{\epsilon}$, $\mathcal{E}_p^{\Gamma}$, the set of edges
  around $p$, maps bijectively onto $\mathcal{E}_q^{\Gamma}$, the set of
  edges around $q$.  Since the hyperplane $L$ is closed under the
  connection $\nabla$, $\mathcal{E}_p^{L}$ maps bijectively onto
  $\mathcal{E}_q^{L}$.  Moreover, since a halfspace $H$ is closed
  under $\nabla$, it follows that ${\nabla_{\epsilon}} (n_{H}(p))=n_{H}(q)$ so
  that
  $\widetilde{\alpha}({\nabla_{\epsilon}}
  (n_{H}(p)))\equiv \widetilde{\alpha}(n_{H}(q)) \mod \widetilde{\alpha}(\epsilon)$.  Moreover, since
  $\epsilon\in \mathcal{E}^{L}$ and $\nabla_{\epsilon}(\epsilon)=\overline{\epsilon}$ by definition of $\nabla_{\epsilon}$ it follows that the elements
  $\widetilde{\alpha}(\epsilon^{+}_1),\ldots,\widetilde{\alpha}(\epsilon_{n-1}^+)$
  and
  $\widetilde{\alpha}({\nabla}_{\epsilon}(\epsilon^{+}_1)),\ldots,\widetilde{\alpha}({\nabla}_{\epsilon}(\epsilon_{n-1}^+))$
  span the same subspace of
  $(\mathfrak{t}^n_{\mathbb{Z}})^*$. Further, since
  $\epsilon\in \mathcal{E}^L$ and
  $\langle\widetilde{\alpha}(\epsilon),\lambda(L)\rangle=0$, by the
  congruence relation we have
\[\langle 
  \widetilde{\alpha}({\nabla}_{\epsilon}(\epsilon_{n}^+)),\lambda(L)\rangle= \langle
  \widetilde{\alpha}(\epsilon_n^{+}),\lambda(L)\rangle.\] Thus for
the $n$-pairs
$\mathcal{E}^{\Gamma}_q=\{\nabla_{\epsilon}(\epsilon_1^+),\ldots,\nabla_{\epsilon}(\epsilon_{n}^+),
\nabla_{\epsilon}(\epsilon_1^{-}),\ldots,\nabla_{\epsilon}(\epsilon_{n}^{-})\}$ we have
\[\langle \lambda(L), \widetilde{\alpha}(\nabla_{\epsilon}(\epsilon_i^{+})) \rangle=\delta_{i,n}.\] Hence without loss of generality we could
have started with the vertex $q\in \mathcal{V}^{L}$ to define
$\lambda(L)$.  Moreover, since $L$ is connected, by repeating the
above procedure for an edge $\epsilon'$ such that $i(\epsilon')=q$, it follows that
the definition of $\lambda(L)$ is independent of the choice of
$p\in \mathcal{V}^{L}$.
\end{proof}

\subsection{The $H^*(BT^{n})$-algebra structure of
  $H^*(\widetilde{\mathcal {G}})$} 
\label{sect:7.3}

Since
$\displaystyle H^*(\widetilde{\mathcal G})\subset \bigoplus_{p\in
  \mathcal{V}}H^{*}_{T}(p)\simeq \bigoplus_{p\in \mathcal{V}}H^*(BT^n)$, the ring
$H^*(\widetilde{\mathcal G})$ may be regarded as the
$H^*(BT^{n})$-submodule of
$\displaystyle \bigoplus_{p\in \mathcal{V}}H^*(BT^{n})$.  In Theorem
\ref{module-generators} of this section, which is the second main
theorem of this paper, we determine module generators of
$H^*(\widetilde{\mathcal G})$ as a $H^*(BT^{n})$-module.  For this
purpose, we begin with the following lemma (also see \cite{MMP} for
the corresponding statement on torus graphs).

\begin{lemma}\label{$H^*(BT^n)$-algebra}
(i) The $H^*(BT^{n})$-module structure on $H^*(\widetilde{\mathcal G})$ is obtained from the following map from  $H^{2}(BT^{n})$ to $H^{*}(\widetilde{\mathcal G})$:  
\begin{align*}
H^{2}(BT^{n})\ni u\mapsto \displaystyle\sum_{i=1}^m\langle u, \lambda(L_{i})\rangle \cdot \tau_{L_i}\in
  H^{*}(\widetilde{\mathcal{G}}).
\end{align*}
Moreover, $\Psi':H^{*}(\widetilde{\mathcal{G}})\to \mathbb{Z}[\widetilde{\mathcal{G}}]$ is an isomorphism
  of $H^*(BT^{n})$-algebras where the algebra structure on
  $\mathbb{Z}[\widetilde{\mathcal{G}}]$ is obtained by sending
  $u\in H^2(BT^n)$ to the element
  $\displaystyle\sum_{i=1}^m\langle u, \lambda(L_{i})\rangle \cdot L_i\in
  \mathbb{Z}[\widetilde{\mathcal{G}}]$.

  (ii) We therefore have the following presentation for
  $H^*(\widetilde{\mathcal{G}})$ as an $H^*(BT^{n})$-algebra:
  \[H^*(\widetilde{\mathcal{G}})\simeq \frac{H^*(BT^n)[L_1,\ldots,
      L_m]}{\langle\prod_{L\in \mathbf{L}'}L\mid \mathbf{L}'\in
      \mathbf{I}(\mathbf{L}) ~; ~\sum_{i=1}^m\langle u,
      \lambda(L_{i})\rangle \cdot L_i-u ,~~\forall ~u\in H^2(BT^n)\rangle}\]

\end{lemma}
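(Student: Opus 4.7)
The plan is to first verify that the prescribed formula actually defines a constant element of $H^*(\widetilde{\mathcal{G}})$, and then to promote the ring isomorphism of Theorem~\ref{main-theorem1-1} to an isomorphism of $H^*(BT^n)$-algebras; part (ii) will follow formally.

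The key point of (i) is the pointwise identity
\[
\sum_{i=1}^{m}\langle u,\lambda(L_{i})\rangle\,\tau_{L_{i}}(p)\;=\;u
\qquad\text{for every } u\in H^{2}(BT^{n}) \text{ and every } p\in\mathcal{V}.
\]
To prove it, fix a vertex $p$ and recall from Lemma~\ref{key-properties-hyperplane} that the hyperplanes containing $p$ are in bijection with the $n$ one-dimensional pairs in $\mathcal{E}_{p}$: write $\mathcal{E}_{p}=\{\epsilon_{j}^{+},\epsilon_{j}^{-}\}_{j=1}^{n}$ and let $L_{i_{j}}$ be the hyperplane with $\mathcal{E}_{p}\setminus\mathcal{E}^{L_{i_{j}}}_{p}=\{\epsilon_{j}^{+},\epsilon_{j}^{-}\}$. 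By definition of $\tau_{L}$ one has $\tau_{L_{i}}(p)=0$ unless $i\in\{i_{1},\dots,i_{n}\}$, in which case (for the halfspace $H$ used to define $\tau_{L_{i_{j}}}$, with $n_{H}(p)=\epsilon_{j}^{+}$) we have $\tau_{L_{i_{j}}}(p)=\widetilde{\alpha}(\epsilon_{j}^{+})$. On the other hand, the defining relation of the characteristic function says that $\{\lambda(L_{i_{j}})\}_{j=1}^{n}$ is the basis of $\mathfrak{t}^{n}_{\mathbb{Z}}$ dual to $\{\widetilde{\alpha}(\epsilon_{j}^{+})\}_{j=1}^{n}$. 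Expanding $u$ in this basis then yields the identity. One must also observe that replacing $H$ by its opposite $\overline{H}$ flips simultaneously the sign of $\tau_{L_{i_{j}}}$ (because $F\circ\tau_{\overline{H}}=-\tau_{L_{i_{j}}}$) and of $\lambda(L_{i_{j}})$ (because $\widetilde{\alpha}(n_{\overline{H}}(p))=-\widetilde{\alpha}(n_{H}(p))$), so the sum is independent of all choices.

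Once this is established, the constant function $p\mapsto u$ lies in $H^{*}(\widetilde{\mathcal{G}})$ and is visibly an integer combination of the Thom classes $\tau_{L_{i}}$; thus $H^{*}(\widetilde{\mathcal{G}})$ is an $H^{*}(BT^{n})$-algebra via $u\mapsto\sum_{i}\langle u,\lambda(L_{i})\rangle\tau_{L_{i}}$, compatibly with the diagonal $H^{*}(BT^{n})$-algebra structure on $\bigoplus_{p}H^{*}(BT^{n})$. Transporting this structure through the ring isomorphism $\Psi':\mathbb{Z}[\widetilde{\mathcal{G}}]\to H^{*}(\widetilde{\mathcal{G}})$ of Theorem~\ref{main-theorem1-1} endows $\mathbb{Z}[\widetilde{\mathcal{G}}]$ with the asserted $H^{*}(BT^{n})$-algebra structure and makes $\Psi'$ an $H^{*}(BT^{n})$-algebra isomorphism. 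For (ii) one simply takes the presentation of $\mathbb{Z}[\widetilde{\mathcal{G}}]$ given in Section~\ref{sect:5.2}, extends scalars to $H^{*}(BT^{n})$, and kills the relations $u-\sum_{i}\langle u,\lambda(L_{i})\rangle L_{i}$ that encode the algebra structure; surjectivity of the resulting map follows from the presentation of $\mathbb{Z}[\widetilde{\mathcal{G}}]$, and the relations are exactly those forced by (i).

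The main obstacle is the first step, specifically the bookkeeping of signs coming from the non-canonical choice of a halfspace attached to each hyperplane: one must ensure that the same convention is used in the definition of $\tau_{L}$ and of $\lambda(L)$, and check that any consistent choice produces the same linear combination. Once the duality $\{\widetilde{\alpha}(\epsilon_{j}^{+})\}\leftrightarrow\{\lambda(L_{i_{j}})\}$ is set up correctly at a single vertex, the rest of the argument is a short duality expansion together with a routine application of Theorem~\ref{main-theorem1-1}.
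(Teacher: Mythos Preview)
Your proposal is correct and follows essentially the same route as the paper: both arguments reduce (i) to the pointwise identity $\sum_i \langle u,\lambda(L_i)\rangle\,\tau_{L_i}(p)=u$, prove it by observing that at each vertex $p$ only the $n$ hyperplanes through $p$ contribute and that $\{\lambda(L_{i_j})\}$ is the dual basis to $\{\widetilde{\alpha}(\epsilon_j^+)\}$, and then transport the resulting $H^*(BT^n)$-structure through the isomorphism of Theorem~\ref{main-theorem1-1}. Your extra remark on the independence of the choice of halfspace is a harmless consistency check the paper leaves implicit (the conventions having been fixed in Section~\ref{sect:5.1}).
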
  
\begin{proof}
  Let $p\in \mathcal{V}^{L}$ where
  $p=L_{i_1}\cap \cdots \cap L_{i_n}$. Then by Section \ref{sect:5.1}
  we have \begin{equation}\label{algebra}\sum_{i=1}^m\langle u,
    \lambda(L_{i})\rangle \cdot \tau_{L_i}(p)=\sum_{j=1}^n\langle u,
    \lambda(L_{i_j})\rangle \cdot
    \widetilde{\alpha}(n_{H_{i_j}}(p)).\end{equation} Note that
  $n_{H_{i_j}}(p)=\epsilon_j^+(p)$ for $1\leq j\leq n$ so that
  $\widetilde{\alpha}(n_{H_{i_j}}(p))$ for $1\leq j\leq n$ form a
  basis of $\mathfrak{t}^*_{\mathbb{Z}}$. Since
  $\lambda(L_{i_j})\in \mathfrak{t}_{\mathbb{Z}}$ for $1\leq j\leq n$
  is the corresponding dual basis, the right hand side of
  (\ref{algebra}) is nothing but $u$.  Thus
  \begin{equation}\label{eu}(\sum_{i=1}^m\langle u, \lambda(L_{i})\rangle \cdot
    \tau_{L_i})(p)=u~~\mbox{for every}~~ p\in \mathcal{V}.
    \end{equation} 
    Since $p\in \mathcal{V}$ was arbitrary from
  Section \ref{sect:5.3} it follows that the $H^*(BT^n)$-algebra structure
  defined above is canonical corresponding to the diagonal inclusion
  of $H^*(BT^n)$ in $\bigoplus_{i=1}^{d}H^{*}(BT^{n})=(H^*(BT^n))^d$.

Finally, by definition of $\Psi'$ in Section~\ref{sect:5.2}, we also have that the $H^{*}(BT^{n})$-algebra structure on $\mathbb{Z}[\widetilde{\mathcal{G}}]$ is obtained as in the statement.
\end{proof} 

The following theorem is the second main theorem in this paper.

\begin{theorem}
\label{module-generators}
Let $\Delta_{\mathbf{L}}$ be the simplicial complex defined by the
hyperplanes $\{L_{1},\ldots, L_{m}\}$ of $\widetilde{\mathcal{G}}$.
Suppose that $\Delta_{\mathbf{L}}$ is a shellable simplicial complex
with respect to ordering $\sigma_1,\ldots, \sigma_d$ of
$\Delta_{\mathbf{L}}(n)$. In particular,
$\Delta_{\mathbf{L}}$ is partitionable with partition
(\ref{partition}) and (\ref{star}) holds. Then the following
statements hold:

  (i) For $\gamma\in \Delta_{\mathbf{L}}$, 
let
  $x_{\gamma}={L_{j_1}}\cdots {L_{j_p}}\in
  \mathbb{Z}[\widetilde{\mathcal{G}}]$ where
  $\gamma=\langle v_{j_1},\ldots, v_{j_p}\rangle$. 
Then there exists an element $u\in H^2(BT^n)$ such that
\begin{align*}
L_{j_1}\cdot x_{\gamma}=-\sum_{k}\langle
    u,\lambda(L_{j_k}) \rangle \cdot
    x_{\gamma_k}+u\cdot x_{\gamma}
\end{align*}
where $k$ runs through $1\le k\le m$ such that $k\not\in \{j_{1},\ldots, j_{n}\}$ and $\gamma_{k}=\langle v_{k},v_{j_1},\ldots, v_{j_p}\rangle$.

(ii) Let $\eta\preceq \gamma\preceq \theta$ be simplices in
$\Delta_{\mathbf{L}}$. Then we can write
\[x_{\gamma}=\sum_{k} c_k\cdot x_{\eta_{k}}+c\cdot x_{\eta}\]
for $c_k,c\in H^{*}(BT^{n})$ and $\eta_{k}\npreceq \theta$.

(iii) The monomials $x_{\mu_i}$ for $1\leq i\leq d$ form a basis of
$\mathbb{Z}[\widetilde{\mathcal{G}}]$ as $H^{*}(BT^{n})$-module.

(iv) Let $f\in \mathbb{Z}[\widetilde{\mathcal{G}}]$ and
\begin{equation}\label{coefficients} f=\sum_{j=1}^da_j\cdot x_{\mu_j}\end{equation} for unique
$a_j\in H^*(BT^n)$. Let $i=i(f)$ be the smallest $1\le i\le d$ such that
$a_i\neq 0$. Then we can determine the coefficients $a_j$, $j\geq i$
iteratively as follows: We have
$a_i=\frac{\rho_{\mathbf{p}_{\sigma_{i}}(f)}}{\rho_{\mathbf{p}_{\sigma_i}}(x_{\mu_{i}})}$. Suppose
$a_i, a_{i+1},\ldots, a_{j-1}$ are determined by induction then
\[\displaystyle a_{j}=\frac{\rho_{\mathbf{p}_{\sigma_{j}}}(f-\sum_{k=i}^{j-1}a_k\cdot
  x_{\mu_k})} {\rho_{\mathbf{p}_{\sigma_j}}(x_{\mu_{j}})}.\] 
\end{theorem}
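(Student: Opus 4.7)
The plan is to prove parts (i)--(iv) sequentially, with Lemma~\ref{$H^*(BT^n)$-algebra} serving as the fundamental computational tool and the shellability data (\ref{partition}) together with (\ref{star}) providing the combinatorial backbone.

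For (i), I would begin with the identity $u = \sum_{i=1}^{m}\langle u,\lambda(L_i)\rangle\cdot L_i$ in $\mathbb{Z}[\widetilde{\mathcal{G}}]$ from Lemma~\ref{$H^*(BT^n)$-algebra}, valid for every $u\in H^2(BT^n)$. Multiplying by $x_\gamma$ and splitting the resulting sum by whether the index $i$ lies in $\{j_1,\ldots,j_p\}$: for $i\notin\{j_1,\ldots,j_p\}$ one gets $\pm x_{\gamma\cup\{v_i\}}$ (or zero when the union is not a simplex), while $i\in\{j_1,\ldots,j_p\}$ gives duplicated products. Choosing $u$ so that $\langle u,\lambda(L_{j_1})\rangle=1$ and $\langle u,\lambda(L_{j_l})\rangle=0$ for $l=2,\ldots,p$ -- possible because the $\lambda(L_{j_l})$'s extend to a $\mathbb{Z}$-basis of $\mathfrak{t}_{\mathbb{Z}}$ at $\mathbf{p}_\sigma$ for any facet $\sigma\succeq\gamma$ -- isolates the term $L_{j_1}\cdot x_\gamma$ and yields the stated formula.

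For (ii), I would induct on $\dim\gamma-\dim\eta$, the base case $\gamma=\eta$ being trivial. In the inductive step, pick $v_{s_1}\in\gamma\setminus\eta$, set $\gamma'=\gamma\setminus\{v_{s_1}\}$ (so $\eta\preceq\gamma'\prec\gamma$ and $\gamma'\preceq\theta$), and apply the identity from Lemma~\ref{$H^*(BT^n)$-algebra} to $u\cdot x_{\gamma'}$ with $u\in H^2(BT^n)$ chosen so that $\langle u,\lambda(L_{s_1})\rangle=1$ and $\langle u,\lambda(L_v)\rangle=0$ for every other vertex $v$ of some facet containing $\theta$. By the choice of $u$, the only surviving contributions come from $i=s_1$ (giving $x_\gamma$) and $v_i\notin\theta$ (giving $x_{\gamma'\cup\{v_i\}}$, which is not a face of $\theta$ since it contains $v_i\notin\theta$). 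Rearranging produces $x_\gamma = u\cdot x_{\gamma'} - \sum_{v_i\notin\theta}\langle u,\lambda(L_i)\rangle\,x_{\gamma'\cup\{v_i\}}$; applying the inductive hypothesis to $x_{\gamma'}$ completes the expansion. I will keep track that every resulting $\eta_k$ contains $\eta$, since this is essential for (iii).

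For (iii), linear independence follows from localization: if $\sum_i a_i x_{\mu_i}=0$, evaluating at $\mathbf{p}_{\sigma_j}$ annihilates all terms with $i>j$ by (\ref{star}), and the diagonal entry $\rho_{\mathbf{p}_{\sigma_j}}(x_{\mu_j})$ is a product of primitive axial functions (nonzero in the integral domain $H^*(BT^n)$), so $a_j=0$ follows iteratively. For spanning, iterated use of (i) reduces any element of $\mathbb{Z}[\widetilde{\mathcal{G}}]$ to an $H^*(BT^n)$-combination of square-free monomials $x_\gamma$ with $\gamma\in\Delta_{\mathbf{L}}$, so it suffices to treat these. I would induct \emph{downward} on $j(\gamma)$, the unique index with $\gamma\in[\mu_{j(\gamma)},\sigma_{j(\gamma)}]$: apply (ii) with $\eta=\mu_{j(\gamma)}$, $\theta=\sigma_{j(\gamma)}$ to write $x_\gamma=\sum_k c_k x_{\eta_k}+c\cdot x_{\mu_{j(\gamma)}}$ with $\eta_k\not\preceq\sigma_{j(\gamma)}$. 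Since the construction in (ii) produces $\eta_k\succeq\mu_{j(\gamma)}$, the chain $\mu_{j(\gamma)}\preceq\eta_k\preceq\sigma_{j(\eta_k)}$ together with (\ref{star}) forces $j(\eta_k)\ge j(\gamma)$; the condition $\eta_k\not\preceq\sigma_{j(\gamma)}$ rules out equality, giving the strict inequality $j(\eta_k)>j(\gamma)$, so the inductive hypothesis applies (the base case $j(\gamma)=d$ is vacuous).

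For (iv), (iii) guarantees a unique expansion $f=\sum_i a_i x_{\mu_i}$ with $a_i\in H^*(BT^n)$. Applying $\rho_{\mathbf{p}_{\sigma_j}}$ and using (\ref{star}) to kill the terms with $i>j$ gives a lower-triangular system whose diagonal entries $\rho_{\mathbf{p}_{\sigma_j}}(x_{\mu_j})$ are nonzero, and iteratively solving for $a_j$ yields the stated formula. The hardest step I anticipate is the spanning part of (iii): establishing the strict inequality $j(\eta_k)>j(\gamma)$ requires reading off the explicit form of the $\eta_k$'s produced by the proof of (ii) -- specifically that each contains $\mu_{j(\gamma)}$ -- so that (\ref{star}) can be invoked to control their partition class.
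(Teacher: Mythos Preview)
Your proposal is correct and follows essentially the same approach as the paper: the identity $u=\sum_i\langle u,\lambda(L_i)\rangle L_i$ from Lemma~\ref{$H^*(BT^n)$-algebra} with $u$ taken dual to a chosen vertex of a facet drives (i) and (ii), downward induction on the partition index together with (\ref{star}) gives the spanning in (iii), and the triangularity of $\rho_{\mathbf{p}_{\sigma_j}}(x_{\mu_i})$ gives linear independence and (iv). You are right to flag that the $\eta_k$'s produced in (ii) must contain $\eta=\mu_{j(\gamma)}$ so that (\ref{star}) applies in (iii); the paper records this fact inside the proof of (ii) (``Also $\eta\preceq\eta_k$'') but does not state it in the theorem, so your explicit tracking is exactly what is needed.
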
  

\begin{proof} (i) Let $\sigma=\langle v_{j_1},\ldots, v_{j_n}\rangle$
be an $(n-1)$-simplex containing $\gamma$ and let
${\bf p}_{\sigma}=L_{j_1}\cap\cdots\cap L_{j_n}$ be the corresponding
vertex in $\widetilde{\mathcal{G}}$. Consider
$\lambda(L_{j_1})=e_{j_1}\in \mathfrak{t}^n_{\mathbb{Z}}$ which is
dual to $u:=\widetilde{\alpha}(\epsilon^{+}_{j_1})$.
In this case, by Lemma~\ref{$H^*(BT^n)$-algebra}, 
we have that
\begin{align*}
u=\sum_{i=1}^{m}\langle u, \lambda(L_{i})\rangle \cdot L_i.
\end{align*}
Since $\langle u,e_{j_i}\rangle=\delta_{i,1}$ for $i=1,\ldots, n$,
we have the relation
\begin{equation}
\label{rel1} 
L_{j_1}=-\sum_{k}\langle
  u, \lambda(L_{k}) \rangle
  \cdot L_k+u
\end{equation}
as an $H^{*}(BT^{n})$-module, where the sum on the
right hand side is over those $1\leq k\leq m$ such that
$k\notin\{j_1,\ldots, j_n\}$.  Multiplying (\ref{rel1}) by
$x_{\gamma}$ we get
\begin{equation}\label{rel2} L_{j_1}\cdot x_{\gamma}= -\sum\langle
  u,\lambda(L_{k}) \rangle
  \cdot x_{\gamma_{k}}+u\cdot x_{\gamma} \end{equation}
where $\gamma_{k}\in \Delta_{\mathbf{L}}$ is spanned by the
vertices $v_{k},v_{j_1},\ldots, v_{j_p}$ and
$x_{\gamma_k}=L_k\cdot L_{j_1}\cdots L_{j_p}$.
This proves (i)

(ii) Let $\eta\prec \gamma\preceq \theta$ be simplices in
$\Delta_{\mathbf{L}}$.  Let
$\gamma=\langle v_{j_1},\ldots, v_{j_p}\rangle$,
$\eta=\langle v_{j_1},\ldots, v_{j_r}\rangle\in \Delta_{\mathbf{L}}$
and $\theta=\langle v_{j_1},\ldots, v_{j_l}\rangle$ and $r<p\leq l$.
Because the following argument can also apply for any such $l$, we can
assume that $l=n$ i.e., $\theta$ is $(n-1)$-dimensional.  Thus
$\lambda(L_{j_1}) ,\ldots, \lambda(L_{j_n})$ is the basis of
$\mathfrak{t}_{\mathbb{Z}}$ dual to the basis
$\widetilde{\alpha}(\epsilon_{j_1}),\ldots,
\widetilde{\alpha}(\epsilon_{j_n})$ of $\mathfrak{t}^*_{\mathbb{Z}}$.
Let $u:=\widetilde{\alpha}(\epsilon_{j_p})\in H^{*}(BT^{n})$.  Thus,
with the similar reason to obtain \eqref{rel1} as in the proof of (i),
we have the following relation in
$\mathbb{Z}[\widetilde{\mathcal{G}}]$:
\begin{equation} 
\label{e1} L_{j_p}+\sum_{k}\langle u,
    \lambda(L_{k})\rangle \cdot L_k-u=0
\end{equation}
where $k$ runs through $\{1,\ldots, m\}\setminus \{j_1,\ldots, j_n\}$ in the equation.  Multiplying  (\ref{e1}) by ${L_{j_1}}\cdots {L_{j_{p-1}}}$, we get
\begin{equation} \label{e'2} x_{\gamma}+\sum_{k}\langle u,
    \lambda(L_{k})\rangle \cdot x_{\eta_{k}}- u\cdot x_{\gamma'}=0
\end{equation} 
where $\eta_{k}=\langle
  v_{k}, v_{j_1},\ldots, v_{j_{p-1}}\rangle$,
  $\gamma'=\langle v_{j_1},\ldots, v_{j_{p-1}} \rangle\in
  \Delta_{\mathbf{L}}$.
Note that $\eta_{k}\npreceq \theta$ since $k\notin \{j_1,\ldots,
  j_n\}$. Also $\eta\preceq \eta_{k}$ and $\eta\preceq\gamma'$, since
  $r\leq p-1$. Therefore, $\eta\preceq\gamma'\prec\gamma\preceq \theta$.
Now, proceeding by downward induction on $p$ and
  repeating the above arguments for $\gamma'$ we arrive at (ii).

  (iii) By the ring structure of $\mathbb{Z}[\widetilde{\mathcal{G}}]$
  defined in Section~\ref{sect:5.2}, for every element in
  $\mathbb{Z}[\widetilde{\mathcal{G}}]$ can be written by the sum of
  $x_{\gamma}$'s for $\gamma\in \Delta_{\mathbf{L}}$ with
  $H^{*}(BT^{n})$-coefficients.  Therefore, for every
  $\gamma\in \Delta_{\mathbf{L}}$, it suffices to show that
  $x_{\gamma}$ lies in the $H^{*}(BT^{n})$-submodule of
  $\mathbb{Z}[\widetilde{\mathcal{G}}]$ spanned by $x_{\mu_i}$ for
  $1\leq i\leq d$, where $\mu_{i}$ is the minimal face which appears
  in
  $\Delta_{\mathbf{L}}=[\mu_1,\sigma_1]\sqcup\cdots\sqcup
  [\mu_d,\sigma_d]$. Since $\Delta_{\mathbf{L}}$ is a shellable
  simplicial complex, for every $\gamma\in \Delta_{\mathbf{L}}$ there
  exists the unique $1\leq i\leq d$ such that
  $\mu_i\preceq \gamma\preceq \sigma_i$ (see (\ref{star})).  We prove
  (iii) by downward induction on $i$.

  If $\gamma\in [\mu_{d},\sigma_{d}]$, we are done since
  $\mu_d=\sigma_d=\gamma$, i.e., $x_{\gamma}=x_{\mu{d}}$ and hence
  lies in the $H^{*}(BT^{n})$-span of $x_{\mu_{d}}$.  
  Assume that for
  every
  $\gamma\in [\mu_{i+1},\sigma_{i+1}]\sqcup\cdots\sqcup
  [\mu_d,\sigma_d]$,
  $x_{\gamma}\in H^{*}(BT^{n}) x_{\mu_{i+1}}\oplus\cdots\oplus
  H^{*}(BT^{n}) x_{\mu_{d}}$.  If
  $\gamma\in [\mu_{i},\sigma_{i}]$, then by (ii) we can write
\begin{equation}\label{e2} x_{\gamma}=\sum_{\mu_i\prec\gamma_j\nprec
    \sigma_i}c_j\cdot x_{\gamma_j}+c\cdot x_{\mu_i}
\end{equation} for
$c_j,c\in H^{*}(BT^{n})$.
Now there is the unique $r$ such that $\mu_r\preceq
\gamma_j\preceq\sigma_r$. This implies by \eqref{star} that $r>i$.

Thus by induction assumption $x_{\gamma_j}$ lies in the
$H^{*}(BT^{n})$-span of $x_{\mu_q}$ for $q\geq r$.
This together with (\ref{e2}) implies that $x_{\gamma}$ lies in the
$H^{*}(BT^{n})$-span of $x_{\mu_q}$ for $q\geq i$. 

It remains now to show that $x_{\mu_i}$ for $1\leq i\leq d$ are
linearly independent.
Suppose that there exist $a_i\in H^{*}(BT^{n})$ for $1\leq i\leq d$
such that
\begin{equation}\label{linearrelation}\sum_{i=1}^d a_i\cdot
  x_{\mu_i}=0\end{equation} in
$\mathbb{Z}[\widetilde{\mathcal{G}}]$. Let
$i\in \{1,\ldots, n\}$ be the smallest integer such that $a_i\neq 0$.

Recall that
$\sigma_i=\langle v_{i_1},\ldots, v_{i_n}\rangle$ where
$\mathbf{p}_{\sigma_i}=L_{i_1}\cap\cdots \cap L_{i_n}$ in
$\mathcal{G}$. 
Consider the localization map
$\rho=(\rho_{\mathbf{p}_{\sigma_j}})_{j=1}^d$ defined in Section
\ref{sect:5.3}. By (\ref{star}) and the definition of
$\rho_{\mathbf{p}_{\sigma_i}}$ it follows that
$\rho_{\mathbf{p}_{\sigma_i}}(x_{\mu_j})=0$ for $j>i$ (since
$\mu_j\npreceq \sigma_i$ there exists $L_k$ in $\mathbf{L}$ such that
the corresponding vertex $v_k\in \mu_j$ and $v_k\notin \sigma_i$ in
$\Delta_{\mathbf{L}}$. Thus $\rho_{\mathbf{p}_{\sigma_i}}(L_k)=0$ in
$\mathbb{Z}[\widetilde{\mathcal{G}}]_{\mathbf{p}_{\sigma_i}}$). Thus applying
$\rho_{\mathbf{p}_{\sigma_i}}$ on (\ref{linearrelation}) we get
\[\rho_{\mathbf{p}_{\sigma_i}}(\sum_{j=1}^n a_j\cdot
  x_{\mu_j})=\rho_{\mathbf{p}_{\sigma_i}}(\sum_{j\geq i} a_j\cdot
  x_{\mu_j})=\rho_{\mathbf{p}_{\sigma_i}} (a_i \cdot
  x_{\mu_i})=\rho_{\mathbf{p}_{\sigma_i}} (a_i)\cdot
  \rho_{\mathbf{p}_{\sigma_i}} (x_{\mu_i})=0\] in the integral domain
$\mathbb{Z}[\widetilde{\mathcal{G}}]_{\mathbf{p}_{\sigma_i}}\simeq
\mathbb{Z}[L_{i_1},\ldots, L_{i_n}]$. Since
${\rho_{\mathbf{p}_{\sigma_i}} (x_{\mu_i})}$ is the monomial
$L_{i_{j_1}},\ldots, L_{i_{j_p}}$, where
$\mu_i=\langle v_{i_{j_1}},\ldots, v_{i_{j_p}} \rangle$, and hence a
non-zero element of $\mathbb{Z}[L_{i_1},\ldots, L_{i_n}]$, we get that
$\rho_{\mathbf{p}_{\sigma_i}} (a_i)=0$. Moreover, $\rho$ can be seen
to be the diagonal embedding when restricted to the subalgebra
$H^*(BT^n)$ of $\mathbb{Z}[\widetilde{\mathcal{G}}]$ ($u\in H^*(BT^n)$
is equal to
$\displaystyle\sum_{j=1}^m \langle u,\lambda({L_j})\rangle\cdot L_j\in
\mathbb{Z}[\widetilde{\mathcal{G}}]$ maps to
$\displaystyle\sum_{j=1}^n \langle u,\lambda({L_{i_j}}) \rangle\cdot
L_{i_j}\in \mathbb{Z}[L_{i_1},\ldots, L_{i_n}]$ which is identified
with
$\displaystyle\sum_{j=1}^n \langle u,\lambda({L_{i_j}}) \rangle\cdot
\tau_{L_{i_j}} $ in $H^*(BT^n)$ (see Lemma \ref{restricted-to-pt})
which is equal to $u$ see (\ref{eu})) and $\rho$ is injective by
Lemma \ref{rho_inje}, which implies that $a_i=0$. This contradicts our
original assumption that $a_i\neq 0$. Thus we cannot have a relation
of the type (\ref{linearrelation}) in
$\mathbb{Z}[\widetilde{\mathcal{G}}]$ unless $a_i=0$ in $H^*(BT^n)$
for each $1\leq i\leq d$. Hence we conclude that $x_{\mu_i}$ for
$1\leq i\leq d$ are linearly independent in
$\mathbb{Z}[\widetilde{\mathcal{G}}]$. This proves (iii).

(iv) As for the proof of linear independence of $x_{\mu_i}$
$1\leq i\leq d$ our idea is to again use the localization map
$\displaystyle\rho=(\rho_{\mathbf{p}_{\sigma_i}}):
\mathbb{Z}[\widetilde{\mathcal{G}}]\longrightarrow
\bigoplus_{i=1}^{d}\mathbb{Z}[\widetilde{\mathcal{G}}]_{\mathbf{p}_{\sigma_i}}
(\simeq \bigoplus_{i=1}^{d} H^*(BT^n))$ defined in Section
\ref{sect:5.3}. We know that $\rho$ is injective and by (\ref{star})
and the definition of $\rho_{\mathbf{p}_{\sigma_i}}$ we have
$\rho_{\mathbf{p}_{\sigma_i}}(x_{\mu_j})=0$ for $j>i$. Also $\rho$ is the
diagonal map when restricted to the subalgebra $H^*(BT^n)$ of
$\mathbb{Z}[\widetilde{\mathcal{G}}]$. In particular, this implies that
$\rho_{\mathbf{p}_{\sigma_j}}(a_k)=a_k$ for $1\leq j,k\leq d$. 

Applying $\rho_{\mathbf{p}_{\sigma_i}}$ on (\ref{coefficients}) we get
\[\rho_{\mathbf{p}_{\sigma_i}}
  (f)=\rho_{\mathbf{p}_{\sigma_i}}(\sum_{j=1}^da_j\cdot
  x_{\mu_j})=\rho_{\mathbf{p}_{\sigma_i}}(\sum_{j\geq i}a_j\cdot x_{\mu_j}) = a_i\cdot
  \rho_{\mathbf{p}_{\sigma_i}}(x_{\mu_i})\] in the unique
factorization domain $\mathbb{Z}[L_{i_1},\ldots, L_{i_n}]$ where
$\mathbf{p}_{\sigma_i}=L_{i_1}\cap\cdots \cap L_{i_n}$. Thus
$\rho_{\mathbf{p}_{\sigma_i}}(f)$ is divisible by the irreducible
elements $L_{i_{j_1}},\ldots, L_{i_{j_p}}$ and hence by the monomial
$\rho_{\mathbf{p}_{\sigma_i}}(x_{\mu_i})=L_{i_{j_1}}\cdots L_{i_{j_p}}$ in
$\mathbb{Z}[L_{i_1},\ldots, L_{i_n}]$. Thus
$a_i=\frac{\rho_{\mathbf{p}_{\sigma_i}}(f)}{\rho_{\mathbf{p}_{\sigma_i}}(x_{\mu_i})}\in
\mathbb{Z}[L_{i_1},\ldots, L_{i_n}]$. Now, let
\[f_1:=f-a_{i(f)}\cdot x_{\mu_{i(f)}}\in
\mathbb{Z}[\widetilde{\mathcal{G}}].\] Then
$\displaystyle f_1=\sum_{j>i(f)}a_j \cdot x_{\mu_j} .$ Moreover, now
putting $i=i(f_1)$ and repeating the above argument given for
determining $a_{i(f)}$ we get
\[a_{i(f_1)}=\frac{\rho_{\mathbf {p}_{\sigma_{i(f_1)}}}(
    f_1)}{\rho_{\mathbf {p}_{\sigma_{i(f_1)} }}(x_{\mu_{i(f_1)}})}\] in
$\mathbb{Z}[\widetilde{\mathcal{G}}]_{\mathbf {p}_{\sigma_{i(f_1)}}}\simeq H^*(BT^n)$.
Proceeding similarly after $k$ steps we get
\[\displaystyle f_{k}=f-\sum_{i(f_{1})\le j<i(f_{k})} a_j\cdot
x_{\mu_j}=\sum_{j>i(f_{k-1})}a_j\cdot x_{\mu_j}.\] Putting $i=i(f_k)$
to be the smallest index in $\{i(f_{k-1}), i(f_{k-1})+1,\ldots, d\}$ such that $a_{i(f_k)}\neq
0$ and following similar arguments as above we get
\[a_{i(f_k)}=\frac{\rho_{\mathbf{p}_{\sigma_{i(f_k)}}}(
    f_k)}{\rho_{\mathbf {p}_{\sigma_{i(f_k)} }}(x_{\mu_{i(f_k)}})}\]  in
$\mathbb{Z}[\widetilde{\mathcal{G}}]_{\mathbf {p}_{\sigma_{i(f_k)}}}\simeq H^*(BT^n)$. This
proves (iv).

\end{proof}

\subsection{The ordinary cohomology ring of $\widetilde{\mathcal{G}}$ and an example}
\label{sect:7.4}

In geometry, if the equivariant cohomology $H_{T}^{*}(M;\mathbb{Z})$
has the structure of a free $H^{*}(BT)$-algebra, then we can compute
its ordinary cohomology $H^{*}(M;\mathbb{Z})$ by
$H_{T}^{*}(M;\mathbb{Z})\otimes_{H^*({BT})}\mathbb{Z}$.  By
Theorem~\ref{module-generators}, we know that
$H^{*}(\widetilde{\mathcal{G}})$ is a free $H^{*}(BT)$-algebra
of rank $d$.  So we may define the ``ordinary'' cohomology of
$\widetilde{\mathcal{G}}$ by
$H^*(\widetilde{\mathcal{G}})\otimes_{H^*({BT})}\mathbb{Z}$; we
denote it by $H^*_{{ord}}(\widetilde{\mathcal{G}})$.  The precise
computation of $H^*_{{ord}}(\widetilde{\mathcal{G}})$ is given as the
following corollary.
\begin{corollary}
\label{ord-cohomology}
(i) The following is the presentation
\[H_{ord}^*(\widetilde{\mathcal{G}})\simeq
  \frac{\mathbb{Z}[L_1,\ldots, L_m]}{\langle\prod_{L\in
      \mathbf{L}'}L\mid \mathbf{L}'\in \mathbf{I}(\mathbf{L}) ~;
    ~\sum_{i=1}^m\langle u, \lambda(L_{i})\rangle \cdot L_i ,~~\forall
    ~u\in H^2(BT^n)\rangle}\] for the ordinary cohomology ring
$H^*_{{ord}}(\widetilde{\mathcal{G}})$ as a $\mathbb{Z}$-algebra.

(ii) The monomials $x_{\mu_i}$, $1\leq i\leq d$ form a
$\mathbb{Z}$-basis for $H^*_{{ord}}(\widetilde{\mathcal{G}})$.

\end{corollary}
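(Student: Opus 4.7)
The plan is to deduce Corollary~\ref{ord-cohomology} directly from Theorem~\ref{module-generators} and Lemma~\ref{$H^*(BT^n)$-algebra}(ii), since by definition $H^*_{ord}(\widetilde{\mathcal{G}}) := H^*(\widetilde{\mathcal{G}})\otimes_{H^*(BT^n)} \mathbb{Z}$, where $\mathbb{Z}$ is regarded as an $H^*(BT^n)$-module via the augmentation $\varepsilon: H^*(BT^n)\to \mathbb{Z}$ sending $H^{>0}(BT^n)$ to $0$. Thus the whole statement is a base-change calculation: first rewrite the equivariant side as a presentation, then apply $-\otimes_{H^*(BT^n)} \mathbb{Z}$.

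For part (i), I would start from the presentation
\[
H^*(\widetilde{\mathcal{G}})\simeq \frac{H^*(BT^n)[L_1,\ldots,L_m]}{\langle \prod_{L\in \mathbf{L}'} L,\; \sum_{i=1}^m \langle u,\lambda(L_i)\rangle L_i - u \;\mid\; \mathbf{L}'\in \mathbf{I}(\mathbf{L}),\, u\in H^2(BT^n)\rangle}
\]
given by Lemma~\ref{$H^*(BT^n)$-algebra}(ii). Applying $-\otimes_{H^*(BT^n)}\mathbb{Z}$ commutes with taking quotients of $H^*(BT^n)$-algebras by ideals, so the tensor product is presented by the images of the generators and relations. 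On generators this replaces the coefficient ring $H^*(BT^n)$ by $\mathbb{Z}$ while keeping the variables $L_1,\ldots, L_m$. On the relations, the monomial relations $\prod_{L\in \mathbf{L}'} L$ persist unchanged (they lie in $\mathbb{Z}[L_1,\ldots,L_m]$), and for each $u\in H^2(BT^n)$ the relation $\sum_i \langle u,\lambda(L_i)\rangle L_i - u$ becomes $\sum_i \langle u,\lambda(L_i)\rangle L_i$ since $\varepsilon(u)=0$. This yields exactly the presentation claimed in (i).

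For part (ii), Theorem~\ref{module-generators}(iii) asserts that $\{x_{\mu_i}\}_{i=1}^d$ is a free $H^*(BT^n)$-basis of $\mathbb{Z}[\widetilde{\mathcal{G}}]\simeq H^*(\widetilde{\mathcal{G}})$. Consequently $H^*(\widetilde{\mathcal{G}})\simeq \bigoplus_{i=1}^d H^*(BT^n)\cdot x_{\mu_i}$ as $H^*(BT^n)$-modules. Applying $-\otimes_{H^*(BT^n)} \mathbb{Z}$ commutes with direct sums and sends each summand $H^*(BT^n)\cdot x_{\mu_i}$ to $\mathbb{Z}\cdot x_{\mu_i}$, giving $H^*_{ord}(\widetilde{\mathcal{G}})\simeq \bigoplus_{i=1}^d \mathbb{Z}\cdot x_{\mu_i}$ as $\mathbb{Z}$-modules. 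Hence $\{x_{\mu_i}\}_{i=1}^d$ is a $\mathbb{Z}$-basis of $H^*_{ord}(\widetilde{\mathcal{G}})$.

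There is no real obstacle here; the only point to be careful about is that the relations in Lemma~\ref{$H^*(BT^n)$-algebra}(ii) are indexed only by $u\in H^2(BT^n)$, but since $H^*(BT^n)$ is generated as a ring by $H^2(BT^n)$, the augmentation kernel is generated as an ideal by $H^2(BT^n)$, so these relations suffice to force all of $H^{>0}(BT^n)$ to zero after tensoring. Also worth remarking: the freeness in part (ii) makes the tensor product computation exact (no Tor issues), which is why the presentation obtained in (i) is already reduced and the basis statement in (ii) follows cleanly.
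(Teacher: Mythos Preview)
Your proof is correct and follows essentially the same approach as the paper: both derive (i) by tensoring the $H^*(BT^n)$-algebra presentation of Lemma~\ref{$H^*(BT^n)$-algebra}(ii) with $\mathbb{Z}$ over the augmentation, and both derive (ii) from the free $H^*(BT^n)$-basis of Theorem~\ref{module-generators}(iii). Your write-up is in fact more detailed than the paper's, particularly in noting that freeness avoids Tor obstructions and that the $H^2(BT^n)$-indexed relations suffice to kill the augmentation ideal.
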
  
\begin{proof} 
  (i) The $H^{*}(BT^{n})$-algebra structure on
  $H^*(\widetilde{\mathcal{G}})$ is given by Lemma
  \ref{$H^*(BT^n)$-algebra} and $\mathbb{Z}$ has $H^*({BT})$-algebra
  structure given by augmentation which sends each $u\in H^2(BT^n)$ to
  $0$.  Since $H^{*}(\widetilde{\mathcal{G}})$ is
    free, the corollary now follows from Lemma
  \ref{$H^*(BT^n)$-algebra} due to the $H^*({BT})$-algebra
  isomorphism $\Psi'$ of $\mathbb{Z}[\widetilde{\mathcal{G}}]$ with
  $H^*(\widetilde{\mathcal{G}})$.

  (ii) This follows by Theorem \ref{module-generators} (iii) and by
  the isomorphism
  \[H_{ord}^*(\widetilde{\mathcal{G}})\simeq
  \mathbb{Z}[\widetilde{\mathcal{G}}]\otimes_{H^*(BT^n)} \mathbb{Z}.\]
\end{proof}


\subsection{$H^*(\widetilde{\mathcal{G}})$ for
  $\mathcal{G}$ induced from the $8$-dimensional toric
  hyperK{$\ddot{\mathrm{a}}$}hler manifold.}
\label{sect:7.5}

By using the fundamental theorem of toric
hyperK{$\ddot{\mathrm{a}}$}hler manifolds in \cite{BD}, the $8$
dimensional toric hyperK{$\ddot{\mathrm{a}}$}hler manifold $M$ is
completely classified up to equivariant diffeomorphism by the
hyperplane arrangement $\mathcal{L}_{k,l,m}$ in $\mathbb{R}^{2}$
consisting of $k$ horizontal lines $\{Hor_1,\ldots, Hor_k\}$ which is
ordered from the bottom, $l$ virtical lines $\{Vir_1,\ldots, Vir_l\}$
which is ordered from the left and $m$ diagonal lines
$\{Dia_1,\ldots, Dia_m\}$ which is ordered from the left in $\mathbb{R}^2$
(also see Figure~\ref{Figure_shelling}).

It is easy to check that every set of hyperplanes in
$\mathcal{L}_{k,l,m}$ have the non-empty intersections except the
following cases:
\begin{align*}
& Hor_r\cap Vir_s\cap Dia_t=\emptyset \quad \text{for}\ 1\leq r\leq k, 1\leq s\leq l\ \text{and}\ 1\leq t\leq m;
\end{align*}
and 
\begin{align*}
& Hor_i\cap Hor_j=\emptyset\quad \text{for}\ 1\leq i,j\leq k; \\
& Vir_r\cap Vir_s=\emptyset\quad \text{for}\ 1\leq r,s\leq l; \\
& Dia_p\cap Dia_q=\emptyset\quad \text{for}\ 1\leq p,q\leq m.
\end{align*}
This hyperplane arrangement $\mathcal{L}_{k,l,m}$ induces the
$T^{*}\mathbb{C}^{2}$-modeled GKM graph $\mathcal{G}$.  We can see
that the characteristic functions associated to the hyperplanes are
given by $\lambda({Hor_r})=e_1$ for all $1\leq r\leq k$,
$\lambda(Vir_s)=e_2$ for all $1\leq s\leq l$ and $\lambda(Dia_p)=-e_1-e_2$
for all $1\leq p\leq m$ where
$H_2(BT^2)=\mathbb{Z}\cdot e_1\bigoplus \mathbb{Z}\cdot e_2$.
Therefore, the $x$-forgetful graph
  $\widetilde{\mathcal{G}}$ is
  given by Figure~\ref{Figure_example}.
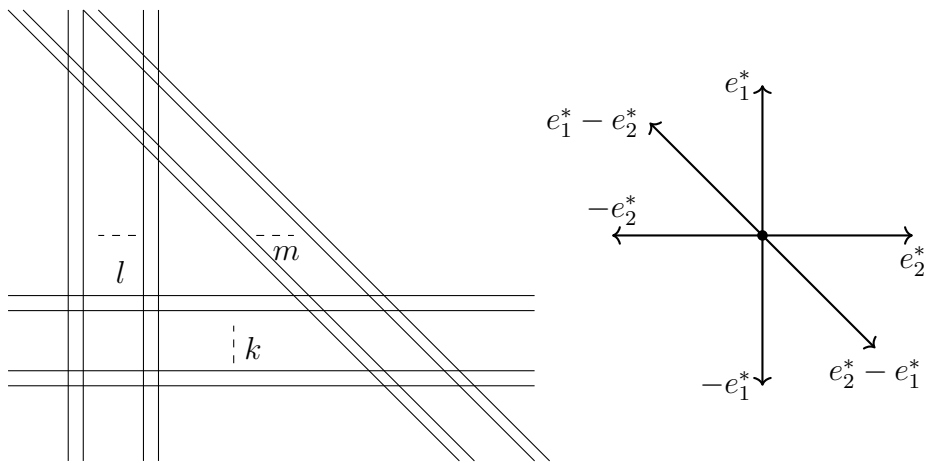
\begin{figure}[h]
\begin{tikzpicture}
\begin{scope}[xscale=1.0, yscale=1.0]


\draw (-3,3)--(3,-3);
\draw (-2.8,3)--(3.2,-3);
\draw[dashed] (0.3,0)--(0.8,0);
\node[below] at (0.7,0) {$m$};
\draw (-2,3)--(4,-3);
\draw (-1.8,3)--(4.2,-3);

\draw (-1,3)--(-1,-3);
\draw (-1.2,3)--(-1.2,-3);
\draw[dashed] (-1.3,0)--(-1.8,0);
\node[below] at (-1.5,-0.2) {$l$};
\draw (-2,3)--(-2,-3);
\draw (-2.2,3)--(-2.2,-3);

\draw (-3,-1.8)--(4,-1.8);
\draw (-3,-2)--(4,-2);
\draw[dashed] (0,-1.7)--(0,-1.2);
\node[right] at (0,-1.5) {$k$};
\draw (-3,-1)--(4,-1);
\draw (-3,-0.8)--(4,-0.8);

\end{scope}

\begin{scope}[xshift=200, xscale=1.0, yscale=1.0]
\fill (0,0) coordinate (o) circle (2pt);
\draw[->, thick] (o)--(2,0);
\node[below] at (2,0) {$e_{2}^{*}$};
\draw[->, thick] (o)--(-2,0);
\node[above] at (-2,0) {$-e_{2}^{*}$};
\draw[->, thick] (o)--(1.5,-1.5);
\node[below] at (1.5,-1.5) {$e_{2}^{*}-e_{1}^{*}$};
\draw[->, thick] (o)--(-1.5,1.5);
\node[left] at (-1.5,1.5) {$e_{1}^{*}-e_{2}^{*}$};
\draw[->, thick] (o)--(0,2);
\node[left] at (0,2) {$e_{1}^{*}$};
\draw[->, thick] (o)--(0,-2);
\node[left] at (0,-2) {$-e_{1}^{*}$};
\end{scope}
\end{tikzpicture}
\caption{The $x$-forgetful graph induced from $\mathcal{L}_{k,l,m}$. The axial functions on four edges around each vertex (each intersection of two lines) are defined by choosing the labels in the right figure for each direction, where $\{ e_{1}^{*},e_{2}^{*}\}\subset \algt^{*}_{\mathbb{Z}}$ is the dual basis of $\{e_{1},e_{2}\}\subset \algt_{\mathbb{Z}}$. 
For example, Figure~\ref{fig-x-forget} is the $x$-forgetfull graph of $\mathcal{L}_{1,1,1}$.}
\label{Figure_example}
\end{figure}

Consider the polynomial ring
$$R:=\mathbb{Z}[X_1,\ldots, X_k, Y_1,\ldots, Y_l, Z_1,\ldots, Z_m]$$
  in $k+l+m$ variables. 
Let $I$ be the ideal in $R$ generated by the
following monomials:
\begin{align*}
  & X_{r}Y_{s}Z_{t}\quad \text{for}\ 1\leq r\leq k,\ 1\leq s\leq l\
    \text{and}\ 1\leq t\leq m; \\
  & X_iX_j\quad \text{for}\ 1\leq i\neq j\leq k; \\
  & Y_rY_s\quad \text{for}\  1\leq r\neq s\leq l; \\
  & Z_pZ_q\quad \text{for}\ 1\leq p\neq q\leq m.
\end{align*}

It follows from Theorem \ref{main-theorem1-1} that $R/I$ is isomorphic to
  $H^*(\widetilde{\mathcal{G}})$ under the map
  which is defined by the following correspondences:
\begin{align*}
& X_r\mapsto \tau_{Hor_r}\quad \text{for}\ 1\leq r\leq k; \\
& Y_s\mapsto \tau_{Vir_s}\quad \text{for}\  1\leq s\leq l; \\
& Z_t\mapsto \tau_{Dia_t}\quad \text{for}\ 1\leq t\leq m.
\end{align*}

Now we determine the structure of $H^*(\widetilde{\mathcal{G}})$ as an
$H^*(BT^2)$-algebra.

Let
$u\in H^2(BT^2)=\mathbb{Z}\cdot e_1^*\bigoplus \mathbb{Z}\cdot e_2^*$
and $u=a\cdot e_1^*+b\cdot e_2^*$.  Then under the $H^*(BT^2)$-algebra
structure on $\mathbb{Z}[\widetilde{\mathcal{G}}]$, $u$ corresponds to
the element
\begin{align*}&\sum_{r=1}^ka\cdot X_r+\sum_{s=1}^lb\cdot
  Y_s-\sum_{t=1}^m(a+b) \cdot Z_t\\ &=a\cdot
  (X_1+\cdots+X_k-Z_1-\cdots-Z_m)+b\cdot
  (Y_1+\cdots+Y_l-Z_1-\cdots-Z_m).\end{align*} Let
$\mathcal{R}:=H^*(BT^2)[X_1,\ldots, X_k, Y_1,\ldots, Y_l, Z_1,\ldots,
Z_m]$ and $\mathcal{I}$ be the ideal in $\mathcal{R}$ generated by the
monomials generating the ideal $I$ in $R$, together with the following
two linear polynomials:
\begin{align*}
X_1+\cdots+X_k-Z_1-\cdots-Z_m-e_1^*; \quad
Y_1+\cdots+Y_l-Z_1-\cdots-Z_m-e_2^*.
\end{align*}
Then it follows from Lemma \ref{$H^*(BT^n)$-algebra} that the ring
$\mathcal{R}/\mathcal{I}$ is isomorphic to $H^{*}(\widetilde{\mathcal{G}})$
as an $H^*(BT^2)$-algebra.

We now note that the simplicial complex $\Delta_{\mathcal{L}}$ dual to
the hyperplane arrangement $\mathcal{L}$ has vertices
$u_1,\ldots, u_k$ corresponding to the hyperplanes $Hor_1,\ldots, Hor_k$,
$v_1,\ldots, v_l$ corresponding to the hyperplanes $Vir_1,\ldots, Vir_l$
and $w_1,\ldots, w_m$ corresponding to the hyperplanes
$Dia_1,\ldots Dia_m$.

  Moreover, $\Delta_{\mathcal{L}}$ is a $1$-dimensional simplicial
  complex where the number of $1$-simplices, i.e., the vertices of the $x$-forgetful graph, in $\Delta_{\mathcal{L}}$ is
  $kl+km+lm$. We can see that $\Delta_{\mathcal{L}}$ is shellable with the
  following shelling order of the $1$-dimensional simplices:
\begin{align*}
 &\hspace{0.5cm}\sigma_1=[u_1,v_1]< \sigma_2=[u_1,v_2]<\cdots<\sigma_l=[u_1,v_l]\\
   & < \sigma_{l+1}=[u_1,w_1]<\sigma_{l+2}=[u_1,w_2]<\cdots< \sigma_{l+m}=[u_1,w_m]\\
   & <\sigma_{l+m+1}=[u_2,v_1]<\sigma_{l+m+2}=[u_2,v_2]<\cdots<\sigma_{2l+m}=[u_2,v_l]\\
   &<\sigma_{2l+m+1}=[u_2,w_1]<\sigma_{2l+m+2}=[u_2,w_2]<\cdots<\sigma_{2l+2m}=[u_2,w_m]\\
   &\hspace{6cm}\vdots\\
   &<\sigma_{(k-1)\cdot l+(k-1)\cdot m+1}=[u_k,v_1]<
     \sigma_{(k-1)\cdot l+(k-1)\cdot
     m+2}=[u_k,v_2]<\cdots<\sigma_{kl+(k-1)\cdot m}=[u_k,v_l]\\ 
   &<\sigma_{kl+(k-1)\cdot m+1}=[u_k,w_1]<\sigma_{kl+(k-1)\cdot
     m+2}=[u_k,w_2]<\cdots<\sigma_{kl+km}=[u_k,w_m]\\ & < \sigma_{kl+km+1}=
                                                                                               [v_1,w_1]<\sigma_{kl+km+2}=[v_1,w_2]<\cdots<\sigma_{kl+km+m}=[v_1,w_m]\\ &\hspace{6cm}\vdots\\
   & <\sigma_{kl+km+(l-1)\cdot m
     +1}=[v_l,w_1]<\sigma_{kl+km+(l-1)\cdot m+2}=[v_l,w_2]<\cdots<\sigma_{kl+km+lm}=[v_l,w_m].
  \end{align*}

For example, we give the order on vertices in $\widetilde{\mathcal{G}}$ induced from $\mathcal{L}_{2,1,2}$, i.e., $1$-simplicies in $\Delta_{\mathcal{L}}$ as in Figure~\ref{Figure_shelling}.

  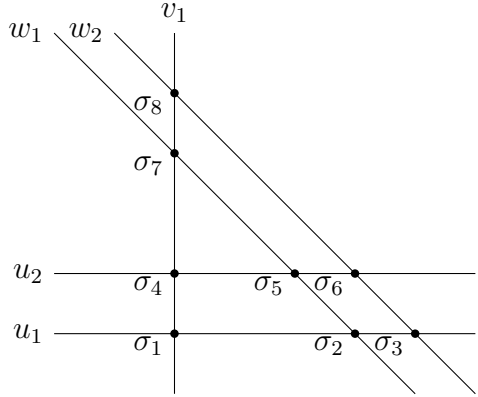
\begin{figure}[h]
\begin{tikzpicture}
\begin{scope}[xscale=0.8, yscale=0.8]

\draw (-3,3)--(3,-3);
\node[left] at (-3,3) {$w_{1}$};
\draw (-2,3)--(4,-3);
\node[left] at (-2,3) {$w_{2}$};

\draw (-1,3)--(-1,-3);
\node[above] at (-1,3) {$v_{1}$};

\draw (-3,-2)--(4,-2);
\node[left] at (-3,-2) {$u_{1}$};
\draw (-3,-1)--(4,-1);
\node[left] at (-3,-1) {$u_{2}$};

\fill (-1,-2) circle (2pt);
\node[left] at (-1,-2.2) {$\sigma_{1}$};
\fill (2,-2) circle (2pt); 
\node[left] at (2,-2.2) {$\sigma_{2}$};
\fill (3,-2) circle (2pt); 
\node[left] at (3,-2.2) {$\sigma_{3}$};

\fill (-1,-1) circle (2pt);
\node[left] at (-1,-1.2) {$\sigma_{4}$};
\fill (1,-1) circle (2pt);
\node[left] at (1,-1.2) {$\sigma_{5}$};
\fill (2,-1) circle (2pt);
\node[left] at (2,-1.2) {$\sigma_{6}$};

\fill (-1,1) circle (2pt);
\node[left] at (-1,0.8) {$\sigma_{7}$};

\fill (-1,2) circle (2pt);
\node[left] at (-1,1.8) {$\sigma_{8}$};
\end{scope}
\end{tikzpicture}
\caption{Ordering the vertices of the $x$-forgetful graph induced from $\mathcal{L}_{2,1,2}$.
This is equivalent to choose shelling of $\Delta_{\mathcal{L}}$.}
\label{Figure_shelling}
\end{figure}

In order to find the module generators of $H^{*}(\widetilde{\mathcal{G}})$, it is enough to find the minimal vertices of $\Delta_{i}\setminus \Delta_{i-1}$, where $\Delta_{i}$ is the subcomplex generated by $\sigma_{1},\ldots, \sigma_{i}$ in $\Delta_{\mathcal{L}}$.
For example, as a set $\Delta_{1}=[u_{1},v_{1}]=\{u_{1},v_{1},\sigma_{1}\}$ 
and $\Delta_{2}=[u_{1},v_{1}]\cup [u_{1},v_{2}]=\{u_{1},v_{1},v_{2},\sigma_{1},\sigma_{2}\}$;
therefore, $\Delta_{2}\setminus \Delta_{1}=\{v_{2}, \sigma_{2}\}$ such that $v_{2}\prec \sigma_{2}$ and the minimal face is $\mu_{2}:=v_{2}$.
Similarly, we obtain the following shelling:
\begin{align*}&\mu_1=\emptyset,
    \mu_2=\{v_2\}, \mu_3=\{v_3\},\ldots, \mu_l=\{ v_l\}, \\ &
    \mu_{l+1}=\{w_1\}, \mu_{l+2}=\{ w_2\},\ldots,
                                                                \mu_{l+m}=\{w_m\},\\
                                               & \mu_{l+m+1}=\{u_2\},
                                                 \mu_{l+m+2}=[u_2,v_2]\ldots,
                                                 \mu_{2l+m}=[u_2,v_l],\\
                                               &
                                                 \mu_{2l+m+1}=[u_2,w_1],\mu_{2l+m+2}=[u_2,w_2]
                                                 \ldots,
                                                 \mu_{2l+2m}=[u_2,w_m],\\
                                               &
                                                 \mu_{2l+2m+1}=\{u_3\},\mu_{2l+2m+2}=[u_3,v_2],\ldots,
                                                 \mu_{3l+2m}=[u_3,v_l],\\
                                               &\mu_{3l+2m+1}=[u_3,
                                                 w_1],
                                                 \mu_{3l+2m+2}=[u_3,
                                                 w_2],\ldots,
                                                 \mu_{3l+3m}=[u_3,
                                                 w_m],\\ &\hspace{6cm}
                                                          \vdots\\ &
                                                                     \mu_{(k-1)\cdot
                                                                     l+(k-1)\cdot
                                                                     m+1
                                                                     }=\{u_k\},
                                                                     \mu_{(k-1)\cdot
                                                                     l+(k-1)\cdot
                                                                     m+2}=[u_k,v_2],\ldots,
                                                                     \mu_{kl+(k-1)\cdot
                                                                     m}=[u_k,v_l],\\
                                               & \mu_{kl+(k-1)\cdot m
                                                 +1}=[u_{k},w_1],\mu_{kl+(k-1)\cdot
                                                 m+2}=[u_{k},w_2]\ldots,\mu_{kl+km}=[u_{k},w_{m}]\\
                                               &
                                                 \mu_{kl+km+1}=[v_1,w_1],
                                                 \mu_{kl+km+2}=[v_1,w_2],
                                                 \ldots,\mu_{kl+km+m
                                                 }=[v_1,w_m],\\ &
                                                                 \hspace{6cm}
                                                                 \vdots\\
                                               &
                                                 \mu_{kl+km+(l-1)\cdot
                                                 m+1 }=[v_l,
                                                 w_1],\mu_{kl+km+(l-1)\cdot
                                                 m+2}=[v_l,w_2],\ldots,
                                                 \mu_{kl+km+lm}=[v_l,w_m].
                                 \end{align*}

                                 By Theorem \ref{module-generators} the
                                 monomial basis for
                                 $\mathcal{R}/\mathcal{I}$ as a
                                 $H^*(BT^2)$-module is as follows
\begin{align*}
&1, Y_2,\ldots, Y_l, Z_1,\ldots, Z_m, X_2,\ldots, X_k, \\ &
                                                            X_2Y_2,\ldots,
                                                                  X_2Y_l,
                                                            X_2Z_1,\ldots,
  X_2Z_m,\\ &\hspace{3cm}\vdots\\ & X_kY_2,\ldots, X_kY_l,
                                    X_kZ_1,\ldots, X_kZ_m, \\ &
                                                               Y_1Z_1,\ldots,
                                                               Y_1Z_m,\ldots, Y_lZ_1,\ldots, Y_lZ_m
\end{align*}

For example, in the case when $k=2$, $l=1$ and $m=2$, the equivariant
cohomology ring $H^{*}(\widetilde{\mathcal{G}})$ of
$\widetilde{\mathcal{G}}$ in Figure~\ref{Figure_shelling} is
isomorphic as $H^{*}(BT^{2})$-algebra to
\[
\frac{H^{*}(BT^{2})[X_{1},X_{2},Y_{1},Z_{1},Z_{2}]}{\langle X_{1}X_{2},
  Z_{1}Z_{2}, X_{1}Y_{1}Z_{1}, X_{1}Y_{1}Z_{2}, X_{2}Y_{1}Z_{1},
  X_{2}Y_{1}Z_{2} ; X_1+X_{2}-Z_1-Z_2-e_1^*,
 Y_1-Z_1-Z_2-e_2^*.\rangle}\]

The shelling order of
$\Delta_{\mathcal{L}_{2,1,2}}$ is given by

\begin{align*}
 &\sigma_1=[u_1,v_1]<\sigma_2=[u_1,w_1]<\sigma_{3}=[u_1,
                                             w_2]<
                                                      \sigma_{4}=[u_2,v_1]\\
  & <\sigma_{5}=[u_2,w_1]< \sigma_{6}=[u_2,w_2] <\sigma_{7}=[v_1,w_1]<\sigma_{8}=[v_1,w_2].\end{align*}

Here we have 

\begin{align*}
  &\mu_{1}=\emptyset,\ 
                         \mu_{2}=\{w_1\},\ \mu_{3}=\{
                                             w_2\},\  
                                                      \mu_{4}=\{u_2\},\\
  & \mu_{5}=[u_2,w_1],\ \mu_{6}=[u_2,w_2], \mu_{7}=[v_1,w_1],\ 
                                                \mu_{8}=[v_1,w_2].\end{align*}

                                              From Theorem
                                              \ref{module-generators}(iii)
                                              we have the following
                                              basis of
                                              $H^*(\widetilde{\mathcal{G}})\simeq
                                              \mathbb{Z}[\widetilde{\mathcal{G}}]\simeq
                                              \mathcal{R}/\mathcal{I}$
                                              as a free
                                              $H^{*}(BT^{2})$-module:
\begin{equation}\label{basisexample}
x_{\mu_1}=1, x_{\mu_2}=Z_{1}, x_{\mu_3}=Z_{2}, x_{\mu_4}=X_{2}, x_{\mu_5}=X_{2}Z_{1}, x_{\mu_6}=X_{2}Z_{2}, x_{\mu_7}=Y_{1}Z_{1}, x_{\mu_8}=Y_{1}Z_{2}.\end{equation}

We shall now apply Theorem \ref{module-generators} (iv) to determine
some of the multiplicative structure constants of the basis
(\ref{basisexample}). We first consider $Z_1^2$. Let
$\displaystyle Z_1^2=\sum_{i=1}^8 a_i\cdot x_{\mu_i}$. Note first
that $\rho_{\mathbf{p}_{\sigma_i}}(Z_1^2)=0$ for $i=1,3,4,6,8$ so that
$a_1=a_3=a_4=a_6=a_8=0$.  We further see that
$\rho_{\mathbf{p}_{\sigma_2}}(Z_1^2)=Z_1^2$ in
$H_{T^2}^*(x_{\sigma_2})\simeq \mathbb{Z}[X_1,Z_1]$. Also
$x_{\mu_2}=Z_1$ and $\rho_{\mathbf{p}_{\sigma_2}}(Z_1)=Z_1$. Thus
$a_{2}=\frac{\rho_{\mathbf{p}_{\sigma_2}}(Z_1^2)}{\rho_{\mathbf{p}_{\sigma_1}}(Z_1)}=Z_1$
which corresponds to the element $-e_2^*$ under the isomorphism
$H_{T^2}^*(x_{\sigma_1})\simeq H^*(BT^2)$. Thus
$a_2=-e_2^*$. Proceeding as in Theorem \ref{module-generators}(iv) we
next consider $Z_1^2+e_2^*\cdot Z_1$. Using the relation
$e_2^*=Y_1-Z_1-Z_2$ and $Z_1\cdot Z_2=0$ in $\mathcal{R}/\mathcal{I}$
we get
$Z_1^2+e_2^*\cdot Z_1= Z_1^2+(Y_1-Z_1-Z_2)\cdot
Z_1=Y_1Z_1-Z_1Z_2=Y_1Z_1=x_{\mu_7}$. Thus
$Z_1^2=-e_2^*\cdot Z_1+Y_1Z_1=-e_2^*\cdot x_{\mu_2}+x_{\mu_7}$.

Next we consider $X_2^2$. If
$\displaystyle X_2^2=\sum_{i=1}^8 a_i\cdot x_{\mu_i}$ then
$\rho_{\mathbf{p}_{\sigma_i}}(X_2^2)=0$ for $i=1,2,3,7,8$. Thus by
Theorem \ref{module-generators}(iv) we get $a_i=0$ for $i=1,2,3,7,8$. To
find $a_4, a_5, a_6$ we first apply
$\rho_{\mathbf{p}_{\sigma_4}}(X_2^2)=X_2^2$ in
$H^*_{T^2}(x_{\sigma_4})=\mathbb{Z}[X_2, Y_1]$. Since $x_{\mu_4}=X_2$
we get
$a_4=\frac{\rho_{\mathbf{p}_{\sigma_4}}(X_2^2)}{\rho_{\mathbf{p}_{\sigma_4}}(X_2)}=X_2=e_1^*$
under the isomorphism $H^*_{T^2}(x_{\sigma_4})\simeq H^*(BT^2)$. We
then consider
$X_2^2-e_1^*\cdot X_2=X_2^2-(X_1+X_2-Z_1-Z_2)\cdot X_2=Z_1X_2+Z_2X_2$
using the relations $e_1^*=X_1+X_2-Z_1-Z_2$ and $X_1X_2=0$ in
$\mathcal{R}/\mathcal{I}$. This implies from Theorem
\ref{module-generators} that $a_5=a_6=1$ so that
$X_2^2=e_1^*\cdot x_{\mu_4}+x_{\mu_5}+x_{\mu_6}$.

Using similar arguments we have the following in the
$H^{*}(BT^{2})$-algebra, $\mathcal{R}/\mathcal{I}$:
\begin{align*}
& X_{2}^{2}=e_1^*\cdot X_2+1\cdot X_{2}Z_{1}+ 1\cdot X_{2}Z_{2}; \\
& X_{2}Z_{1}=1\cdot X_{2}Z_{1}; \\
& X_{2}Z_{2}=1\cdot X_{2}Z_{2}; \\
& Z_{1}^{2}=-e_2^*\cdot Z_{1}+1\cdot Y_{1}Z_{1}; \\
& Z_{1}Z_{2}=0; \\
& Z_{2}^{2}=-e_2^*\cdot Z_2+1\cdot Y_{1}Z_{2}.
\end{align*}

By Corollary \ref{ord-cohomology} the ordinary cohomology ring
$H^{*}_{ord}(\widetilde{\mathcal{G}})$ is isomorphic to
\[R'/I'\simeq \mathcal{R}/\mathcal{I}\otimes_{H^*(BT^2)}\mathbb{Z}\]
where $\mathbb{Z}$ is viewed as a
$H^*(BT^2)=\mathbb{Z}[e_1^*,e_2^*]$-module via the augmentation map
which sends $e_i^*$ to $0$ for $i=1,2$.  Hence
$R'=\mathbb{Z}[X_{1},X_{2},Y_{1},Z_{1},Z_{2}]$ and
\[I'=\langle X_{1}X_{2}, Z_{1}Z_{2}, X_{1}Y_{1}Z_{1}, X_{1}Y_{1}Z_{2},
  X_{2}Y_{1}Z_{1}, X_{2}Y_{1}Z_{2},
X_{1}+X_{2}-Z_{1}-Z_{2}, Y_{1}-Z_{1}-Z_{2} \rangle.\] By Corollary
\ref{ord-cohomology}(ii) and (\ref{basisexample}) we see that
$H^{*}_{ord}(\widetilde{\mathcal{G}})$ is isomorphic as a graded
$\mathbb{Z}$-module to
\begin{align*}
\mathbb{Z}\oplus \mathbb{Z}Z_{1}\oplus \mathbb{Z}Z_{2}\oplus \mathbb{Z}X_{2}\oplus \mathbb{Z}X_{2}Z_{1}\oplus \mathbb{Z}X_{2}Z_{2}\oplus\mathbb{Z}Y_{1}Z_{1}\oplus \mathbb{Z}Y_{1}Z_{2}.
\end{align*}
It therefore follows that the Euler characteristic of $M$ is
$kl+km+lm=8$ which is the number of elements in the monomial
basis. Also the rank of $H _{ord}^2(M;\mathbb{Z})$ is $k+l+m-2=3$
which are the number of monomials of degree $1$ in the basis. By Theorem
\ref{module-generators}, using the fact that every $u\in H^*(BT^2)$ is
equated to zero in the graded ring $R'/I'$ it follows that every
monomial of degree $r$ is a linear combination of the monomials
$x_{\mu_i}$ of degree greater than or equal to $r$ (see
\ref{e'2}). Since there are no monomials $x_{\mu_i}$ of degree
greater than or equal to $3$ we get that
$H^{2n}_{ord} (\mathcal{G})=0$ for all $n\geq 3$. This also implies in
particular that $H _{ord}^6(M;\mathbb{Z})=0$ and
$H _{ord}^8(M;\mathbb{Z})=0$. Using this fact or directly observing
that there are $4$ monomial basis elements of degree $2$ it follows
that $\mbox{rank}((H _{ord}^4(M;\mathbb{Z}))=kl+km+lm-(k+l+m-2)-1=4$.

Further, the multiplicative structure constants for the basis
\[1, Z_{1}, Z_{2}, X_{2}, X_{2}Z_{1}, X_{2}Z_{2}, Y_{1}Z_{1},
  Y_{1}Z_{2}\] of $R'/I'$ can be derived to be as follows:
\begin{align*}
& X_{2}^{2}=1\cdot X_{2}Z_{1}+ 1\cdot X_{2}Z_{2}; \\
& X_{2}Z_{1}=1\cdot X_{2}Z_{1}; \\
& X_{2}Z_{2}=1\cdot X_{2}Z_{2}; \\
& Z_{1}^{2}=1\cdot Y_{1}Z_{1}; \\
& Z_{1}Z_{2}=0; \\
& Z_{2}^{2}=1\cdot Y_{1}Z_{2}.
\end{align*}

More generally, we can compute the multiplicative structure constants
of the $\mathbb{Z}$-algebra
$H_{ord}^*(\widetilde{\mathcal{G}})\simeq R'/I'$ for the $x$-forgetful
graph $\widetilde{\mathcal{G}}$ induced from $\mathcal{L}_{k,l,m}$
with repect to the basis
\begin{align}
\label{8-dim_basis}
&1, Y_2,\ldots, Y_l, Z_1,\ldots, Z_m, X_2,\ldots, X_k, \nonumber \\ &
                                                            X_2Y_2,\ldots,
                                                                  X_2Y_l,
                                                            X_2Z_1,\ldots,
  X_2Z_m, \nonumber\\ &\hspace{3cm}\vdots\\ & X_kY_2,\ldots, X_kY_l,
                                    X_kZ_1,\ldots, X_kZ_m, \nonumber \\ &
                                                               Y_1Z_1,\ldots,
                                                               Y_1Z_m,\ldots, Y_lZ_1,\ldots, Y_lZ_m. \nonumber
\end{align}
Here again we note that the Euler characteristic of $M$ is $kl+km+lm$
which is the number of elements in the monomial basis. Also the rank
of $H _{ord}^2(M;\mathbb{Z})$ is $k+l+m-2$ which are the number of
monomials of degree $1$ in the basis. By Theorem
\ref{module-generators}, using the fact that every $u\in H^*(BT^2)$ is
equated to zero in the graded ring $R'/I'$ it follows that every
monomial of degree $r$ is a linear combination of the monomials
$x_{\mu_i}$ of degree greater than or equal to $r$ (see
\eqref{e'2}). Since there are no monomials $x_{\mu_i}$ of degree
greater than or equal to $3$ we get that
$H^{2n}_{ord} (\mathcal{G})=0$ for all $n\geq 3$. This implies in
particular that $H _{ord}^6(M;\mathbb{Z})=0$ and
$H _{ord}^8(M;\mathbb{Z})=0$. Thus it suffices to compute the
structure constants when we multiply two monomial basis elements of
degree $1$ which gives us a degree $2$ monomial. This can be done as
follows.

To compute the structure constants of $H_{ord}^{*}(\mathcal{L}_{k,l,m})$ with respect to the basis \eqref{8-dim_basis}, 
firstly we observe by the following steps given in Theorem
\ref{module-generators}(iv) (as explained in detail above for the case
when $k=2$ $l=1$ and $m=2$) that in $\mathcal{R}/\mathcal{I}$ we have
the following relations for $2\leq r\leq k$, $2\leq s\leq l$ and
$1\leq t\leq m$:
\begin{align*}
& X_{r}^{2}=e_1^*\cdot X_r+\sum_{t=1}^m1\cdot X_rZ_t; \\
& Y_{s}^2= e_2^*\cdot Y_s+\sum_{t=1}^m1\cdot Y_sZ_t;\\
& Z_{t}^{2}=-e_2^*\cdot Z_{t}+\sum_{s=1}^l 1\cdot Y_{s}Z_{t}. \\
\end{align*}
Other products of degree $1$ monomials in $\mathcal{R}/\mathcal{I}$
multiply to give square free monomials of degree $2$, $X_rY_s$ or
$X_rZ_t$ or $Y_sZ_t$ which are already part of the basis. Note also
that $X_rX_{r'}=0$, $Y_{s}Y_{s'}=0$ and $Z_{t}Z_{t'}=0$ for
$r\neq r'$, $s\neq s'$ and $t\neq t'$.

We therefore arrive at the following relations in
$R'/I' \simeq H_{ord}^*(\widetilde{\mathcal{G}})$:
\begin{align*}
& X_{r}^{2}=\sum_{t=1}^m1\cdot X_rZ_t; \\
& Y_{s}^2= \sum_{t=1}^m1\cdot Y_sZ_t;\\
  & Z_{t}^{2}=\sum_{s=1}^l 1\cdot Y_{s}Z_{t}; \\
  &  X_rY_s=1\cdot X_r Y_s;\\
  & X_rZ_t=1\cdot X_rZ_t; \\
  & Y_sZ_t=1\cdot Y_sZ_t,
\end{align*}
where $2\leq r\leq k$, $2\leq s\leq l$ and $1\leq t\leq m$.

We have the following corollary of Theorem \ref{module-generators} for
the hyperplane arrangements $\mathcal{L}_{k,l,m}$ classifying the
$8$-dimensional toric hyperK{$\ddot{\mathrm{a}}$}hler manifolds.
\begin{corollary}
The ordinary cohomology $H_{ord}^{*}(\mathcal{L}_{k,l,m})$ is isomorphic to a free $\mathbb{Z}$-module generated by the elements \eqref{8-dim_basis}.
Furthermore, all structure constants of their multiplications are $1$
except for the case when they are equal to $0$.
\end{corollary}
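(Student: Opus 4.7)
The first part, namely that the monomials in \eqref{8-dim_basis} form a free $\mathbb{Z}$-basis for $H_{ord}^*(\mathcal{L}_{k,l,m})$, is a direct application of machinery already established. The plan is to verify that the ordering $\sigma_1,\sigma_2,\ldots,\sigma_{kl+km+lm}$ of facets of $\Delta_{\mathcal{L}_{k,l,m}}$ displayed in Section \ref{sect:7.5} is a genuine shelling, by checking inductively that each $\Delta_i\setminus\Delta_{i-1}$ has the listed unique minimal face $\mu_i$. Once shellability is in hand, Theorem \ref{module-generators}(iii) supplies a free $H^*(BT^2)$-module basis $\{x_{\mu_i}\}$ for $\mathbb{Z}[\widetilde{\mathcal{G}}]\simeq H^*(\widetilde{\mathcal{G}})$, and Corollary \ref{ord-cohomology}(ii) turns this into the required $\mathbb{Z}$-basis for $H_{ord}^*(\widetilde{\mathcal{G}})\simeq R'/I'$.

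For the multiplicative part, the key reduction is by degree. Every element of \eqref{8-dim_basis} has degree at most $2$, and Theorem \ref{module-generators}(iv) together with the observation (following from \eqref{e'2} and the vanishing of $H^{>0}(BT^2)$ in $R'/I'$) that every product of degree $r$ lies in the $\mathbb{Z}$-span of basis monomials of degree $\geq r$, forces $H_{ord}^{2n}(\widetilde{\mathcal{G}})=0$ for $n\geq 3$. Thus only products of two degree-$1$ basis elements need explicit computation.

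These split into two families. First, the square-free products $X_rY_s$, $X_rZ_t$, $Y_sZ_t$ (in the ranges where the factors are basis elements) are themselves basis monomials, giving structure constant $1$; while products like $X_rX_{r'}$ ($r\neq r'$), $Y_sY_{s'}$ ($s\neq s'$), $Z_tZ_{t'}$ ($t\neq t'$) vanish by definition of $I'$ (they encode the empty intersections of parallel lines in $\mathcal{L}_{k,l,m}$). Second, for the squares, the plan is to use the linear relations $X_1+\cdots+X_k=Z_1+\cdots+Z_m$ and $Y_1+\cdots+Y_l=Z_1+\cdots+Z_m$ of $R'/I'$ (the images of the generators of $\mathcal{I}$ under $e_1^*,e_2^*\mapsto 0$). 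Substituting and using $X_iX_j=Y_iY_j=Z_iZ_j=0$ for $i\neq j$ gives
\[
X_r^2=X_r\Bigl(\sum_{t=1}^m Z_t-\sum_{r'\neq r}X_{r'}\Bigr)=\sum_{t=1}^m X_rZ_t,
\]
and analogously $Y_s^2=\sum_{t=1}^m Y_sZ_t$ and $Z_t^2=\sum_{s=1}^l Y_sZ_t$, each a sum of basis monomials with coefficient $1$.

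There is no serious obstacle: once shellability is confirmed, the remainder is elementary polynomial manipulation in $R'/I'$. The only mildly delicate point is the verification of the proposed shelling for arbitrary $(k,l,m)$, which amounts to a bookkeeping check that for each $i$ the new facet $\sigma_i$ meets $\Delta_{i-1}$ in a subcomplex with a unique minimal non-face $\mu_i$; this follows by induction on the block structure (horizontal--vertical, horizontal--diagonal, vertical--diagonal) of the enumeration.
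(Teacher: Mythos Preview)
Your proposal is correct and follows essentially the same approach as the paper: the corollary is stated as a summary of the computations carried out throughout Section~\ref{sect:7.5}, namely verifying the shelling, applying Theorem~\ref{module-generators}(iii) and Corollary~\ref{ord-cohomology}(ii) for the basis, reducing by degree to products of degree-$1$ elements, and then computing those products via the linear relations and the vanishing of parallel products. The only cosmetic difference is that the paper first carries out the square computations in $\mathcal{R}/\mathcal{I}$ using Theorem~\ref{module-generators}(iv) and then passes to $R'/I'$, whereas you work directly in $R'/I'$; the underlying manipulation is the same.
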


\section*{Acknowledgement}
The first author is grateful to Professors Megumi Harada and Mikiya Masuda for their invaluable advice and comments for the previous version of this paper \cite{Ku10}.

\bibliographystyle{amsalpha}

\end{document}